\newtheorem{theorem}{Theorem}
\newtheorem{lemma}{Lemma}
\title{Pattern Avoidance in Parking Functions}
\author{Ayomikun Adeniran\\
\small Department of Mathematics\\[-0.8ex]
\small Colby College\\[-0.8ex] 
\small Maine, U.S.A.\\
\small\tt aadenira@colby.edu\thanks{The first author is supported in part by an AMS-Simons Travel Grant.}
\and
Lara Pudwell\\
\small Department of Mathematics and Statistics\\[-0.8ex]
\small Valparaiso University\\[-0.8ex]
\small Indiana, U.S.A\\
\small\tt Lara.Pudwell@valpo.edu}
\date{\today}
\begin{document}

\maketitle

\begin{abstract}
In this paper, we view parking functions viewed as labeled Dyck paths in order to study a notion of pattern avoidance first considered by Remmel and Qiu.  In particular we enumerate the parking functions avoiding any set of two or more patterns of length 3, and we obtain a number of well-known combinatorial sequences as a result. Along the way, we find bijections between specific sets of pattern-avoiding parking functions and a number of combinatorial objects such as partitions of polygons and trees with certain restrictions.
\end{abstract}

\section{Introduction}

Let $\mathcal{S}_n$ be the set of all permutations on $[n]=\{1,2,\dots, n\}$. Given $\pi \in \mathcal{S}_n$ and $\rho \in \mathcal{S}_m$ we say that $\pi$ \emph{contains} $\rho$ as a pattern if there exist $1 \leq i_1 < i_2 < \cdots < i_m \leq n$ such that $\pi_{i_a} \leq \pi_{i_b}$ if and only if $\rho_a \leq \rho_b$.    In this case we say that $\pi_{i_1}\cdots \pi_{i_m}$ is \emph{order-isomorphic} to $\rho$, and that $\pi_{i_1} \cdots \pi_{i_m}$ is an \emph{occurrence} of $\rho$ in $\pi$.  If $\pi$ does not contain $\rho$, then we say that $\pi$ \emph{avoids} $\rho$.  For example $\pi=364521$ contains the pattern $\rho = 2341$ because the digits of $\pi_1\pi_3\pi_4\pi_5=3452$ have the same relative order as the digits of $\rho$; this is one of two instances of 2341 in $\pi$.  Simion and Schmidt \cite{SS85} enumerated permutations avoiding a pair of patterns of length 3 and many further enumerative results followed (see B\'{o}na \cite{B12} and Kitaev \cite{K11} for surveys) both for patterns in permutations and in words.  We now consider this notion of pattern applied to the area of parking functions.   

Suppose that $n$ cars need to park in $n$ parking spaces along a one-way street.  Each car has a favorite parking spot.  Each car will traverse the street exactly once and it will take its favorite spot or the next available spot after that.  A collection of preferences that results in all the cars being parked is called a parking function.  As extreme examples: if all cars prefer spot 1, then we have a parking function since the $i$th car will end up in spot $i$; however if all cars prefer a different spot, such as spot 2 or spot $n$, we do not have a parking function since all the cars will bypass spot 1 and there are not enough remaining spots to park the cars.  

It is well known that a collection of preferences is a parking function if and only if for all $1 \leq i \leq n$ at least $i$ cars prefer spot $i$ or earlier.  Further, it is known (see \cite{KW66, P59}) that there are $(n+1)^{n-1}$ possible parking functions for $n$ cars. For a comprehensive survey of results on parking functions, see $\cite{Y15}$.

There are multiple ways to represent parking functions that associate them with words and permutations.  For example, Jel\'{i}nek and Mansour \cite{JM09} view a parking function as a word $w \in [n]^n$ where for every $i= 1, \dots, n$, $w$ has at least $i$ letters less than or equal to $i$. In this representation, the three parking functions on 2 letters are represented as 11, 12, and 21. They apply the standard definition of pattern avoidance to these words and determine pairs of patterns $\pi$ and $\pi^\prime$ where the number of $\pi$-avoiding parking functions of size $n$ equals the number of $\pi^\prime$-avoiding parking functions of size $n$ for all $n$.

In a different direction, Garsia and Haiman  \cite{GH96, H08} represent parking functions as Dyck paths with labeled up-steps.  Here a Dyck path is a lattice path from $(0,0)$ to $(n,n)$ in the Cartesian plane consisting of $n$ north-steps (N) and $n$ east-steps (E) that never crosses below the line $y=x$.  Dyck paths are counted by the Catalan numbers, and there is a natural bijection with legally-arranged sequences of parentheses (by changing each N to a left parenthesis and each E to a right parenthesis).  In such a path, we may label each of the $n$ north-steps with a distinct integer from $\{1,\dots,n\}$ such that consecutive north-steps must have their labels in increasing order.  In this representation, the labels of north-steps along $x=i$ correspond to the cars who prefer spot $i+1$.  The three parking functions corresponding to 11, 12, and 21 in Jel\'{i}nek and Mansour's notation are shown as labeled Dyck paths in Figure \ref{fig:smallpf}.

\begin{figure}
    \centering
\begin{tikzpicture}
\draw[step=1.0,black,thin,xshift=0.5cm,yshift=0.5cm] (0,0) grid (2,2);
\draw[line width=0.2cm] (0.5,0.5) -- (0.5,2.5)--(2.5,2.5);
\node at (1,1) {1};
\node at (1,2) {2};
\end{tikzpicture}
\begin{tikzpicture}
\draw[step=1.0,black,thin,xshift=0.5cm,yshift=0.5cm] (0,0) grid (2,2);
\draw[line width=0.2cm] (0.5,0.5) -- (0.5,1.5)--(1.5,1.5)--(1.5,2.5)--(2.5,2.5);
\node at (1,1) {1};
\node at (2,2) {2};
\end{tikzpicture}
\begin{tikzpicture}
\draw[step=1.0,black,thin,xshift=0.5cm,yshift=0.5cm] (0,0) grid (2,2);
\draw[line width=0.2cm] (0.5,0.5) -- (0.5,1.5)--(1.5,1.5)--(1.5,2.5)--(2.5,2.5);
\node at (1,1) {2};
\node at (2,2) {1};
\end{tikzpicture}
    \caption{The three parking functions of size 2}
    \label{fig:smallpf}
\end{figure}

In Garsia and Haiman's notation, each parking function is represented uniquely by the combination of a Dyck path and the labels on its north-steps.  However, there is also an interesting many-to-one correspondence between permutations and parking functions that manifests naturally as a result of this representation.  In particular, given a parking function represented as a labeled Dyck path, reading the labels of the north-steps in order produces a unique permutation.  Multiple parking functions may correspond to the same permutation; for example, the first two parking functions in Figure \ref{fig:smallpf} correspond to the permutation 12, while the last one corresponds to the permutation 21.  However, this many-to-one correspondence gives another way to study pattern avoidance in parking functions:  consider a parking function $p$ represented as a labeled Dyck path.  Let $\phi(p)$ be the permutation obtained by reading the labels of $p$'s north steps in order.  Further, let $\mathrm{pf}_n(\rho)$ be the number of parking functions $p$ of size $n$ such that $\phi(p)$ avoids $\rho$, and by extension let $\mathrm{pf}_n(\rho_1,\rho_2,\dots, \rho_m)$ be the number of parking functions of size $n$ such that $\phi(p)$ avoids each of $\rho_1, \dots, \rho_m$ simultaneously.

It is quick to see that $$\mathrm{pf}_n(12)=1.$$  The only way to avoid the pattern 12 is to consider the Dyck path of the form $(NE)^n$ whose labels appear in decreasing order, as in the last image of Figure \ref{fig:smallpf}.  It is similarly quick to see that $$\mathrm{pf}_n(21)=C_n,$$ where $C_n=\frac{\binom{2n}{n}}{n+1}$ is the $n$th Catalan number.  Here, for any of the $C_n$ possible Dyck paths, the north-steps must be labeled in increasing order, as in the first two images of Figure \ref{fig:smallpf}.

Remmel and Qiu \cite{RQ17} considered the case of parking functions avoiding 123 and determined that
$$\mathrm{pf}_n(123)=\sum_{k=n/2}^{n}\frac{C_k}{n-k+1}\binom{n}{k}\binom{k}{n-k},$$
where $C_k$ is the $k$th Catalan number, but they did not conduct a more systematic exploration.

While Garsia and Haiman's representation can be visualized in terms of Dyck paths, for ease of notation, we may also write parking functions as ordered collections of sets, where the members of the $i$th set are the labels of the north-steps along $x=i-1$ in the Dyck representation. We separate sets by vertical lines.  In this \emph{block notation}, the three Dyck paths of Figure \ref{fig:smallpf} are written as $\{1,2\}\vert \emptyset$, $\{1\}\vert \{2\}$, and $\{2\}\vert \{1\}$ respectively.

In this paper, we continue Remmel and Qiu's study of pattern avoidance using Garcia and Haiman's labeled Dyck path representation of parking functions.  In particular, we study parking functions that avoid any collection of permutation patterns of length 3. We achieve a number of famous combinatorial sequences as well as a number of new sequences. The rest of this paper is organized according to the number of patterns to be avoided.  By the Erd\H{o}s-Szekeres Theorem, any permutation that avoids both 123 and 321 must have length 4 or less, so we only consider parking functions that avoid at most one of these patterns at a time.  While some results follow from case-work, other arguments require new bijections with combinatorial objects or computer-assisted induction.

Since the definition of pattern considered in this paper has its roots in permutation patterns, we will find it helpful to make use of certain permutation notation.  Throughout this paper, $\mathcal{S}_n$ denotes the set of all permutations of length $n$ and $\mathcal{S}_n(P)$ denotes the set of all permutations of length $n$ that avoid all patterns in the list $P$.  $I_n = 12\cdots n$ denotes the increasing permutation of length $n$ and $J_n=n\cdots 21$ denotes the decreasing permutation of length $n$.  Further, given permutations $\alpha \in \mathcal{S}_n$ and $\beta \in \mathcal{S}_m$, then $\alpha \oplus \beta$ and $\alpha \ominus \beta$ are permutations in $\mathcal{S}_{n+m}$ known respectively as the \emph{direct sum} and the \emph{skew sum} of $\alpha$ and $\beta$.  In particular, $$(\alpha \oplus \beta)_i=\begin{cases}\alpha_i&1 \leq i \leq n\\
\beta_{i-n}+n&n+1\leq i \leq n+m
\end{cases}$$ and  $$(\alpha \ominus \beta)_i=\begin{cases}\alpha_i+m&1 \leq i \leq n\\
\beta_{i-n}&n+1\leq i \leq n+m
\end{cases}.$$

\section{Avoiding five patterns}

Since we omit sets that avoid 123 and 321 simultaneously, there are only two sets of size 5 to consider.

\begin{theorem}
$$\mathrm{pf}_n(123, 132, 213, 231,312)=\begin{cases}3&n=2,\\
1&\text{otherwise.}
\end{cases}$$
\end{theorem}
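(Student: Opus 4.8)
The plan is to first identify which permutations lie in $\mathcal{S}_n(123,132,213,231,312)$, and only then count the parking functions that map to them under $\phi$.

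First I would observe that the six patterns of length $3$ are exactly $123,132,213,231,312,321$, so a permutation avoiding the five listed patterns is precisely one in which every length-$3$ subsequence is order-isomorphic to $321$. For $n\le 2$ there are no length-$3$ subsequences at all, so every element of $\mathcal{S}_n$ qualifies. For $n\ge 3$, requiring every three-term subsequence (in particular each consecutive triple $\pi_i\pi_{i+1}\pi_{i+2}$) to be decreasing forces $\pi_1>\pi_2>\cdots>\pi_n$, i.e. $\pi=J_n$. Hence $\mathcal{S}_n(123,132,213,231,312)=\mathcal{S}_n$ when $n\le 2$ and equals $\{J_n\}$ when $n\ge 3$.

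Next, for $n\ge 3$ I would count the parking functions $p$ with $\phi(p)=J_n$. The key point is the labeling rule: within any maximal run of consecutive north-steps (a single block in block notation) the labels increase from bottom to top. Consequently any block containing two or more north-steps contributes an ascent to $\phi(p)$, which is impossible when $\phi(p)=J_n$ is strictly decreasing. Therefore every block holds exactly one north-step, forcing the underlying Dyck path to be the staircase $(NE)^n$; reading its labels in the required decreasing order then pins them down uniquely as $n,(n-1),\dots,1$. So there is exactly one such parking function, giving the value $1$ for $n\ge 3$ (and the case $n=1$ is immediate, also yielding $1$).

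Finally, for $n=2$ every permutation is admissible, so I would simply count all parking functions of size $2$; since there are $(n+1)^{n-1}=3^1=3$ of them, the count is $3$. Assembling the three cases yields the claimed piecewise formula. None of the steps presents a serious obstacle; the only point demanding care is the boundary case $n=2$, where the absence of length-$3$ subsequences makes the whole symmetric group admissible and produces the exceptional value $3$.
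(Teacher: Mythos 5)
Your proposal is correct and follows essentially the same route as the paper's proof: for $n\ge 3$ the five avoided patterns force every length-$3$ subsequence to be a $321$, so the labels are strictly decreasing, each block has size one, and the only parking function is the staircase $(NE)^n$ with decreasing labels. Your explicit treatment of the boundary cases $n=1,2$ is a minor addition the paper leaves implicit, but the core argument is identical.
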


\begin{proof}
For $n \geq 3$, all subsequences of length 3 must form 321 patterns.  In other words, the labels of the north-steps must all be in decreasing order.  This implies that each north-step is on a different vertical line in the Cartesian plane, so the only relevant parking function is the one of the form $(NE)^n$ with decreasing labels.
\end{proof}

\begin{theorem}
$$\mathrm{pf}_n(132, 213, 231, 312, 321)=\begin{cases}3&n=2,\\
C_n&\text{otherwise.}
\end{cases}$$
\end{theorem}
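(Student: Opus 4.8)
The plan is to exploit the fact that avoiding the five patterns $\{132,213,231,312,321\}$ forces $\phi(p)$ to avoid every length-$3$ pattern except $123$. For $n\geq 3$, any permutation on $[n]$ whose every length-$3$ subsequence must be order-isomorphic to $123$ is forced to be the identity permutation $I_n$, since a single descent $\pi_i>\pi_{i+1}$ together with any third entry would produce one of the forbidden patterns. So the first step is to observe that $\phi(p)=I_n$ for every parking function $p$ counted here.

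Next I would translate this constraint back into the labeled Dyck path representation. The condition $\phi(p)=I_n$ says precisely that reading the north-step labels from bottom to top yields $1,2,\dots,n$ in increasing order; equivalently, each north-step on the path carries the label equal to its height (its position among the north-steps counted from the bottom). Crucially, this labeling is the \emph{unique} increasing labeling available to any given Dyck path, and it is automatically legal because consecutive north-steps always receive increasing labels. Therefore the map sending a parking function $p$ with $\phi(p)=I_n$ to its underlying Dyck path is a bijection onto the full set of Dyck paths of semilength $n$.

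Since Dyck paths of semilength $n$ are counted by the Catalan number $C_n$, this bijection immediately gives $\mathrm{pf}_n(132,213,231,312,321)=C_n$ for $n\geq 3$. Indeed this is exactly the same reasoning already used in the excerpt to establish $\mathrm{pf}_n(21)=C_n$: avoiding $21$ (equivalently, forcing increasing labels) over all Dyck paths yields $C_n$, and here the five-pattern condition collapses to the identical requirement once $n\geq 3$.

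The only remaining point is the base case $n=2$, which must be handled separately because the ``every length-$3$ subsequence'' argument is vacuous when $n<3$. For $n=2$ one simply checks directly: all three parking functions of size $2$ shown in Figure \ref{fig:smallpf} trivially avoid every length-$3$ pattern (having no subsequence of length $3$), so $\mathrm{pf}_2(132,213,231,312,321)=3$, matching the stated value. I expect no genuine obstacle here; the main subtlety is simply being careful that the forcing argument only applies for $n\geq 3$, which is exactly why the formula bifurcates at $n=2$, and noting that $C_2=2\neq 3$ confirms the base case genuinely needs its own line.
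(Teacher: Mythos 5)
Your proposal is correct and follows essentially the same route as the paper: for $n\geq 3$ every length-$3$ subsequence must form a $123$ pattern, forcing the labels to increase and hence allowing exactly one labeling of each of the $C_n$ Dyck paths, with the $n=2$ case checked separately. The paper states this more tersely, but the argument is the same.
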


\begin{proof}
For $n \geq 3$, all subsequences of length 3 must form 123 patterns.  In other words, the labels of the north-steps must all be in increasing order.  Each of the $C_n$ possible Dyck paths can be labeled once in this way.
\end{proof}

\section{Avoiding four patterns}

There are $\binom{6}{4}=15$ distinct sets of four patterns of length 3. However, 6 of these sets involve both 123 and 321 and are addressed by the Erd\H{o}s-Szekeres Theorem.  From the remaining 9 sets of patterns, we get five different enumeration sequences.  A summary of results is shown in Table \ref{tab:four_patterns}.

\begin{table}[hbt]
    \centering
    \begin{tabular}{|c|c|c|c|}
    \hline
    Patterns $P$&$\mathrm{pf}_n(P)$, $1 \leq n \leq 6$&OEIS&Result\\
\hline
123, 132, 213, 231 &\multirow{2}{*}{1, 3, 3, 3, 3, 3}&\multirow{2}{*}{A122553}&\multirow{2}{*}{Theorem \ref{T:fourd}}\\
123, 132, 231, 312 &&&\\
\hline

123, 132, 213, 312 &\multirow{2}{*}{1, 3, 4, 5, 6, 7}&\multirow{2}{*}{A065475}&\multirow{2}{*}{Theorem \ref{T:foure}}\\
123, 213, 231, 312 &&&\\
    \hline
132, 213, 231, 312&1, 3, 6, 15, 43, 133&new&Theorem \ref{T:foura}\\
\hline
132, 213, 231, 321&\multirow{2}{*}{1, 3, 7, 19, 56, 174}&\multirow{2}{*}{A071716}&\multirow{2}{*}{Theorem \ref{T:fourb}}\\
132, 231, 312, 321&&&\\
\hline
132, 213, 312, 321&\multirow{2}{*}{1, 3, 8, 23, 70, 222}&\multirow{2}{*}{A000782}&\multirow{2}{*}{Theorem \ref{T:fourc}}\\
213, 231, 312, 321&&&\\
\hline
    \end{tabular}
    \caption{\textit{Enumeration data for parking functions avoiding four patterns of length 3}}
    \label{tab:four_patterns}
\end{table}

\begin{theorem}
\label{T:fourd} For $n \geq 2$,
$$\mathrm{pf}_n(123, 132, 213, 231)=\mathrm{pf}_n(123, 132, 231, 312)=3.$$
\end{theorem}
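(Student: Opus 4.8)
The plan is to compute $\mathrm{pf}_n(P)$ for each of the two sets $P$ by summing over the permutations it forbids. Writing $\phi$ for the map sending a labeled Dyck path to the reading word of its north-step labels, we have $\mathrm{pf}_n(P)=\sum_{\sigma\in\mathcal{S}_n(P)}|\phi^{-1}(\sigma)|$, so the argument splits into two tasks: first, to identify the (very short) list of permutations $\sigma\in\mathcal{S}_n(P)$; and second, to count, for each such $\sigma$, the number of parking functions whose reading word is $\sigma$.

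For the pattern-avoidance step I would argue directly. For $P=\{123,132,213,231\}$ the forbidden patterns are exactly the length-$3$ permutations whose maximum does not come first, so $\sigma\in\mathcal{S}_n(P)$ iff every length-$3$ subword of $\sigma$ begins with its largest entry. A short argument (if $\sigma_1\neq n$, choose a third index to witness a forbidden triple) forces $\sigma_1=n$ when $n\geq3$, and iterating on the suffix forces $\sigma=n(n-1)\cdots3\,ab$ with $ab\in\{12,21\}$; hence $\mathcal{S}_n(P)=\{J_n,\;n(n-1)\cdots3\,1\,2\}$. For $P=\{123,132,231,312\}$ the allowed triples are $213$ and $321$, both of which have their first entry larger than their second; this forces $\sigma_1>\sigma_2>\cdots>\sigma_{n-1}$, and then examining the triples ending at position $n$ shows $\sigma_n$ may not lie strictly between two earlier values (that would create a $312$) and so must be the global maximum or minimum. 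Thus $\mathcal{S}_n(P)=\{J_n,\;(n-1)(n-2)\cdots1\,n\}$. In both cases, for $n\geq3$ there are exactly two avoiders: $J_n$ together with a permutation $\sigma^\ast$ that is strictly decreasing except for a single ascent formed by its last two entries.

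For the fiber step I would use that a parking function with reading word $\sigma$ amounts to a grouping of the entries of $\sigma$ into the columns $x=0,1,\dots,n-1$: the entries in a column are consecutive in $\sigma$ and must be increasing, and the column sizes $c_0,\dots,c_{n-1}$ must satisfy the parking condition $c_0+\cdots+c_i\geq i+1$ for all $i$. Consequently every descent of $\sigma$ forces a column break. For $J_n$ every adjacent pair is a descent, so each column holds at most one label, forcing $c_i=1$ for all $i$ and giving $|\phi^{-1}(J_n)|=1$. For $\sigma^\ast$ the only optional break is at its unique terminal ascent: either we split there, obtaining the all-singletons path, or we keep the last two labels in one column, which leaves exactly one empty column. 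The key point is that the parking condition pins this empty column to a unique location—any empty column placed before the (necessarily last) size-$2$ block makes the corresponding partial sum fall short of $i+1$—so this case contributes exactly one more path. Hence $|\phi^{-1}(\sigma^\ast)|=2$ and $\mathrm{pf}_n(P)=1+2=3$ for $n\geq3$; the case $n=2$ is handled separately, where the length-$3$ patterns are vacuously avoided and all three parking functions of size $2$ are counted.

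I expect the main obstacle to be the fiber count rather than the pattern characterization: one must set up the correspondence between parking functions over a fixed reading word and admissible increasing-block/column decompositions carefully, and then verify that merging the terminal ascent of $\sigma^\ast$ is compatible with the parking condition in exactly one way. Once the uniqueness of the empty-column placement is established, the two sets are handled identically, since both versions of $\sigma^\ast$ share the same run structure (a maximal decreasing prefix followed by a single terminal ascent), and the count $1+2=3$ then follows uniformly.
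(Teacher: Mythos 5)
Your proposal is correct and follows essentially the same route as the paper: identify that $\mathcal{S}_n(P)$ consists of exactly two permutations ($J_n$ plus one permutation that is decreasing except for a terminal ascent), then count one parking function for $J_n$ and two for the other, giving $1+2=3$. The only difference is one of rigor, in the paper's favor on brevity and in yours on completeness: where the paper simply exhibits the two parking functions for $\sigma^\ast$ as "easy to see," you justify via the parking inequality $c_0+\cdots+c_i\geq i+1$ why the empty column can only sit after the size-$2$ block, which is a worthwhile detail but not a different argument.
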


\begin{proof}
First we claim that any permutation $\pi$ that avoids 123, 132, 213, and 231 must be of the form $J_n$ or $J_{n-2} \ominus I_2$. To see this, notice that $\pi_1=n$ and inductively, since any subpermutation $\tau$ of $\pi$ avoids the same set of patterns, the next entry $\pi_2=n-1$. Continuing this way, it is quick to see that $\pi_k=n-k+1$ for each $1\leq k\leq n-3$. The only possibilities for the last three entries of $\pi$ are the only two patterns of length 3 not being avoided, that is, 312 and 321. Next, it is easy to see that there are exactly two parking functions associated with $\pi=J_{n-2} \ominus I_2$, namely, $\{n\}|\{n-1\}|\{n-2\}| \cdots |\{3\}|\{1\}|\{2\}$ and $\{n\}|\{n-1\}|\{n-2\}| \cdots |\{3\}|\{1,2\}|\emptyset$. Together with the unique parking function corresponding to $\pi=J_n$, we obtain the desired result.

For $\pi$ in $\mathcal{S}_n(123,132,231,312)$, either $\pi_1=n$ or $\pi_n=n$, since otherwise $\pi$ would contain 132 or 231. In the first case, since $\pi$ avoids 312, $\pi$ must be of the form $J_n$. In the second case, since $\pi$ avoids 123 and 132, $\pi_{n-1}=1$.  Now, since $\pi$ avoids 231, we conclude that $\pi$ has the form $J_{n-1} \oplus 1$.  There are exactly two parking functions associated with $\pi=J_{n-1} \oplus 1$, namely, $\{n-1\}|\{n-2\}| \cdots |\{2\}|\{1\}|\{n\}$ and $\{n-1\}|\{n-2\}| \cdots |\{2\}|\{1,n\}|\emptyset$. Together with the unique parking function corresponding to $\pi=J_n$, we obtain the desired result.
\end{proof}

\begin{theorem}
\label{T:foure} For $n \geq 2$,
$$\mathrm{pf}_n(123,132,213,312)=
\mathrm{pf}_n(123, 213, 231, 312)=n+1.$$
\end{theorem}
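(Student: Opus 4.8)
The plan is to follow the template of the proof of Theorem~\ref{T:fourd}: first identify the handful of permutations in each avoidance class, and then count the parking functions that map to each of them under $\phi$. For the first class I would show that $\mathcal{S}_n(123,132,213,312)=\{J_n,\ I_2\ominus J_{n-2}\}$. The key observation is that the only length-$3$ patterns not being avoided are $231$ and $321$, and in each of these the last of the three entries is the smallest. Hence for every triple of positions $i<j<k$ the value $\pi_k$ must be the minimum, which forces $\pi_k<\pi_j$ for all $j<k$ whenever $k\ge 3$; thus $\pi_3>\pi_4>\cdots>\pi_n$ and all of these lie below $\pi_1$ and $\pi_2$. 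Consequently $\{\pi_3,\dots,\pi_n\}=\{1,\dots,n-2\}$ in decreasing order and $\{\pi_1,\pi_2\}=\{n-1,n\}$, leaving exactly the two stated permutations.

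For the second class I would argue analogously that $\mathcal{S}_n(123,213,231,312)=\{J_n,\ 1\oplus J_{n-1}\}$, where now $132$ and $321$ are the surviving patterns. Locating the largest value $n$, it cannot sit in position $\ge 3$ (else $\pi_1,\pi_2,n$ would be a $123$ or $213$), so $\pi_1=n$ or $\pi_2=n$. If $\pi_1=n$, then avoidance of $312$ forces $\pi$ to be decreasing, giving $J_n$. If $\pi_2=n$, then avoidance of $231$ forces $\pi_1$ to lie below every later entry, so $\pi_1=1$, and the ``at most one smaller earlier entry'' consequence of avoiding $123$ and $213$ forces $\pi_3>\cdots>\pi_n$, yielding $1\oplus J_{n-1}$.

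The remaining, and common, task is to count the parking functions $p$ with $\phi(p)$ equal to the nontrivial permutation in each class; in both cases this permutation has a single ascent, located at positions $1,2$, followed by a strictly decreasing run. Reading a parking function column by column in block notation concatenates the increasing labels of the columns left to right, so the target sequence must be cut into consecutive increasing pieces; since every adjacent pair after position $2$ is a descent, each of positions $3,\dots,n$ must be its own singleton block, while positions $1,2$ may either merge into one block of size $2$ or split. The split case forces all column heights equal to $1$, i.e.\ the staircase $(NE)^n$, contributing one parking function. In the merged case there are $n-1$ nonempty blocks distributed among $n$ columns, leaving exactly one empty column; checking the parking condition $\sum_{j\le i}|S_j|\ge i$ shows the empty column may occupy any position $e\in\{2,\dots,n\}$ but not position $1$, giving $n-1$ parking functions. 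Together with the single parking function over $J_n$, this yields $1+\big((n-1)+1\big)=n+1$.

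I expect the main obstacle to be the bookkeeping in this last count: verifying that the parking (Dyck) inequality rules out exactly the configuration with an empty first column and permits all $n-1$ others, and confirming that once the column heights are fixed the labels are forced, so that distinct valid height sequences correspond bijectively to distinct parking functions. Once that is settled, the two classes are handled by the identical structural count, which is what makes the two enumerations coincide.
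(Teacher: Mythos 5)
Your proposal is correct and takes essentially the same approach as the paper: both arguments pin down each avoidance class as exactly $\{J_n,\, I_2\ominus J_{n-2}\}$ (respectively $\{J_n,\, 1\oplus J_{n-1}\}$), then count one parking function for $J_n$ plus $n$ for the single-ascent permutation, namely the all-singleton-block one and the $n-1$ parking functions with merged first block $\{\pi_1,\pi_2\}$ and one empty block in any position from $2$ through $n$. Your derivations of the classifications (via the complementary patterns $231,321$ and the location of $n$) are only cosmetically different from the paper's.
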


\begin{proof}
For any $\pi$ in $\mathcal{S}_n(123,132,213,312)$, $\pi_n=1$. Further, since $\pi$ avoids 123 and 132, then either $\pi_1=n$ or $\pi_1=n-1$. If $\pi_1=n$, then $\pi$ avoiding 312 means that $\pi=J_n$. Otherwise, $\pi_1=n-1$ and since $\pi$ avoids 213 and 312 then $\pi=I_2\ominus J_{n-2}$. In the latter case, there are $n$ possible parking functions, namely, $\{n-1\}|\{n\}|\{n-2\}| \cdots |\{2\}|\{1\}$ or any of the $(n-1)$ parking functions of the form  $\{n-1,n\}|\{n-2\}| \cdots |\{k\}|\emptyset|\{k-1\}|\cdots|\{2\}|\{1\}$. Finally, adding this to the unique parking function corresponding to $\pi=J_n$, we obtain the desired result.

In a similar vein, given $\pi$ in $\mathcal{S}_n(123,213,231,312)$, $\pi_1=1$ or $\pi_1=n$, since $\pi$ avoids 213 and 231. In the second case, since $\pi$ avoids 312, then $\pi$ must be of the form $J_n$. If, on the other hand, $\pi_1=1$, then because $\pi$ avoids 123, $\pi$ must be of the form $1\oplus J_{n-1}$. There are $n$ possible parking functions when $\pi_1=1$, namely, $\{1\}|\{n\}|\{n-1\}|\{n-2\}| \cdots |\{2\}$ or any of the $(n-1)$ parking functions of the form  $\{1,n\}|\{n-1\}|\{n-2\}| \cdots |\{k\}|\emptyset|\{k-1\}|\cdots|\{2\}$. Together with the unique parking function corresponding to $\pi=J_n$, we obtain the desired result.
\end{proof}

\begin{theorem}
\label{T:foura} For $n \geq 2$,
$$\mathrm{pf}_n(132, 213, 231, 312)=C_n+1.$$
\end{theorem}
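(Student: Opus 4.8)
The plan is to first pin down the very small class $\mathcal{S}_n(132,213,231,312)$ and then count parking functions column by column, exactly as in the preceding theorems. The key structural observation is that both the largest entry $n$ and the smallest entry $1$ are forced to the two ends of $\pi$. Indeed, if $n=\pi_k$ had entries on both sides, say $\pi_i$ with $i<k$ and $\pi_j$ with $j>k$, then $\pi_i\,n\,\pi_j$ would be order-isomorphic to $132$ (when $\pi_i<\pi_j$) or to $231$ (when $\pi_i>\pi_j$), both of which are forbidden; hence $\pi_1=n$ or $\pi_n=n$. Symmetrically, placing $1=\pi_k$ with entries on both sides produces a copy of $213$ or of $312$, so $\pi_1=1$ or $\pi_n=1$. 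For $n\ge 2$ these two facts force $\{\pi_1,\pi_n\}=\{1,n\}$.

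Next I would split into the two remaining cases and show each collapses to a single permutation. If $\pi_1=1$ and $\pi_n=n$, then any descent $\pi_i>\pi_j$ with $1<i<j<n$ among the interior entries would make $1\,\pi_i\,\pi_j$ a copy of $132$; thus the interior must be increasing and $\pi=I_n$. If instead $\pi_1=n$ and $\pi_n=1$, then any ascent $\pi_i<\pi_j$ in the interior would make $n\,\pi_i\,\pi_j$ a copy of $312$; thus the interior must be decreasing and $\pi=J_n$. Therefore, for $n\ge 2$, $$\mathcal{S}_n(132,213,231,312)=\{I_n,J_n\}.$$

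Finally, I would count the parking functions mapping to each of these two permutations and add them, noting that for $n\ge 2$ the equality $I_n\ne J_n$ makes the two contributions disjoint. A parking function with $\phi(p)=\pi$ amounts to cutting $\pi$ into the increasing runs that fill consecutive columns of the Dyck path; since $I_n$ is entirely increasing, every Dyck path can be labeled in this way, giving exactly $C_n$ parking functions (this is precisely the computation behind $\mathrm{pf}_n(21)=C_n$). Since $J_n$ is entirely decreasing, every column must hold a single label, forcing the path $(NE)^n$ and giving exactly one parking function (the computation behind $\mathrm{pf}_n(12)=1$). Summing the two counts yields $\mathrm{pf}_n(132,213,231,312)=C_n+1$.

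I expect no serious obstacle here: the only point requiring genuine care is the case analysis establishing that the interior of $\pi$ must be monotone, which relies on having already fixed both $1$ and $n$ at the ends. Once $\mathcal{S}_n(132,213,231,312)=\{I_n,J_n\}$ is in hand, the reduction to the already-understood enumerations $\mathrm{pf}_n(21)$ and $\mathrm{pf}_n(12)$ is immediate.
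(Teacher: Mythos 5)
Your proposal is correct and follows essentially the same route as the paper: both arguments reduce the problem to showing $\mathcal{S}_n(132,213,231,312)=\{I_n,J_n\}$ and then count $C_n$ parking functions for $I_n$ (any Dyck path, increasing labels) plus one for $J_n$ (the path $(NE)^n$). The only difference is that the paper asserts the classification in one line (every length-3 subsequence must be monotone, so $\pi$ is monotone), while you verify it in more detail by first forcing $\{\pi_1,\pi_n\}=\{1,n\}$ and then showing the interior is monotone.
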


\begin{proof}
For $n \geq 3$, every subsequence of length 3 needs to form either a 123 pattern or a 321 pattern.  The only permutations that accomplish this are the increasing permutation, which can be associated with any of the $C_n$ possible Dyck paths or the decreasing permutation, which can be associated with the one Dyck path of the form $(NE)^n$.  This yields a total of $C_n+1$ total pattern-avoiding parking functions.
\end{proof}

\begin{theorem}
\label{T:fourb} For $n \geq 2$,
$$\mathrm{pf}_n(132, 213, 231, 321)=\mathrm{pf}_n(132, 231, 312, 321)=C_n+C_{n-1}.$$
\end{theorem}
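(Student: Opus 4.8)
The plan is to treat each of the two pattern sets in the same two stages: first determine exactly which permutations of length $n$ avoid all four patterns, and then, for each surviving permutation $\pi$, count the parking functions $p$ with $\phi(p)=\pi$ and sum. The guiding observation is that this last count depends only on the descent set of $\pi$. Once a Dyck path is fixed, requiring $\phi(p)=\pi$ forces the labels to read $\pi_1\pi_2\cdots\pi_n$ along the path, and such a labeling is a legal parking function precisely when each vertical segment receives an increasing subword. Hence a block boundary is forced at every descent of $\pi$ and is optional at every ascent, subject only to the vertical run-lengths $(a_0,\dots,a_{n-1})$ satisfying the Dyck inequalities $a_0+\cdots+a_j\ge j+1$ for all $j$. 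I would record this reduction once and reuse it for both sets.

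For the set $\{132,213,231,321\}$ the only length-$3$ patterns not forbidden are $123$ and $312$. If the largest entry $n$ occupied a position strictly between the first and the last, then any entry to its left together with $n$ and any entry to its right would form a $132$ or a $231$; hence $n$ sits at position $1$ or position $n$. If $\pi_n=n$, avoiding $213$ forces $\pi_1\cdots\pi_{n-1}$ to be increasing, so $\pi=I_n$; if $\pi_1=n$, avoiding $321$ forces $\pi_2\cdots\pi_n$ to be increasing, so $\pi=n\,1\,2\cdots(n-1)$. For the set $\{132,231,312,321\}$ the allowed patterns are $123$ and $213$; the same argument puts $n$ at an end, but now the case $\pi_1=n$ is excluded for $n\ge 3$ (it would create a $312$ or $321$), and an easy induction on the stripped permutation $\pi_1\cdots\pi_{n-1}$ (after removing $\pi_n=n$) yields exactly $\pi=I_n$ or $\pi=2\,1\,3\,4\cdots n$. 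So each set leaves precisely two avoiders for $n\ge 2$: the increasing permutation, with empty descent set, and one permutation whose unique descent is at position $1$.

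It then remains to count parking functions over these two permutations. Over $I_n$ every block is automatically increasing, so every Dyck path occurs exactly once and the contribution is $C_n$, matching $\mathrm{pf}_n(21)$. For the second permutation, whose first two entries satisfy $\pi_1>\pi_2$, the forced descent at position $1$ means the first vertical line carries only the single north-step labeled $\pi_1$, after which the remaining labels $\pi_2\cdots\pi_n$ are increasing and impose no further constraint. Geometrically the path begins $NE$, arrives at $(1,1)$, and from there traces an arbitrary lattice path to $(n,n)$ staying weakly above the diagonal; such paths are exactly the Dyck paths of semilength $n-1$, so the contribution is $C_{n-1}$. Summing the two contributions gives $C_n+C_{n-1}$ for both sets.

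The step I expect to require the most care is the general reduction of the first paragraph: making precise that parking functions with $\phi(p)=\pi$ correspond to Dyck paths cutting $\pi$ into increasing runs, and verifying that forcing $a_0=1$ in the single-descent case really leaves the ballot inequalities equivalent to those of size $n-1$ rather than to something weaker. Once that equivalence is nailed down, the characterization of the avoiders and the final summation are routine, apart from separately checking the small value $n=2$, where both nontrivial avoiders coincide with the permutation $21$.
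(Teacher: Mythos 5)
Your proposal is correct and follows essentially the same route as the paper: both arguments reduce each pattern set to exactly two avoiders (the increasing permutation $I_n$ and one permutation with a single descent at position $1$, namely $1\ominus I_{n-1}$ resp.\ $J_2\oplus I_{n-2}$), then count $C_n$ parking functions over $I_n$ and $C_{n-1}$ over the single-descent permutation because its first entry is forced into a singleton first block. Your explicit descent-set reduction and the geometric identification of the paths starting with $NE$ with Dyck paths of semilength $n-1$ simply make precise what the paper states more briefly.
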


\begin{proof}
In the case where $\pi \in \mathcal{S}_n(132, 213, 231, 321)$, since $\pi$ avoids 213 and 231, either $\pi_1=1$ or $\pi_1=n$. If $\pi_1=1$, then avoiding 132 means that $\pi=I_n$. On the other hand, if $\pi_1=n$, avoiding 321 means that $\pi$ is of the form $1\ominus I_{n-1}$. The former case yields $C_n$ parking functions. We claim that the latter case yields $C_{n-1}$ parking functions. This is because $n$ is always in a block of size 1 at the beginning of each parking function, hence counting all such parking functions is equivalent to counting all parking functions corresponding to $I_{n-1}$. Hence the claim. Adding up gives a total of $C_n+C_{n-1}$ parking functions.

In the case where $\pi \in \mathcal{S}_n(132,231,312,321)$, $\pi_1 < 3$ (otherwise the digits $\pi_1$, 1, and 2 would form a 312 pattern or a 321 pattern). If $\pi_1=1$, then since $\pi$ avoids 132, $\pi=I_n$.  If $\pi_1=2$, then since $\pi$ avoids 213, $\pi_2=1$, and since $\pi$ avoids 132, $\pi = J_2 \oplus I_{n-2}$.  The case where $\pi=I_n$ corresponds to $C_n$ parking functions while the permutation $J_2\oplus I_{n-2}$ corresponds to $C_{n-1}$ parking functions, since 2 is in a block of size 1 at the beginning of each parking function and so counting all such parking functions is equivalent to counting all parking functions corresponding to $I_{n-1}$.
\end{proof}

\begin{theorem}
\label{T:fourc}For $n \geq 2$,
$$\mathrm{pf}_n(132, 213, 312, 321)=\mathrm{pf}_n(213, 231, 312, 321)=2C_n-C_{n-1}.$$
\end{theorem}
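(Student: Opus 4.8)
The plan is to mirror the structure of the preceding proofs: for each of the two pattern sets, first describe the permutations in $\mathcal{S}_n(P)$ explicitly, and then count the parking functions mapping to each such permutation under $\phi$. I expect that each set leaves exactly two permutations available, one of which is $I_n$ and the other of which has a single descent at its very end; the count $2C_n-C_{n-1}$ should then arise as $C_n$ (from $I_n$) plus $C_n-C_{n-1}$ (from the trailing-descent permutation).

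For $\mathcal{S}_n(132,213,312,321)$ the only length-3 patterns not forbidden are 123 and 231, so every length-3 subsequence of $\pi$ must be order-isomorphic to one of these. I would locate the entry $1$: if some entry lay both before and after $1$, that triple would read $a\,1\,b$ and be order-isomorphic to 213 or 312, both forbidden, so $1$ must be first or last. If $\pi_1=1$, then every triple $1\,\pi_i\,\pi_j$ must be 123 (it cannot be 231), forcing $\pi=I_n$; if $\pi_n=1$, then every triple $\pi_i\,\pi_j\,1$ must be 231 (not 321), forcing $\pi_1\cdots\pi_{n-1}$ increasing and hence $\pi=23\cdots n1=I_{n-1}\ominus 1$. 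An entirely analogous argument for $\mathcal{S}_n(213,231,312,321)$ (whose permitted patterns are 123 and 132), tracking the largest entry $n$, shows that $n$ is last or second-to-last, yielding exactly $\pi=I_n$ or $\pi=I_{n-2}\oplus J_2=12\cdots(n-2)\,n\,(n-1)$.

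The counting step is where the real work lies, and I would handle both cases uniformly by encoding a parking function with reading word $\tau$ as a column-assignment sequence $(c_1,\dots,c_n)$, where $c_k$ is the column (vertical line) containing the $k$-th label in reading order. Reading column by column forces $c_1\le c_2\le\cdots\le c_n$; the Dyck/parking condition is equivalent to $c_k\le k$ for all $k$ (so in particular $c_1=1$); and the requirement that labels within a column increase forces $c_k<c_{k+1}$ at each descent of $\tau$ while leaving ascents free. This makes the correspondence between such sequences and parking functions with reading word $\tau$ a bijection. For $\tau=I_n$ there are no descents, so the count is the number of weakly increasing sequences with $c_k\le k$, which is $C_n$. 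For the trailing-descent permutation the lone extra requirement is $c_{n-1}<c_n$, so I would count by complementary subtraction: from the $C_n$ sequences valid for $I_n$, remove those with $c_{n-1}=c_n$, and these are in bijection (by deleting $c_n$) with the $C_{n-1}$ valid sequences of length $n-1$. Hence the trailing-descent permutation contributes $C_n-C_{n-1}$, and summing gives $2C_n-C_{n-1}$.

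The main obstacle I anticipate is justifying the bijection in the counting step cleanly --- in particular verifying that the conditions $c_1\le\cdots\le c_n$, $c_k\le k$, and strictness exactly at descents together characterize parking functions with a prescribed reading word, and that the ``delete $c_n$'' map really is the bijection witnessing the $C_{n-1}$ count. The permutation-characterization steps, by contrast, I expect to be routine case analyses in the spirit of the earlier theorems.
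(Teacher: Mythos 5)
Your proposal is correct and follows essentially the same route as the paper: the same characterization of the two avoidance classes (each consisting of $I_n$ together with one permutation whose unique descent is in the final position), and the same complementary count $C_n + (C_n - C_{n-1})$. Your column-sequence encoding is a formalized version of the paper's block argument --- your condition $c_{n-1}=c_n$ is exactly the paper's ``$n-1$ and $n$ lie in the same block,'' your deletion bijection is its merging of those two labels into a single digit (giving $C_{n-1}$), and the strict-inequality case corresponds to its label-swapping/relabeling bijection with the single-descent permutation.
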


\begin{proof}
When $n=2$, we already know there are $2C_2-C_1=3$ parking functions, so we focus on the case where $n\geq 3$.

Suppose $\pi \in \mathcal{S}_n(213, 231, 312, 321)$.  Then since $\pi$ avoids 312 and 321, either $\pi_{n-1}=n$ or $\pi_n=n$.  In the first case, since $\pi$ avoids 231, we see that $\pi_n={n-1}$, and since $\pi$ avoids 213, $\pi=I_{n-2} \oplus J_2$.  In the second case, since $\pi$ avoids 213, $\pi=I_n$.  There are $C_n$ parking functions corresponding to $I_n$.  For the case where $\pi=I_{n-2} \oplus J_2$, we may count the parking functions corresponding to $I_n$, then exclude the parking functions where $n-1$ and $n$ appear in the same block.  When $n-1$ and $n$ are in different blocks, we trade their locations to obtain a parking function corresponding to $\pi=I_{n-2} \oplus J_2$.  If $n-1$ and $n$ are in the same block, we may treat them as a single digit, so the number of parking functions of $I_n$ where $n-1$ and $n$ are in the same block is given by $C_{n-1}$ and the parking functions corresponding to $I_{n-2} \oplus J_2$ is given by $C_n-C_{n-1}$.  Combining this with the case where $\pi=I_n$ gives $2C_n-C_{n-1}$ parking functions.

Suppose $\pi \in \mathcal{S}_n(132, 213, 312, 321)$.  Then $\pi_1 < 3$; otherwise the digits $\pi_1$, 1, and 2 would form a 312 pattern or a 321 pattern.  If $\pi_1=1$, then since $\pi$ avoids 132, $\pi=I_n$.  If $\pi_1=2$, then $\pi_n=1$; otherwise $\pi_1$, 1, and $\pi_n$ would form a 213 pattern.  Finally, since $\pi_n=1$, we see $\pi = I_{n-1} \ominus 1$, otherwise, $\pi_n=1$ would be involved in a 321 pattern.  There are $C_n$ parking functions corresponding to $I_n$.  To count permutations corresponding to $I_{n-1} \ominus 1$, we count parking functions corresponding to $I_n$ where $n-1$ and $n$ are in different blocks.  If they are in different blocks, we may replace $n$ with 1 and we may replace $k$ with $k+1$ for $1 \leq k \leq n-1$ to obtain a parking function corresponding to $I_{n-1} \ominus 1$.  There are $C_n$ total parking functions corresponding to $I_n$, and $C_{n-1}$ of them have $n-1$ and $n$ in the same block.  Therefore, there are $C_n-C_{n-1}$ parking functions corresponding to $I_{n-1} \ominus 1$.  Combining this with the case where $\pi=I_n$ gives a total of $2C_n-C_{n-1}$ parking functions.

\end{proof}

\section{Avoiding three patterns}

There are $\binom{6}{3}=20$ distinct sets of three patterns of length 3. However 4 of these sets involve both 123 and 321 and are addressed by the Erd\H{o}s-Szekeres Theorem.  From the remaining 16 sets of patterns, we get 10 different enumeration sequences.  A summary of results is shown in Table \ref{tab:three_patterns}.

\begin{table}[hbt]
    \centering
    \begin{tabular}{|c|c|c|c|}
    \hline
    Patterns $P$&$\mathrm{pf}_n(P)$, $1 \leq n \leq 6$&OEIS&Result\\
    \hline
123, 132, 231&1, 3, 5, 7, 9, 11&A005408&Theorem \ref{T:123_132_231}\\
\hline
123, 132, 312&\multirow{3}{*}{1, 3, 6, 10, 15, 21}&\multirow{3}{*}{A000217}&\multirow{3}{*}{Theorem \ref{T:123_132_312}}\\
123, 213, 231&&&\\
123, 231, 312&&&\\
\hline
123, 213, 312&1, 3, 7, 13, 21, 31&A002061&Theorem \ref{T:123_213_312}\\
\hline
123, 132, 213&1, 3, 6, 17, 43, 123&A143363&Theorem \ref{T:123_132_213}\\
\hline
132, 213, 231&\multirow{2}{*}{1, 3, 8, 22, 64, 196}&\multirow{2}{*}{A014138}&\multirow{2}{*}{Theorem \ref{T:132_231_312}}\\
132, 231, 312&&&\\
\hline
132, 213, 312&\multirow{2}{*}{1, 3, 9, 28, 90, 297}&\multirow{2}{*}{A000245}&\multirow{2}{*}{Theorem \ref{T:132_213_312}}\\
213, 231, 312&&&\\
\hline
132, 231, 321&1, 3, 9, 29, 98, 342&A077587&Theorem \ref{T:132_231_321}\\
\hline
132, 213, 321&\multirow{3}{*}{1, 3, 10, 35, 126, 462}&\multirow{3}{*}{A001700}&\multirow{3}{*}{Theorem \ref{T:132_213_321}}\\
132, 312, 321&&&\\
213, 231, 321&&&\\
\hline
213, 312, 321&1, 3, 11, 41, 154, 582&A076540&Theorem \ref{T:213_312_321}\\
\hline
231, 312, 321&1, 3, 10, 38, 154, 654&A001002&Theorem \ref{T:231_312_321}\\
\hline
    \end{tabular}
    \caption{\textit{Enumeration data for parking functions avoiding three patterns of length 3}}
    \label{tab:three_patterns}
\end{table}

\begin{theorem}
\label{T:123_132_231}
$$\mathrm{pf}_n(123,132,231)=2n-1.$$
\end{theorem}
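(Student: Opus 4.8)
The plan is to follow the same two-stage strategy used throughout the previous section: first describe $\mathcal{S}_n(123,132,231)$ explicitly, and then, for each such permutation $\pi$, count the parking functions $p$ with $\phi(p)=\pi$.

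For the first stage I would track the largest entry $n$. Since $\pi$ avoids both $132$ and $231$ — the two length-$3$ patterns whose maximum sits in the middle — the entry $n$ can never have a smaller entry both before it and after it, so $\pi_1=n$ or $\pi_n=n$. If $\pi_n=n$, then $\pi_1\cdots\pi_{n-1}$ must be decreasing, since any ascent among the first $n-1$ entries would combine with the terminal $n$ to form a $123$; this forces $\pi=J_{n-1}\oplus 1$. If $\pi_1=n$, then because the leftmost entry of each of $123$, $132$, $231$ is not its largest, the maximum at the front lies in no occurrence of a forbidden pattern, so $\pi=n\cdot\sigma$ with $\sigma\in\mathcal{S}_{n-1}(123,132,231)$. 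This recursion gives $|\mathcal{S}_n(123,132,231)|=|\mathcal{S}_{n-1}(123,132,231)|+1=n$, and unrolling it shows the permutations are exactly $J_n$ together with, for each $k$ with $2\le k\le n$, the permutation $\sigma_k=J_{n-k}\ominus(J_{k-1}\oplus 1)$, that is, a decreasing run followed by a single ascent in the final position.

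For the second stage I would use the principle that a parking function $p$ with $\phi(p)=\pi$ amounts to a factorization of $\pi$ into consecutive increasing blocks — a block break is mandatory at each descent of $\pi$ and optional at each ascent — together with a choice of which of the columns $B_1,\dots,B_n$ these blocks occupy (in left-to-right order, the remaining columns being empty), subject to the parking-function inequality $\sum_{i\le j}|B_i|\ge j$ for all $j$. For $\pi=J_n$ there are no ascents, so the only factorization is into $n$ singletons, which must fill all $n$ columns, yielding exactly one parking function. Each $\sigma_k$ has a single ascent, located at its last two entries, so there are exactly two factorizations: either that ascent is cut, giving $n$ singleton blocks, or it is not, giving a final block of size $2$ and all other blocks of size $1$.

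The heart of the argument — and the step I expect to require the most care — is verifying that each factorization extends to a \emph{unique} legal column assignment, exactly as in the analysis of $J_{n-2}\ominus I_2$ in Theorem~\ref{T:fourd}. The all-singleton factorization has $n$ blocks and must occupy all $n$ columns in order. The factorization with a size-$2$ final block has only $n-1$ nonempty blocks summing to $n$, so precisely one column is empty; since every block preceding the size-$2$ block has size $1$, the inequality $\sum_{i\le j}|B_i|\ge j$ forbids any empty column before that block and permits exactly one empty column afterward, forcing the empty column to the very end. Hence each $\sigma_k$ contributes exactly two parking functions. Summing, $J_n$ contributes $1$ and the $n-1$ permutations $\sigma_2,\dots,\sigma_n$ contribute $2$ each, for a total of $1+2(n-1)=2n-1$.
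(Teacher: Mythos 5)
Your proof is correct and follows essentially the same two-stage approach as the paper: you characterize $\mathcal{S}_n(123,132,231)$ as $J_n$ together with the $n-1$ permutations $\sigma_k=J_{n-k}\ominus(J_{k-1}\oplus 1)$ (tracking the position of $n$ rather than of $1$, but arriving at the identical list), and then count one parking function for $J_n$ and two for each $\sigma_k$, giving $1+2(n-1)=2n-1$. Your explicit check via the inequality $\sum_{i\le j}|B_i|\ge j$ that the lone empty block is forced to the final position is just a more detailed justification of the step the paper asserts directly (that the only choice is whether $1$ and $k$ share a block), so the two arguments coincide.
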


\begin{proof}
Suppose $\pi \in \mathcal{S}_n(123,132,231)$. Because, $\pi$ avoids $123$ and $132$, either $\pi_{n-1}=1$ or $\pi_n=1$. In addition, avoiding $231$ means that no ascent can appear before 1 in $\pi$. Combining these properties, either $\pi=J_n$ or $\pi=J_{n-k} \ominus \left(J_{k-1} \oplus 1\right)$ where $2 \leq k \leq n$. There is one parking function corresponding to the permutation $J_n$. In the second case, since the first $n-1$ digits of $\pi=J_{n-k} \ominus \left(J_{k-1} \oplus 1\right)$ are decreasing they must appear in separate blocks.  We may choose whether to put 1 and $k$ in the same block or in separate blocks, so for each of the $n-1$ choices of $k$, there are two parking functions. Combining both cases, we get a total of $1+2(n-1)=2n-1$ pattern-avoiding parking functions.
\end{proof}

\begin{theorem}
\label{T:123_132_312}
$$\mathrm{pf}_n(123,132,312)=\mathrm{pf}_n(123,213,231)=\mathrm{pf}_n(123,231,312)=\binom{n+1}{2}.$$
\end{theorem}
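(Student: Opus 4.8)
The plan is to treat the three pattern sets in parallel by first describing each permutation class $\mathcal{S}_n(P)$ explicitly and then counting, for each surviving permutation, how many parking functions read off to it under $\phi$. First I would show that each of the three classes consists of exactly $n$ permutations. For $\mathcal{S}_n(123,132,312)$ I would argue, in the spirit of Theorem~\ref{T:foure}, that avoiding $123$ and $132$ forces $\pi_1\in\{n-1,n\}$ (the first entry can have at most one larger entry to its right); the choice $\pi_1=n$ together with $312$-avoidance gives $J_n$, while $\pi_1=n-1$ forces the values $1,\dots,n-2$ to occur in decreasing order, so that $\pi$ is precisely $J_{n-1}$ with $n$ inserted into one of its $n$ slots. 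Analogous first-entry / position-of-$n$ arguments would show that $\mathcal{S}_n(123,213,231)$ is $J_n$ together with the permutations $n\,(n-1)\cdots(n-t+1)\,1\,(n-t)\cdots 2$ for $0\le t\le n-2$, and that $\mathcal{S}_n(123,231,312)$ is the set of permutations $(i-1)(i-2)\cdots1\,n\,(n-1)\cdots i$ for $1\le i\le n$. In every case the conclusion is the same: the class equals $\{J_n\}$ together with, for each $j\in\{1,\dots,n-1\}$, exactly one permutation whose descent set is $\{1,\dots,n-1\}\setminus\{j\}$, that is, a permutation with a single ascent located at position $j$.

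Next I would prove the counting lemma that does the real work: if $\pi$ has a single ascent at position $j$, then exactly $n+1-j$ parking functions satisfy $\phi(p)=\pi$. Working in block notation, every descent of $\pi$ forces a block boundary, so all entries must occupy singleton blocks except that the two entries $\pi_j,\pi_{j+1}$ straddling the lone ascent may optionally be merged into a single increasing block of size $2$. If no merge occurs, the $n$ singletons fill the $n$ columns with no freedom, giving one parking function. If the merge occurs, we are left with $n-1$ nonempty blocks in a forced order and must insert the one remaining empty block into one of the $n$ available slots; the parking condition $\sum_{k\le i}|B_k|\ge i$ fails exactly when the empty block is placed before the merged block and holds for each of the $n-j$ legal slots at or after it. Hence the total is $1+(n-j)=n+1-j$, a quantity depending only on the descent set of $\pi$ (and consistent with the count $1$ for $J_n$).

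Finally I would sum over the class: the permutation $J_n$ contributes $1$, and the permutation with a unique ascent at position $j$ contributes $n+1-j$, so for each of the three sets $P$,
$$\mathrm{pf}_n(P)=1+\sum_{j=1}^{n-1}(n+1-j)=\sum_{m=1}^{n}m=\binom{n+1}{2}.$$

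I expect the main obstacle to be the permutation-class characterizations rather than the counting. Verifying that the proposed $n$ permutations are exactly those avoiding each triple—in particular confirming that no additional ascent can occur—requires a careful case-by-case check that the forced decreasing runs really are forced; the argument for $\mathcal{S}_n(123,231,312)$, where one pins down the position of $n$ and then must show that both the prefix and the suffix are compelled to decrease, is the most delicate. A secondary point that needs care is the boundary behavior of the Dyck/parking condition in the counting lemma, namely the exact set of slots at which inserting the empty block keeps every partial sum of block sizes at least its index.
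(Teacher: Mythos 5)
Your proposal is correct and follows essentially the same route as the paper: both arguments characterize each of the three classes as $J_n$ together with exactly one permutation having a single ascent at each position $j\in\{1,\dots,n-1\}$, and both count the associated parking functions by either leaving the ascent pair in separate blocks (one way) or merging it into a block of size $2$ and placing the forced empty block in one of the $n-j$ slots after it. The only organizational difference is that you package this count as a uniform lemma applied to all three classes, whereas the paper counts the first two classes directly and handles the third by an ascent-preserving bijection onto one of the others --- a repackaging of the same idea rather than a different argument.
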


\begin{proof}
Suppose $\pi \in \mathcal{S}_n(123, 132, 312)$.  Since $\pi$ avoids 132, all digits before $n$ are larger than all digits after $n$.  Since $\pi$ avoids 123, all digits before $n$ are in decreasing order.  Since $\pi$ avoids 312, all digits after $n$ are in decreasing order.  Therefore, $\pi =\left(J_{k} \oplus 1\right) \ominus J_{n-k-1}$, for some $0 \leq k \leq n-1$. There are two possible cases for the structure of such a parking function:
\begin{enumerate}[(i)]
    \item  It has the form $\{n-1\}|\{n-2\}|\cdots |\{n-k\}|\{n\}|\{n-k-1\}| \cdots |\{1\}$.
    \item It has the form $\{n-1\}|\{n-2\}|\cdots |\{n-k, n\}|\{n-k-1\}| \cdots|\emptyset|\cdots  |\{1\}$. 
\end{enumerate}

There are $n$ parking functions of the first form -- one for each choice of $k$.  Given a particular choice of $k \geq 1$, there are $n-k$ parking functions of the second form -- one for each choice of location of the empty block. Hence, the total number of possible parking functions is given by:
$$n+\sum_{k=1}^{n-1} (n-k) = \sum_{k=0}^{n-1} (n-k) = \binom{n+1}{2}.$$

Suppose $\pi \in \mathcal{S}_n(123,213,231)$.  Suppose $\pi_1=1$.  Since $\pi$ avoids 123, we know $\pi=1 \oplus J_{n-1}$.  On the other hand, suppose $\pi_{k+1}=1$ where $k>0$.  Then since $\pi$ avoids 213, all digits before 1 are larger than all digits after 1.  Since $\pi$ avoids 231, all digits before 1 are in decreasing order.  Since $\pi$ avoids 123, all digits after 1 are in decreasing order.  Combining these properties, $\pi = J_{k} \ominus \left(1 \oplus J_{n-k-1}\right)$ for some $0 \leq k \leq n-1$. 

There are two possible cases for any parking function associated with $\pi=1 \oplus J_{n-1}$:
\begin{enumerate}[(i)]
    \item It has the form $\{1\}|\{n\}|\{n-1\}| \cdots |\{2\}$.
    \item It has the form $\{1,n\}|\{n-1\}| \cdots|\emptyset|\cdots |\{2\}$.
    \end{enumerate}
    There are $n$ parking functions corresponding to $1\oplus J_{n-1}$; one of the first type, and $n-1$ of the second type, depending on the location of the empty block.
    
    Similarly, there are two possible cases for any parking function associated with $\pi=J_{k} \ominus \left(1 \oplus J_{n-k-1}\right)$ with $1 \leq k \leq n-1$:
    \begin{enumerate}[(i)]
    \item It has the form $\{n\}|\cdots |\{n-k+1\}|\{1\}|\{n-k\}| \cdots |\{2\}$.
    \item It has the form $\{n\}|\cdots |\{n-k+1\}|\{1,n-k\}|\{n-k-1\}| \cdots|\emptyset|\cdots |\{2\}$; after choosing what $k$ is, there are $(n-k-1)$ positions for the empty block and this gives a total of $(n-k-1)$ possibilities for each of the $(n-2)$ choices of $k$. 
\end{enumerate}

For parking functions corresponding to $J_{k} \ominus \left(1 \oplus J_{n-k-1}\right)$, there are  $n-1$ parking functions of the first type, depending on the choice of $k$.  Finally, for the second case, given a specific value of $k$, there are $n-k-1$ positions for the empty block.

Hence, the total number of possible parking functions is given by:
$$n+(n-1)+\sum_{k=1}^{n-1} (n-k-1) =  \binom{n+1}{2}.$$

Finally, suppose $\pi \in \mathcal{S}_n(123,231,312)$.  Suppose $\pi_k=1$.  Since $\pi$ avoids 312, all digits before 1 must be smaller than all digits after 1.  Since $\pi$ avoids 231, all digits before 1 must be in decreasing order.  Since $\pi$ avoids 123, all digits after 1 must be in decreasing order.  Combining all these properties, we see that $\pi=J_k \oplus J_{n-k}$ for some $1 \leq k \leq n$.

 To count the number of parking functions associated with $\pi$, we will establish a bijection between $\mathcal{S}_n(123, 231, 312)$ and $\mathcal{S}_n(123, 132, 312)$ that preserves the location of ascents and descents. Suppose $\pi \in \mathcal{S}_n(123, 231, 312)$ with $\pi_k=1$.  Increment all digits before $1$ by $n-k$ and decrement all digits after 1 by $k-1$ to obtain a permutation of the form $J_{k-1} \ominus \left(1 \oplus J_{n-k}\right)$.  In particular the only ascent in either permutation is between the digits $\pi_k=1$ and $\pi_{k+1}$.  Therefore, any parking function associated with one permutation corresponds to a parking function associated with the other permutation by making the appropriate adjustment of all digits other than 1.  Since we have a bijection between parking functions avoiding $\{123, 132, 312\}$ and parking functions avoiding $\{123, 231, 312\}$, they have the same enumeration.
\end{proof}

\begin{theorem}
\label{T:123_213_312}
$$\mathrm{pf}_n(123,213,312)=n(n-1)+1.$$
\end{theorem}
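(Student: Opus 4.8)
The plan is to mirror the two-step strategy of the preceding theorems: first pin down exactly which permutations lie in $\mathcal{S}_n(123,213,312)$, then count the parking functions lying above each one under $\phi$ and sum. For the classification I would track the value $1$, say $\pi_k=1$. If $1$ has neighbors on both sides, then for any $i<k<j$ the triple $\pi_i,1,\pi_j$ is order-isomorphic to $213$ (if $\pi_i<\pi_j$) or to $312$ (if $\pi_i>\pi_j$); both are forbidden, so $1$ must occupy position $1$ or position $n$. If $\pi_1=1$, then avoiding $123$ forces the remaining entries to decrease, giving $\pi=1\oplus J_{n-1}$. If $\pi_n=1$, then deleting this final $1$ creates no new copy of $123$, $213$, or $312$ (the smallest value, placed last, cannot be the largest element of a $123$ or $213$ nor the middle element of a $312$), so the standardized prefix lies in $\mathcal{S}_{n-1}(123,213,312)$ and I would induct. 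Unwinding the recursion should show that $\mathcal{S}_n(123,213,312)$ consists of exactly the $n$ permutations $\pi^{(m)}=m,n,n-1,\dots,m+1,m-1,\dots,1=(1\oplus J_{n-m})\ominus J_{m-1}$ for $1\le m\le n$, with $\pi^{(1)}=1\oplus J_{n-1}$ and $\pi^{(n)}=J_n$.

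The structural fact driving the count is that for $1\le m\le n-1$ the permutation $\pi^{(m)}$ has a single ascent, between $\pi_1=m$ and $\pi_2=n$, while every other adjacent pair is a descent, and $\pi^{(n)}=J_n$ has no ascent at all. Since two north-steps sharing a column must carry increasing labels, every descent forces a column break, so for $m\le n-1$ the only freedom is whether $\pi_1$ and $\pi_2$ share a column. I would split accordingly. In the ``split'' case all $n$ labels become singletons, which forces the staircase path $\{m\}|\{n\}|\cdots|\{1\}$ and yields one parking function. In the ``merged'' case the labels form $n-1$ increasing runs, one of size $2$ (the run $\{m,n\}$) followed by singletons, and these runs must go into the $n$ columns in strictly increasing order, leaving exactly one column empty; writing $t$ for the empty column, the parking condition $|B_1|+\cdots+|B_i|\ge i$ for all $i$ holds precisely when $t\ge 2$, giving $n-1$ parking functions. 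Hence every $\pi^{(m)}$ with $m\le n-1$ carries $n$ parking functions and $\pi^{(n)}=J_n$ carries $1$, for a total of $(n-1)\cdot n+1=n(n-1)+1$.

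The main obstacle I expect is the bookkeeping in the merged case: one must check that placing the size-$2$ run first makes column $1$ nonempty exactly when $t\ge2$, and confirm the partial-sum inequalities then hold for every $i$. The helpful feature is that this count depends only on the run-size profile $2,1,\dots,1$ and not on $m$, so the per-permutation contribution is uniformly $n$; I would verify the indexing against the known values $\mathrm{pf}_2=3$ and $\mathrm{pf}_3=7$ to be safe.
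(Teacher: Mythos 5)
Your proposal is correct and takes essentially the same approach as the paper: both arguments identify $\mathcal{S}_n(123,213,312)$ as $J_n$ together with the $n-1$ single-ascent permutations $(1\oplus J_{n-k})\ominus J_{k-1}$, and then count $n$ parking functions for each single-ascent permutation (one with all blocks singletons, plus $n-1$ choices for the location of the empty block when the ascent is merged into a block of size $2$), giving $n(n-1)+1$. The only cosmetic difference is that you pin down the position of the value $1$ (position $1$ or $n$, then induct), whereas the paper pins down the position of $n$ (position $1$ or $2$) directly; the resulting classification and counting are identical.
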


\begin{proof}
Let $\pi \in \mathcal{S}_n(123,213,312)$. 
Since $\pi$ avoids 123 and 213, either $\pi_1=n$ or $\pi_2=n$.

In the first case, since $\pi$ avoids 321, $\pi=J_n$.  In the second case, since $\pi$ avoids 312, all digits after $n$ are in decreasing order, and we only have a choice of the value of $\pi_1$.  Therefore, in this case, $\pi = \left(1 \oplus J_{n-k}\right) \ominus J_{k-1}$, where $1 \leq k \leq n-1$.

There is one parking function corresponding to $J_n$.

On the other hand, given a choice of $1 \leq k \leq n-1$, there are $n$ parking functions corresponding to $\left(1 \oplus J_{n-k}\right) \ominus J_{k-1}$.  One parking function has all blocks of size 1, while the other $n-1$ parking functions begin with the block $\{k,n\}$.  Since there are no other ascents in the underlying permutation, all blocks after the inital block must have size at most 1.  Then there are $n-1$ choices for the location of the empty block.  

These cases give a total of $1+n(n-1)$ pattern-avoiding parking functions.
\end{proof}

\begin{theorem}
\label{T:123_132_213}
$\mathrm{pf}_n(123,132,213)$ is equal to the number of rooted ordered trees with $n+1$ edges such that every vertex is either a leaf or adjacent to a leaf.
\end{theorem}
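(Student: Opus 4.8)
My plan is to reduce the statement to permutations, then to a lattice-path generating-function computation, and finally to match the result against the tree generating function.

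\medskip

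\noindent\textbf{Structure of the underlying permutations.} First I would pin down $\mathcal{S}_n(123,132,213)$. Avoiding $123$ and $132$ forces every entry to have at most one larger entry to its right, and a short argument shows that the maximum $n$ must sit in position $1$ or $2$: if $n$ were in position $p\ge 3$, the two entries before it would be the two largest available values in decreasing order, and together with $n$ they would form a $213$ pattern. Thus either $\pi=1\ominus\sigma$ with $\sigma\in\mathcal{S}_{n-1}(123,132,213)$, or $\pi=I_2\ominus\sigma$ with $\sigma\in\mathcal{S}_{n-2}(123,132,213)$. Iterating, each such $\pi$ is built by repeatedly prepending a singleton maximum or an increasing pair of maxima, so $\mathcal{S}_n(123,132,213)$ is in bijection with compositions of $n$ into parts equal to $1$ and $2$, where a $1$-part records a leading descent and a $2$-part records a leading ascent. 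In particular the descent composition of $\pi$ determines, and is determined by, this data.

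\medskip

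\noindent\textbf{Counting parking functions over a fixed permutation.} For a permutation $\pi$, let $N(\pi)$ be the number of parking functions $p$ with $\phi(p)=\pi$. Writing $c_i$ for the column ($x$-coordinate) of the $i$th north step, such a $p$ corresponds exactly to a sequence $0\le c_1\le\cdots\le c_n$ with $c_j\le j-1$ that increases strictly at each descent of $\pi$ (a descent must separate blocks, while an ascent need not). Hence $N(\pi)$ depends only on the descent composition of $\pi$. Substituting $d_j=(j-1)-c_j$ turns this into a path $d_1=0,\dots,d_n\ge 0$ that may rise by at most $1$, and only at the interior step of a $2$-part, and is non-increasing across every part boundary. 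From this reformulation one reads off directly that prepending a $1$-part preserves the count, i.e.\ $N(1\ominus\sigma)=N(\sigma)$, while prepending a $2$-part requires the refined statistic $N^{(h)}(\gamma)$ counting such paths whose first value is at most $h$; concretely $N\big((2,\gamma')\big)=N^{(0)}(\gamma')+N^{(1)}(\gamma')$.

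\medskip

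\noindent\textbf{Generating function.} Set $F(x)=\sum_{n\ge 0}\mathrm{pf}_n(123,132,213)\,x^n$ (with constant term $1$). Conditioning on the first part gives $F=1+xF+x^2F+x^2F^{(1)}$, where $F^{(h)}=\sum_\gamma N^{(h)}(\gamma)x^{|\gamma|}$, and the decomposition by the exact value of the first entry yields the linear relations
\begin{equation*}
F^{(h)}-F^{(h-1)}=xF^{(h)}+x^2\sum_{b=0}^{h+1}F^{(b)},\qquad F^{(-1)}:=0 .
\end{equation*}
Encoding $\sum_h F^{(h)}v^h$ and clearing the partial sums produces a functional equation in the auxiliary variable $v$; solving it by the kernel method (eliminating the bad root) should collapse the system to the single algebraic equation
\begin{equation*}
F=\frac{1}{\big(1-x-x^2F\big)\big(1-x^2F\big)} .
\end{equation*}
I expect this kernel-method step to be the main obstacle, since it is where the height-refined bookkeeping must be resolved; an attractive alternative would be to find a direct bijection realizing the two-factor product form above.

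\medskip

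\noindent\textbf{Matching the trees.} On the tree side the symbolic method is routine. Let $T(x)=\sum_k t_k x^k$ count rooted ordered trees by edges, where $t_k$ tallies those in which every internal vertex has a leaf child. Decomposing at the root, whose children form a sequence of leaf-children (each weight $x$) and internal-children (each weight $x(T-1)$) containing at least one leaf-child, gives
\begin{equation*}
T=1+\frac{1}{1-xT}-\frac{1}{1-x(T-1)} .
\end{equation*}
Finally I would substitute $T=1+xF$: the right-hand side becomes $1+\tfrac{1}{1-x-x^2F}-\tfrac{1}{1-x^2F}$, and subtracting $1$ and simplifying the difference of the two fractions reproduces exactly $F=\frac{1}{(1-x-x^2F)(1-x^2F)}$. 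Thus $T=1+xF$ holds, so $\mathrm{pf}_n(123,132,213)=[x^n]F=[x^{n+1}]T=t_{n+1}$, which is the desired tree count with $n+1$ edges.
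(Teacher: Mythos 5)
Your structural analysis is correct and coincides with the paper's first step: $\mathcal{S}_n(123,132,213)$ consists precisely of iterated skew sums of $1$ and $12$, so the underlying permutations are indexed by compositions of $n$ into parts $1$ and $2$, and the number of parking functions over a fixed permutation depends only on that composition. Your lattice-path reformulation of $N(\pi)$, the relations $N(1\ominus\sigma)=N(\sigma)$ and $N(12\ominus\sigma)=N^{(0)}(\sigma)+N^{(1)}(\sigma)$, the tree equation $T=1+\frac{1}{1-xT}-\frac{1}{1-x(T-1)}$, and the closing algebra showing that $T=1+xF$ is equivalent to $F=\frac{1}{(1-x-x^2F)(1-x^2F)}$ are all sound; I verified them, and the two functional equations do determine their power-series solutions uniquely, so the final implication would go through.

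The genuine gap is the step you yourself flag: the equation $F=\frac{1}{(1-x-x^2F)(1-x^2F)}$ is never derived. All you actually establish on the parking-function side is the infinite linear system $F=1+xF+x^2F+x^2F^{(1)}$ together with $F^{(h)}-F^{(h-1)}=xF^{(h)}+x^2\sum_{b=0}^{h+1}F^{(b)}$ (valid for $h\ge 1$; as displayed with $F^{(-1)}:=0$ it fails at $h=0$, where the empty composition contributes an extra $1$, though you state the $h=0$ equation correctly elsewhere). The collapse of this system to the single algebraic equation is asserted with ``should'' and ``I expect,'' not performed, and it is not routine: $F^{(h)}$ neither vanishes nor stabilizes as $h\to\infty$ (the number of admissible paths with first value exactly $h$ grows with $h$), so $\sum_h F^{(h)}v^h$ has coefficients that are rational rather than polynomial in $v$, the partial sums introduce a further $1/(1-v)$ factor, and one must justify which kernel root may be eliminated. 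Moreover, the target product form is visibly reverse-engineered from the tree equation, i.e., from the statement to be proved; as written, your argument establishes only the conditional claim ``if $F$ satisfies this equation, then the theorem holds.'' Note that the paper sidesteps this entirely by a different method: it constructs a recursive bijection from trees to parking functions, decomposing a tree at its root ($m$ root edges, $\ell$ of them internal), encoding that data as a parking function on the permutation $\underbrace{12\ominus\cdots\ominus 12}_{\ell}\ominus\underbrace{1\ominus\cdots\ominus 1}_{m-\ell-1}$ with prescribed empty blocks, and splicing in the recursively obtained parking functions of the subtrees. To complete your route you must either carry out the kernel-method computation in full detail, or find a direct combinatorial decomposition of the parking functions realizing the product $\frac{1}{1-x-x^2F}\cdot\frac{1}{1-x^2F}$.
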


Theorem \ref{T:123_132_213} is our first result without an explicit formula.  These trees are counted in sequence A143363 of the On-Line Encyclopedia of Integer Sequences \cite{oeis}, and they were first studied by Cheon and Shapiro \cite{CS08}.  The exact enumeration is best given by a functional equation satisfied by the relevant generating function.  For our present context, it is of greater interest to show the parallel structure between these trees and the pattern-avoiding parking functions.  A reader interested in the exact enumeration may consult \cite{CS08} or \cite{oeis}.

\begin{proof}[Proof of Theorem \ref{T:123_132_213}]
Suppose $\pi \in \mathcal{S}_n(123, 132, 213)$.  Since $\pi$ avoids 123 and 213 either $\pi_1=n$ or $\pi_2=n$.  If $\pi_2=n$, then since $\pi$ avoids 132, $\pi_1=n-1$. In either case the rest of the permutation can be filled in recursively.  Therefore, if $\pi \in \mathcal{S}_n(123,132,213)$, then either $\pi=1 \ominus \pi^\prime$ where $\pi^\prime \in \mathcal{S}_{n-1}(123,132,213)$ or $\pi=12 \ominus \pi^\prime$ where $\pi^\prime \in \mathcal{S}_{n-2}(123,132,213)$.  Due to this structure, the number of such permutations is counted by the Fibonacci numbers.  Also, observe that if $\pi \in \mathcal{S}_n(123, 132, 213)$, there are never two consecutive ascents in $\pi$.    This implies that in any parking function avoiding $\{123, 132, 213\}$, every block has size at most 2.

We will give a recursive bijection $f$ between the trees described in the theorem statement and the set of $\{123, 132, 213\}$-avoiding parking functions.  For the trees, we will call an edge containing a leaf an \emph{external edge} and an edge between two internal vertices an \emph{internal edge}.  Given a tree $T$ where every vertex is either a leaf or adjacent to a leaf, let $f(T)$ be the corresponding parking function.  In our bijection, if $T$ has $i$ internal edges then the associated permutation for $f(T)$ has $i$ ascents.

As base cases, we set:

\begin{center}

    $f\left(\raisebox{-0.1in}{\begin{tikzpicture}
\node[circle,draw,minimum size=0.2cm] (1) at (1,1) {};
\node[circle,draw,minimum size=0.2cm] (2) at (1,0) {};
\draw (1)--(2);
\end{tikzpicture}}\right) = \emptyset$

    $f\left(\raisebox{-0.1in}{\begin{tikzpicture}
\node[circle,draw,minimum size=0.2cm] (1) at (1,1) {};
\node[circle,draw,minimum size=0.2cm] (2) at (0.5,0) {};
\node[circle,draw,minimum size=0.2cm] (3) at (1.5,0) {};
\draw (1)--(2);
\draw (1)--(3);
\end{tikzpicture}}\right) = \{1\}$

\end{center}

For the general case, we consider two cases: either the tree has no internal edges or it has at least one internal edge.

If the tree has no internal edges, then every edge has the root as one of its endpoints.  This tree with $n+1$ edges corresponds to the unique parking function associated with the decreasing permutation of length $n$.  (Notice that the $n=0$ and $n=1$ examples of this case are exactly the two base cases shown above.)

Now, consider a tree with at least one internal edge.   Suppose the root is contained in $m$ edges and $\ell$ of them are internal, called $e_1, e_2, \dots, e_\ell$.  There are $\binom{m}{\ell}$ ways these $\ell$ edges can be distributed in order among the $m$ edges containing the root, so let their respective positions be $p_1,\dots, p_\ell$ where $\{p_1,\dots,p_\ell\} \subseteq [m]$. Since $e_1, e_2, \dots, e_\ell$ are internal edges, they each have a rooted ordered tree below them.  Call these trees $t_1,\dots, t_\ell$.

Since we are forming bijection $f$ recursively, each of $t_1,\dots, t_\ell$ corresponds to a pattern-avoiding parking function.  If $t_i$ has $j+1$ edges, then $f(t_i)$ corresponds to a parking function of length $j$.  Accordingly, let $j=\left|t_i\right|$.

To complete the bijection, we consider (a) a parking function $\hat{p}$ corresponding to the set of edges containing the root, and (b) an algorithm to incorporate $\hat{p}$ and the parking functions corresponding to the $f(t_i)$'s into one single parking function $f(T)$. 

For $\hat{p}$, create a parking function with underlying permutation $$\underbrace{12 \ominus \cdots \ominus 12}_{\ell \text{ times}} \ominus \underbrace{1\ominus \cdots \ominus 1}_{m-\ell-1 \text{ times}},$$ where $\ell$ corresponds to the number of internal edges out of the $m$ total edges containing the root of $T$. The parking function $\hat{p}$ will only contain empty blocks after the last ascent bottom.

If $p_1=1$, the first $\ell-1$ ascents appear in blocks of size 2 while the last ascent appears in two blocks of size 1, leaving $m-\ell$ blocks of size 1 after the last ascent bottom.  There are $\binom{m-1}{\ell-1}$ ways to distribute $\ell-1$ empty blocks after the last ascent bottom.

If $p_1>1$, then all ascents appear in blocks of size 2, and there are $m-\ell-1$ blocks of size 1.  There are $\binom{m-1}{\ell}$ ways to distribute $\ell$ empty blocks after the last ascent bottom.

In both cases, the locations of the internal edges among the final $m-1$ edges correspond to the locations of the empty blocks at the end of the parking function.  

As some small examples, here are the simplest trees with one internal edge.  Both of these have one internal edge and one external edge descending from the root so $m=2$ and $\ell=1$.  The first tree has $p_1=1$ while the second has $p_1=2$.  Both correspond to the permutation 12, but the location of the internal edge determines whether there is an empty block and where it is.

\begin{center}
        $f\left(\raisebox{-0.5in}{\begin{tikzpicture}
\node[circle,draw,minimum size=0.2cm] (1) at (1,1) {};
\node[circle,draw,minimum size=0.2cm] (2) at (0.3,0) {};
\node[circle,draw,minimum size=0.2cm] (3) at (0.3,-1) {};
\node[circle,draw,minimum size=0.2cm] (4) at (1.7,0) {};
\draw (1)--(2);
\draw (2)--(3);
\draw (1)--(4);
\end{tikzpicture}}\right) = \{1\} \vert\{2\}$
    
    $f\left(\raisebox{-0.5in}{\begin{tikzpicture}
\node[circle,draw,minimum size=0.2cm] (1) at (1,1) {};
\node[circle,draw,minimum size=0.2cm] (2) at (0.3,0) {};
\node[circle,draw,minimum size=0.2cm] (3) at (1.7,-1) {};
\node[circle,draw,minimum size=0.2cm] (4) at (1.7,0) {};
\draw (1)--(2);
\draw (4)--(3);
\draw (1)--(4);
\end{tikzpicture}}\right) = \{12\}\vert \emptyset$

\end{center}

Finally, we are ready to incorporate trees $t_1,\dots, t_\ell$.  If $T$ has internal edges, we first form a parking function $\hat{p}$ with $\ell$ ascents followed by $m-\ell-1$ entries in decreasing order, with empty blocks as prescribed above.  We also recursively determine parking functions $f(t_1),f(t_2),\dots, f(t_\ell)$ of lengths $\left|t_1\right|,\left|t_2\right|,\dots, \left|t_\ell\right|$ respectively.

Now, increment the digits in the $k$th ascent of $P$ by $\sum_{i=k+1}^{\ell} \left|t_i\right|$.  Also, increment the digits in $f(t_k)$ by $2(\ell-k+1) +(m-\ell-1)+\sum_{i=k+1}^{\ell} \left|t_i\right|$.  Insert this incremented version of $f(t_i)$ immediately before the $k$th ascent of $P$.

\end{proof}

\begin{figure}[hbt]
\begin{center}\begin{tikzpicture}
\node[circle,draw,minimum size=0.2cm] (1) at (1,1) {};
\node[circle,draw,minimum size=0.2cm] (2) at (0.3,0) {};
\node[circle,draw,minimum size=0.2cm] (3) at (1.7,0) {};
\node[circle,draw,minimum size=0.2cm] (4) at (1.7,-1) {};
\node[circle,draw,minimum size=0.2cm] (5) at (1.2,-1) {};
\node[circle,draw,minimum size=0.2cm] (6) at (2.2,-1) {};
\node[circle,draw,minimum size=0.2cm] (7) at (-0.4,0) {};
\node[circle,draw,minimum size=0.2cm] (8) at (2.4,0) {};
\node[circle,draw,minimum size=0.2cm] (9) at (0.1,-1) {};
\node[circle,draw,minimum size=0.2cm] (10) at (0.5,-1) {};
\node[circle,draw,minimum size=0.2cm] (11) at (1.2,-2) {};
\draw (1)--(2);
\draw (1)--(7);
\draw (1)--(8);
\draw (3)--(4);
\draw (3)--(5);
\draw (3)--(6);
\draw (1)--(3);
\draw (2)--(9);
\draw (5)--(11);
\draw (2)--(10);
\end{tikzpicture}
\end{center}
\caption{A rooted ordered tree where every vertex is a leaf or adjacent to a leaf}
\label{F:thm11eg}
\end{figure}

Before moving to a new result, we consider a concrete example of the bijection in the proof of Theorem \ref{T:123_132_213}. Consider the tree shown in Figure \ref{F:thm11eg}.  The root has 4 children so $m=4$ and $\ell=2$ with $p_1=2$ and $p_2=3$.  Therefore, parking function $\hat{p}$, corresponding to the set of edges containing the root has underlying permutation $12\ominus 12 \ominus 1 = 45231$.  Since $p_1>1$, we see that 45 will appear in the same block and 23 will appear in the same block.  There are two empty blocks that must appear after the 23, and since the internal edges are the first two among the last $m-1$ edges descending from the root, so parking function $\hat{p}$ has empty blocks as the first two blocks after $\{2,3\}$, i.e.,  $\hat{p}=\{4,5\}\vert \{2,3\}\vert \emptyset \vert \emptyset \vert \{1\}$.

We know that $f(t_1)=\{1\}$ as a base case above.

We also need to determine $f(t_2) = f\left(\raisebox{-0.5in}{\begin{tikzpicture}
\node[circle,draw,minimum size=0.2cm] (3) at (1.7,0) {};
\node[circle,draw,minimum size=0.2cm] (4) at (1.7,-1) {};
\node[circle,draw,minimum size=0.2cm] (5) at (1.2,-1) {};
\node[circle,draw,minimum size=0.2cm] (6) at (2.2,-1) {};
\node[circle,draw,minimum size=0.2cm] (11) at (1.2,-2) {};
\draw (3)--(4);
\draw (3)--(5);
\draw (3)--(6);
\draw (5)--(11);
\end{tikzpicture}}\right)$.

By a similar process, we see the set of edges containing the root of $t_2$ has $m=3$ and $\ell=1$, and therefore has underlying permutation 231.  Since the internal edge is first, we have 2 and 3 in separate blocks, so $f(t_2) = \{2\}\vert\{3\}\vert \{1\}$.

Now, we increment.  $\left|t_1\right|=1$ and $\left|t_2\right|=3$, so the digits in $f(t_1)$ get incremented by $\left|t_2\right|+2(\ell-1+1) +(m-\ell-1)= 3+2(2)+1$ to become $\{9\}$.  The digits in $f(t_2)$ get incremented by $2(\ell-2+1) +(m-\ell-1)= 2(1)+1$ to become $\{5\}\vert\{6\}\vert \{4\}$.  Similarly, digits 45 in the original generation get incremented by 3 to become 78.

This example tree corresponds to the parking function 
$$\textcolor{blue}{\{9\}}\vert \{7,8\} \vert\textcolor{red}{\{5\}\vert\{6\}\vert \{4\}} \vert\{2,3\}\vert \emptyset \vert \emptyset \vert\{1\}.$$

The blue font highlights $f(t_1)$, the red font highlights $f(t_2)$, and the black font highlights the components of the parking function that come from the set of edges containing the root of the original tree.

\begin{theorem}
\label{T:132_231_312}
$$\mathrm{pf}_n(132,213,231)=\mathrm{pf}_n(132,231,312)=\sum_{i=1}^n C_i.$$
\end{theorem}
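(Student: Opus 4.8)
The plan is to treat each of the two pattern classes by first pinning down the underlying permutations and then counting, for each such permutation $\sigma$, the parking functions $p$ with $\phi(p)=\sigma$. For $\mathcal{S}_n(132,213,231)$ I would locate the value $1$: if $\pi_k=1$, then avoiding $231$ forces the entries before $1$ to be decreasing, avoiding $132$ forces the entries after $1$ to be increasing, and avoiding $213$ forces every entry before $1$ to exceed every entry after it. This pins $\pi$ down to $J_{n-j}\ominus I_j$, where $j$ is the length of the increasing suffix $1,2,\dots,j$, and $j$ ranges over $1,\dots,n$. For $\mathcal{S}_n(132,231,312)$ I would instead locate $n$: the three patterns together forbid $n$ from having entries both before and after it, so $\pi_1=n$ or $\pi_n=n$; the first case collapses to $J_n$ and the second recurses by deleting the trailing $n$, yielding $\pi=J_m\oplus I_{n-m}$ for $m=1,\dots,n$. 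The upshot is that both classes consist entirely of \emph{valley} permutations---a strictly decreasing run followed by a strictly increasing run---and in each class the length $q$ of the maximal increasing suffix (the run that ends the permutation and begins at the global minimum) takes each value in $\{1,\dots,n\}$ exactly once.

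The heart of the argument is a counting lemma: if $\sigma\in\mathcal{S}_n$ is a valley permutation whose maximal increasing suffix has length $q$, then the number of parking functions $p$ with $\phi(p)=\sigma$ is $C_q$. To prove this I would work in block notation, recalling that an ordered tuple of blocks $(B_1,\dots,B_n)$ is a legal parking function exactly when each block is listed increasingly and the partial sums satisfy $\sum_{i=1}^{t}|B_i|\ge t$ for all $t$. Since the decreasing prefix $\sigma_1>\cdots>\sigma_{n-q}$ consists of consecutive descents, its entries must lie in $n-q$ distinct singleton blocks; the partial-sum condition then forbids any empty block from occurring before all of them are placed, so they are forced to occupy columns $1,\dots,n-q$, with the partial sum exactly equal to $n-q$ at that point. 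What remains is to distribute the $q$ increasing suffix entries among the final $q$ columns in order; writing $s_1,\dots,s_q$ for the resulting block sizes, the residual constraint is precisely $\sum_{j=1}^{t}s_j\ge t$ together with $\sum_{j} s_j=q$. These size sequences biject with Dyck paths of semilength $q$ (read $s_j$ north steps followed by one east step), and each determines the parking function uniquely, so the count is $C_q$.

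Finally, I would assemble the pieces: summing the lemma over each class and using that $q$ ranges over $1,\dots,n$ exactly once gives
$$\mathrm{pf}_n(132,213,231)=\mathrm{pf}_n(132,231,312)=\sum_{q=1}^{n}C_q,$$
as claimed. I expect the main obstacle to be the counting lemma rather than the structural classifications: the delicate points are justifying that the decreasing prefix forces a gap-free block of initial singletons (so that the partial sum is tight exactly when the suffix begins) and cleanly reducing the remaining freedom to the Catalan count, all while keeping the block-notation/Dyck-path validity bookkeeping correct. The structural steps, by contrast, are routine pattern-avoidance case analyses, and matching the two suffix-length ranges is immediate once the classifications are in hand.
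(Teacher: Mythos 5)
Your proposal is correct and follows essentially the same route as the paper: both classify the avoiders as valley permutations (a decreasing run followed by an increasing run, with the suffix length taking each value in $\{1,\dots,n\}$ exactly once), force the decreasing prefix into initial singleton blocks, and count the completions of the increasing suffix by $C_q$, summing to $\sum_{i=1}^n C_i$. Your write-up is in fact more careful than the paper's on the one delicate point (the partial-sum argument ruling out early empty blocks and the Dyck-path bijection for the suffix), but the underlying argument is the same.
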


\begin{proof}
Suppose $\pi \in \mathcal{S}_n(132, 213, 231)$.  Since $\pi$ avoids 132 and 231, either $\pi_1=n$ or $\pi_n=n$.  If $\pi_n=n$, then since $\pi$ avoids 213, $\pi=I_n$.  On the other hand, if $\pi_1=n$, we may recursively fill in $\pi_2\cdots \pi_n$ with any member of $\mathcal{S}_{n-1}(132, 213, 231)$, so $\pi=J_k\ominus I_{n-k}$ for some $0 \leq k \leq n$.

Suppose $\pi \in \mathcal{S}_n(132, 231, 312)$.  Since $\pi$ avoids 132 and 231, either $\pi_1=n$ or $\pi_n=n$.  If $\pi_1=n$, then since $\pi$ avoids 312, $\pi = J_n$.  On the other hand, if $\pi_n=n$, we may recursively fill in $\pi_1 \cdots \pi_{n-1}$ with any member of $\mathcal{S}_{n-1}(132, 231, 312)$, so $\pi = J_k \oplus I_{n-k}$ for some $0 \leq k \leq n$.

For each of patterns, the pattern-avoiding permutations we are concerned with consist of $d$ descents followed by $n-1-d$ ascents, and there is one pattern-avoiding permutation for each choice of $0 \leq d \leq n-1$.  The initial $d+1$ digits appear in decreasing order in $\pi$ and thus they must appear in blocks of size 1 at the beginning of any corresponding parking function.  However, the remaining $n-(d-1)$ digits appear in increasing order in $\pi$ and correspond to $C_{n-d+1}$ ways to complete the final $n-d+1$ blocks of the parking function with increasing labels.

Summing over all possible values of $d$, gives $\sum_{d=1}^n C_{n-d+1}$ pattern-avoiding parking functions.
\end{proof}

\begin{theorem}
\label{T:132_213_312}
$$\mathrm{pf}_n(132,213,312)=\mathrm{pf}_n(213,231,312)=C_{n+1}-C_n.$$
\end{theorem}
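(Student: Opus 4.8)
The plan is to reduce both quantities to a single sum over Dyck paths and then evaluate that sum using the Catalan triangle. First I would pin down the two avoidance classes. For $\pi \in \mathcal{S}_n(213,231,312)$, avoiding $213$ and $231$ forces $\pi_1 \in \{1,n\}$ (otherwise some entry has both a smaller and a larger entry to its right, creating one of these two patterns); if $\pi_1=n$ then avoiding $312$ forces $\pi=J_n$, and if $\pi_1=1$ we recurse, so the avoiders are exactly $\{I_k \oplus J_{n-k} : 0 \le k \le n-1\}$. Dually, for $\pi \in \mathcal{S}_n(132,213,312)$ the value $1$ must be either first or last (any entry both before and after $1$ yields a $213$ or a $312$); $\pi_1=1$ forces $\pi=I_n$ and $\pi_n=1$ recurses, giving the avoiders $\{I_k \ominus J_{n-k} : 1 \le k \le n\}$. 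The key observation is that in both families the collection of descent sets that occurs is exactly $\{\{j,j+1,\dots,n-1\} : 1 \le j \le n\}$, each realized once.

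Next I would record that the number of parking functions $p$ with $\phi(p)=\pi$ depends only on the descent set $\mathrm{Des}(\pi)$. Indeed, such a $p$ is a Dyck path together with the labeling obtained by filling its blocks in reading order with $\pi_1,\pi_2,\dots$; this labeling is legal precisely when no block straddles a descent of $\pi$, i.e. when every descent position lies at a block boundary. Hence the count equals the number of Dyck paths of semilength $n$ whose set of block boundaries contains $\mathrm{Des}(\pi)$, a quantity depending only on $\mathrm{Des}(\pi)$. This immediately yields $\mathrm{pf}_n(132,213,312)=\mathrm{pf}_n(213,231,312)$, since the two families share the same multiset of descent sets, and it reduces the problem to evaluating
$$\sum_{j=1}^{n} Q(n,j), \qquad Q(n,j):=\#\{\text{Dyck paths of semilength }n\text{ whose boundaries contain }\{j,\dots,n-1\}\}.$$

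Finally I would evaluate $Q(n,j)$. A height $h<n$ is a block boundary exactly when the path has an east step at height $h$; applying the standard reverse-and-swap involution on Dyck paths turns ``east step at height $h$'' into ``nonempty column at abscissa $n-h$,'' so $Q(n,j)$ equals the number of Dyck paths of semilength $n$ whose first $n-j+1$ columns are all nonempty. A subtract-one-from-each-leading-column argument followed by the reflection principle identifies this with the ballot number $T(n,j-1)=\frac{n-j+2}{n+1}\binom{n+j-1}{j-1}$, the $(j-1)$st entry of Catalan's triangle. Summing and using the row-sum identity $\sum_{k=0}^{n}T(n,k)=C_{n+1}$ together with $T(n,n)=C_n$ gives $\sum_{j=1}^{n}Q(n,j)=\sum_{k=0}^{n-1}T(n,k)=C_{n+1}-C_n$, as claimed. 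The main obstacle is exactly this last evaluation: converting the ``boundaries contain a top interval of heights'' condition into a clean ballot count. I expect the cleanest fully self-contained alternative is a direct bijection between these parking functions and the $C_{n+1}-C_n$ Dyck paths of semilength $n+1$ that begin with two north steps, which would sidestep Catalan's triangle altogether.
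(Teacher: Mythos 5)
Your proposal is correct, and its overall skeleton matches the paper's: you characterize the two permutation classes as $\{I_k \oplus J_{n-k}\}$ and $\{I_k \ominus J_{n-k}\}$ exactly as the paper does, you justify $\mathrm{pf}_n(132,213,312)=\mathrm{pf}_n(213,231,312)$ by the same principle (the number of admissible labeled Dyck paths depends only on the descent set, since each descent must sit at a block boundary), and you finish with the same identity, namely that the row sums of Catalan's triangle are $C_{n+1}$ while the missing diagonal entry is $C_n$. Where you genuinely diverge is the middle step. The paper never computes any entry of the triangle in closed form: it defines $a(n,k)$ as the number of these parking functions with $k-1$ ascents and proves the recurrence $a(n,k)=a(n,k-1)+a(n-1,k)$, $a(n,1)=1$, $a(n,n)=C_n$, by two explicit bijective operations on parking functions (delete a final block $\{k\}$, or relocate the north step labeled $k-1$ together with its preceding east steps to just before the last east step), and then simply recognizes the recurrence as that of A030237. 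You instead evaluate each descent-set class exactly: your reverse-and-swap involution converting ``east step at height $h$'' into ``nonempty column at abscissa $n-h$'' reformulates $Q(n,j)$ as the number of Dyck paths whose first $n-j+1$ columns are nonempty, which you then claim equals the ballot number $\frac{n-j+2}{n+1}\binom{n+j-1}{j-1}$. That claim is true (e.g.\ for $n=3$ it gives $1,3,5$, matching a direct count), but the subtract-one-plus-reflection argument is the one step you only sketch, and it is where the real work of your route lives --- it is precisely the work the paper's bijective recurrence is designed to avoid. In exchange, your approach yields an explicit closed form for each refined count $Q(n,j)$, which the paper's proof never produces, and the column-counting reformulation via the involution is a nice structural observation absent from the paper.
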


\begin{proof}
Suppose $\pi \in \mathcal{S}_n(132, 213, 312)$.  Since $\pi$ avoids 132, all digits before $n$ are larger than all digits after $n$.  Since $\pi$ avoids 213 and 312, all digits before $n$ are in increasing order and all digits after $n$ are in decreasing order.  Combining these properties, $\pi = I_k \ominus J_{n-k}$ for some $0 \leq k \leq n$.  Similarly, any permutation avoiding 213, 231, and 312 has the form $I_{k-1} \oplus J_{n-k+1}$ and thus also begins with an increasing sequence of length $k$. 
In both cases, the pattern-pattern avoiding permutation in question begin with a sequence of $k-1$ ascents followed by $n-k$ descents which implies $\mathrm{pf}_n(132,213,312)=\mathrm{pf}_n(213,231,312)$.

Let $a(n,k)$ be the number of $\{213,231,312\}$-avoiding parking functions of size $n$ with exactly $k-1$ ascents.  This is a refinement of the enumeration we are interested in, and we will prove that $a(n,k)$ is given by triangle A030237 in the On-Line Encyclopedia of Integer Sequences.  In particular, we will show that $a(n,1)=1$ and $a(n,n)=C_n$ for all $n \geq 0$ and for $1<k<n$, $a(n,k)=a(n,k-1)+a(n-1,k)$.

When $k=1$, the unique permutation with no ascents is $J_n$, and there is a unique way to write the decreasing permutation as a parking function, so, indeed $a(n,1)=1$.

Similarly, when $k=n$, the unique permutation with $n-1$ ascents is $I_n$, and there are $C_n$ ways to write the increasing permutation as as a parking function, so, $a(n,n)=C_n$.

Now, consider the situation where $1<k<n$.  $a(n,k)$ counts parking functions corresponding to permutations of the form $I_{k-1} \oplus J_{n-k+1}$, so $a(n,k)$ counts parking functions corresponding to permutations ending in $k$.  We consider two cases: either the last block of a parking function counted by $a(n,k)$ is $\{k\}$ or it is empty.

In the first case, delete the last block.  Then subtract 1 from each digit larger than $k$.  This produces a pattern-avoiding parking function of size $n-1$ ending in $k$, and there are $a(n-1,k)$ such parking functions.

In the second case, consider the Dyck path representation of the parking function and find the north-step labeled $k-1$.  There must be $j$ east-steps between this north-step and the previous north-step where $j \geq 0$.  Remove these east-steps and the north-step labeled $k-1$ (deleting a subpath of the form $E^jN$ and insert them immediately before the last east-step as a $NE^j$ subpath to get a parking function counted by $a(n,k-1)$.  The final empty set block guarantees that this operation still produces a path that stays above $y=x$.

Together these cases show that $a(n,k)=a(n-1,k)+a(n,k-1)$, which is exactly the recurrence that characterizes Catalan's triangle.  Since we partitioned our parking functions according to the longest increasing prefix, we are interested in the row sums of this triangle.  However, we only have $1 \leq k \leq n$, so our rows have $n$ entries and are missing the final entry of $C_n$ in each row of A030237.  Since it is known that the row sums of Catalan's triangle are given by $C_{n+1}$, the missing diagonal guarantees that our row sums are given by $C_{n+1}-C_n$.
\end{proof}

We present a few examples to illustrate the recurrence in the proof of Theorem \ref{T:132_213_312}.

Consider $\{1,2\}\vert\{3\}\vert\emptyset \vert\{5\}\vert\{4\}$, which is counted by $a(5,4)$.  If we delete the final block and then subtract 1 from each entry larger than 4, we obtain $\{1,2\}\vert\{3\}\vert\emptyset \vert\{4\}$ which is counted by $a(4,4)$, as desired.

On the other hand, consider $\{1,2\}\vert\{3\}\vert\{5\}\vert\{4\}\vert\emptyset$, which is also counted by $a(5,4)$.  Since the final block is empty, we consider the Dyck path corresponding to this parking function, which has the form NN\textbf{EN}ENENEE.  The third north-step is the one labeled with $k-1=3$, and it is preceded by one east-step.  These steps are highlighted with bold font above.  We move them prior to the final east-step to get NNENENE\textbf{NE}E.  Since the label 3 remains on the bolded north-step, this corresponds to the parking function $\{1,2\}\vert\{5\}\vert\{4\}\vert\{3\}\vert\emptyset$, which is counted by $a(5,3)$

Similarly, consider $\{1,2\}\vert\emptyset\vert\{3,5\}\vert\{4\}\vert\emptyset$, which is also counted by $a(5,4)$.  Since the final block is empty, we again consider the Dyck path corresponding to this parking function, which has the form NN\textbf{EEN}NENEE.  The third north step is the one labeled with $k-1=3$, and it is preceded by two east-steps.  Deleting these three steps and inserting them in reverse order before the final E gives NNNENE\textbf{NEE}E, which corresponds to the parking function $\{1,2,5\}\vert\{4\}\vert\{3\}\vert\emptyset\vert\emptyset$, counted by $a(5,3)$.

\begin{theorem}
\label{T:132_231_321}
$$\mathrm{pf}_n(132,231,321)=C_n+(n-1)C_{n-1}.$$
\end{theorem}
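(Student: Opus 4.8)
The plan is to follow the two-stage strategy used throughout this section: first pin down the permutations in $\mathcal{S}_n(132,231,321)$, then count the parking functions lying over each one. For the first stage, I would note that both $132$ and $231$ place their largest entry in the middle, so $n$ cannot occupy an interior position of $\pi$: if $\pi_k=n$ with $1<k<n$, then for any $i<k<j$ the triple $\pi_i\pi_k\pi_j$ is an occurrence of $132$ (when $\pi_i<\pi_j$) or $231$ (when $\pi_i>\pi_j$). Hence $\pi_1=n$ or $\pi_n=n$. If $\pi_1=n$, avoiding $321$ forbids any descent among $\pi_2\cdots\pi_n$ (such a descent together with the leading $n$ would be a $321$), so $\pi=n\,1\,2\cdots(n-1)=1\ominus I_{n-1}$. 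If $\pi_n=n$, then $\pi_1\cdots\pi_{n-1}$ again avoids all three patterns, so we may recurse on the first $n-1$ entries.

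Unwinding this recursion yields the explicit list $\mathcal{S}_n(132,231,321)=\{I_n\}\cup\{\pi^{(m)}:2\le m\le n\}$, where $\pi^{(m)}=(1\ominus I_{m-1})\oplus I_{n-m}=m\,1\,2\cdots(m-1)\,(m+1)\cdots n$. The key structural feature I would extract is that each $\pi^{(m)}$ has exactly one descent, located between positions $1$ and $2$, while $I_n$ has none; this descent structure is precisely what controls the parking-function count, since within any column (block) the labels must increase, so a descent forces a column break while an ascent does not.

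For the second stage, I would count parking functions over each permutation using this block analysis. The increasing permutation $I_n$ contributes $C_n$ parking functions, as its labels can be read off any of the $C_n$ Dyck paths. For $\pi^{(m)}$, the single leading descent forces the first block to be exactly $\{m\}$: it has size at least $1$ by the parking (prefix) condition and at most $1$ because of the descent. After this, the remaining $n-1$ entries $1,2,\ldots,m-1,m+1,\ldots,n$ appear in increasing order and are distributed among the remaining columns. I would show that the admissible block-size sequences for these columns are exactly the height sequences of Dyck paths of semilength $n-1$: once the forced leading $NE$ is removed, the prefix inequalities defining a valid parking function of size $n$ translate verbatim into those defining a Dyck path of semilength $n-1$. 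Hence each $\pi^{(m)}$ contributes $C_{n-1}$ parking functions—equivalently, deleting the block $\{m\}$ and relabeling the rest reduces the count to parking functions over $I_{n-1}$, which is $C_{n-1}$.

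Summing over the $n-1$ values of $m$ together with the $I_n$ term gives $\mathrm{pf}_n(132,231,321)=C_n+(n-1)C_{n-1}$, which I would cross-check against the data $1,3,9,29,98,342$. The main obstacle, and the step I would write most carefully, is this reduction showing that each $\pi^{(m)}$ yields exactly $C_{n-1}$ parking functions: one must verify both that the leading block is forced to be the singleton $\{m\}$ and that, after this forced first step, the remaining prefix inequalities coincide exactly with those governing Dyck paths of semilength $n-1$, so that no configurations are over- or under-counted.
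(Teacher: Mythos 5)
Your proposal is correct and follows essentially the same route as the paper's proof: the same characterization of $\mathcal{S}_n(132,231,321)$ as $\{I_n\}\cup\{(1\ominus I_{m-1})\oplus I_{n-m}\}$ via the "$n$ is first or last" dichotomy, followed by the same count of $C_n$ parking functions over $I_n$ and $C_{n-1}$ over each single-descent permutation (the descent forcing the first digit into its own block, with the remaining increasing digits distributed in $C_{n-1}$ ways). Your write-up is somewhat more explicit about why the reduction to semilength-$(n-1)$ Dyck paths is exact, but the argument is the same.
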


\begin{proof}
Suppose $\pi \in \mathcal{S}_n(132, 231, 321)$.  Since $\pi$ avoids 132 and 231, either $\pi_1=n$ or $\pi_n=n$.  In the first case, since $\pi$ avoids 321, $\pi=1 \ominus I_{n-1}$.  On the other hand if $\pi_n=n$, we may recursively fill in the digits $\pi_1\cdots \pi_{n-1}$ with any member of $\mathcal{S}_{n-1}(132, 231, 321)$.  Together these results tell us that $\pi=\left(1 \ominus I_{k-1}\right)\oplus I_{n-k}$, where $1 \leq k \leq n$.

In the situation where $k=1$ (i.e. where $\pi=I_n$), there are $C_n$ possible parking functions.

On the other hand, if $k>1$, then $\pi$ begins with a descent, so the first digit must appear in its own block.  However, the remaining $n-1$ digits of $\pi$ appear in increasing order and must be distributed among $n-1$ blocks in one of $C_{n-1}$ possible ways.  Since there are $(n-1)$ possible values for $k$ when $k>1$, we get $(n-1)C_{n-1}$ parking functions in this case.
\end{proof}

\begin{theorem}
\label{T:132_213_321}
$$\mathrm{pf}_n(132,213,321)=\mathrm{pf}_n(132,312,321)=\mathrm{pf}_n(213, 231,321)=\binom{2n-1}{n}.$$
\end{theorem}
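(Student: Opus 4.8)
The plan is to reduce all three enumerations to a single count of Dyck paths, using the observation that the number of parking functions with a prescribed underlying permutation $\pi$ depends only on the \emph{positions} of the descents of $\pi$. Reading the labels of a parking function column by column forces each block to be increasing, so a block boundary must occur at every descent of $\pi$, while the ascents of $\pi$ may be split between blocks freely. Consequently, if $\pi$ has a single descent at position $h$, then the parking functions $p$ with $\phi(p)=\pi$ are in bijection with the Dyck paths with $n$ north-steps in which the $h$th and $(h+1)$st north-steps lie in different columns; write $d_h$ for the number of such paths. Likewise $\phi(p)=I_n$ holds for exactly one labeling of each of the $C_n$ Dyck paths, and these account for $C_n$ parking functions.

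First I would prove the structural claim that each of the three classes consists of $I_n$ together with exactly one permutation whose unique descent lies at position $h$, for each $h=1,\dots,n-1$. The cleanest route locates the value $1$, say $\pi_p=1$. For $\mathcal{S}_n(132,213,321)$, avoidance of $213$ (with $1$ as the middle entry) forces every entry before position $p$ to exceed every entry after it, so the entries preceding $1$ are the $p-1$ largest values; avoidance of $321$ then forces both the prefix and the suffix to be increasing, since a descent in either—paired with the value $1$ or with a larger leading entry—would complete a $321$, giving $\pi=I_{p-1}\ominus I_{n-p+1}$, while $p=1$ is handled by $132$-avoidance and yields $I_n$. For $\mathcal{S}_n(132,312,321)$ the same idea with $312$ in place of $213$ reverses the inequality, so the entries before $1$ are the $p-1$ smallest values, and $321$- and $132$-avoidance force the two runs to increase, yielding $\pi=(2,3,\dots,p,1,p+1,\dots,n)$ and again $I_n$ when $p=1$. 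For $\mathcal{S}_n(213,231,321)$ I would argue instead, as in the earlier theorems, that avoiding $213$ and $231$ makes the first entry of every suffix either its minimum or its maximum, and that $321$-avoidance then permits the maximum to be chosen at most once, producing $I_n$ and the permutations $I_{h-1}\oplus(1\ominus I_{n-h})$. In every case the non-identity members have a single descent, exactly one at each position $h\in\{1,\dots,n-1\}$.

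With the characterization in hand, the three enumerations coincide automatically, because the parking-function count attached to a single-descent permutation depends only on the descent position, so each class contributes
\[
C_n+\sum_{h=1}^{n-1} d_h .
\]
I would evaluate this by exchanging the order of summation over Dyck paths. A fixed Dyck path $P$ with $c$ nonempty columns exhibits a column change between consecutive north-steps exactly $c-1$ times, so it is counted once by the $I_n$ term and once by $d_h$ for each of those $c-1$ positions $h$, contributing $1+(c-1)=c$ in total. Since the number of nonempty columns of a Dyck path equals its number of peaks, the grand total is the number of peaks summed over all Dyck paths with $n$ north-steps. This is the classical Narayana sum $\sum_{k}k\,N(n,k)=\binom{2n-1}{n-1}=\binom{2n-1}{n}$, where $N(n,k)=\frac1n\binom nk\binom n{k-1}$ counts Dyck paths with $k$ peaks, which finishes the proof.

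The main obstacle is the structural step: verifying for each of the three pattern sets that avoidance pins the permutations down to $I_n$ together with exactly one single-descent permutation per position, and in particular ruling out permutations with two or more descents. Once that is established, the reduction to descent positions makes the three counts identical with no further work, and the only external ingredient is the standard evaluation of the total number of peaks over all Dyck paths.
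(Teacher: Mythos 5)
Your proof is correct, and while its structural half matches the paper's, the enumeration is genuinely different. The paper establishes the same characterization (each of the three classes is $I_n$ together with exactly one single-descent permutation per descent position, so the three counts agree automatically), but it then counts directly: fixing the descent position $k$, it marks the north step labeled $n$ and the east step immediately following it, decomposes the Dyck path around this corner according to the number $i$ of east steps preceding the marked north step, and evaluates
$$\sum_{k=1}^{n}\sum_{i=0}^{k-1}C_iC_{n-i-1}=\binom{2n-1}{n}$$
with computer-algebra assistance. You instead interchange the order of summation: a fixed Dyck path with $c$ peaks (equivalently, $c$ nonempty columns) is counted once by the $I_n$ term and once by your $d_h$ for each of its $c-1$ internal column boundaries, so the grand total is the number of peaks summed over all Dyck paths of semilength $n$, i.e., the Narayana sum $\sum_k k\,N(n,k)=\binom{2n-1}{n-1}=\binom{2n-1}{n}$. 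Your route buys two things: it eliminates the CAS step, replacing it with a classical closed-form identity, and it isolates a reusable lemma that the paper uses only implicitly --- the number of parking functions lying over a given permutation depends only on that permutation's descent set, and equals the number of Dyck paths whose column boundaries contain that set. The paper's version, in exchange, needs no external fact about Narayana numbers, only a Catalan convolution it simplifies by machine; your argument shifts that burden onto a standard peak-counting identity. Both are complete proofs, and your structural derivations (locating the value $1$ for the first two classes, and the min/max suffix argument for the third) are sound and in fact supply details the paper merely asserts.
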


\begin{proof}
If $\pi \in \mathcal{S}_n(132, 213, 321)$ then $\pi=I_{k} \ominus I_{n-k}$.  If $\pi \in \mathcal{S}_n(132,312,321)$ then $\pi=\left(I_{k}\ominus 1\right) \oplus I_{n-k-1}$.  If $\pi \in \mathcal{S}_n(213, 231,321)$ then $\pi=I_{k-1} \oplus \left(1 \ominus I_{n-k}\right)$.  In all three situations, we consider permutations with at most 1 descent.  Further, the location of the descent uniquely describes the permutation.  For this reason, we know
$$\mathrm{pf}_n(132,213,321)=\mathrm{pf}_n(132,312,321)=\mathrm{pf}_n(213, 231,321),$$ and it remains to enumerate one of these directly.

Let $\pi \in \mathcal{S}_n(132, 213, 321)$. We know that $\pi = I_{k} \ominus I_{n-k}$ for some $k$, where $1 \leq k \leq n$.  Now, we consider the Dyck path representation of a parking function that may be associated with $\pi$.  Since there is a descent immediately after the digit $n$ in $\pi$, we know the north step labeled with $n$ is immediately followed by an east step. Call this north step $N^*$ and the subsequent east step $E^*$.   Suppose that there are $i$ east steps before $N^*$ where $0 \leq i \leq k-1$.  There are $C_i$ ways to arrange these east steps with their corresponding parentheses-matching north steps.  Similarly, there are $n-1-i$ east steps after $E^*$ and there are $C_{n-i-1}$ ways to arrange these east steps with their corresponding parentheses-matching north steps (some of which might appear prior to $N^*$).

Summing over all possible values of $i$ and $k$ and simplifying with a CAS gives

$$\sum_{k=1}^n \sum_{i=0}^{k-1} C_i C_{n-i-1} = \binom{2n-1}{n}.$$

\end{proof}

\begin{theorem}
\label{T:213_312_321}
$$\mathrm{pf}_n(213,312,321)= C_n+(n-1)(C_n-C_{n-1})=nC_n-(n-1)C_{n-1}.$$
\end{theorem}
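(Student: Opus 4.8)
The plan is to pin down $\mathcal{S}_n(213,312,321)$ exactly and then, for each such permutation $\pi$, count the parking functions $p$ with $\phi(p)=\pi$ by exploiting that this count depends only on the descent set of $\pi$. First I would locate the value $1$ in $\pi$, say $\pi_m=1$. Avoiding $213$ forces every entry to the left of position $m$ to exceed every entry to its right, since otherwise a pair $\pi_i<\pi_k$ with $i<m<k$ yields the $213$ pattern $\pi_i\,1\,\pi_k$. Avoiding $312$ then prevents $1$ from having entries on both sides, because a left entry (necessarily larger than any right entry) together with $1$ and a right entry forms a $312$; hence $1$ is first or last. If $1$ is last, avoiding $321$ forces the initial entries to increase, so $\pi=I_{n-1}\ominus 1$; if $1$ is first, then $\pi=1\oplus\pi'$ with $\pi'$ a shorter avoider and I recurse. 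Unrolling gives $\pi=I_{k-1}\oplus\left(I_{n-k}\ominus 1\right)$ for $1\le k\le n$, so there are exactly $n$ avoiders: the increasing permutation $I_n$, and $n-1$ permutations each with a single descent, always located at position $n-1$.

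The counting step rests on one observation. In a labeled Dyck path the labels strictly increase within each column, and $\phi(p)$ reads the labels column by column, so a Dyck path is compatible with a target permutation $\pi$ precisely when each of its columns spans an increasing consecutive block of $\pi$; equivalently, every descent of $\pi$ must occur at a column boundary while ascents may or may not. Consequently the number of parking functions with $\phi(p)=\pi$ depends only on the descent set of $\pi$ and equals the number of size-$n$ Dyck paths whose columns cut at each descent position. For $I_n$ this imposes no constraint and yields $C_n$ parking functions. For each of the $n-1$ one-descent avoiders the descent set is $\{n-1\}$, so the relevant count is the number of size-$n$ Dyck paths that place the last two north-steps in reading order (positions $n-1$ and $n$) in different columns. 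By complementation this equals $C_n$ minus the number of Dyck paths placing those two north-steps in a common column, and merging such a pair into a single north-step bijects these with size-$(n-1)$ Dyck paths---exactly the device used in the proof of Theorem \ref{T:fourc}---so there are $C_{n-1}$ of them. Thus each one-descent avoider contributes $C_n-C_{n-1}$, independently of the choice of avoider, and summing gives $\mathrm{pf}_n(213,312,321)=C_n+(n-1)(C_n-C_{n-1})=nC_n-(n-1)C_{n-1}$.

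The main obstacle is the principle of the second paragraph together with the assertion that all $n-1$ one-descent avoiders contribute equally: both depend on recognizing that the parking-function count for a fixed $\pi$ is sensitive only to the positions of $\pi$'s descents and not to the underlying values, which is what lets a single computation cover every one-descent permutation at once. Once that reduction is secured, the only quantitative input is the value $C_n-C_{n-1}$ for a lone terminal descent, which follows from the merging bijection already in hand.
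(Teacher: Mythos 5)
Your proposal is correct and takes essentially the same approach as the paper: the identical characterization $\pi = I_{k-1}\oplus\left(I_{n-k}\ominus 1\right)$ for $1\le k\le n$, followed by the identical count of $C_n-C_{n-1}$ parking functions for each of the $n-1$ one-descent permutations, obtained by subtracting from the $C_n$ parking functions of $I_n$ the $C_{n-1}$ ones in which the top two labels share a block (the same merging device as in Theorem \ref{T:fourc}). Your ``the count depends only on the descent set'' principle is simply a more general packaging of the paper's explicit relabeling bijection onto $I_n$-parking functions with $n-1$ and $n$ in separate blocks; the underlying observation is the same.
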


\begin{proof}
Suppose $\pi \in S_n(213, 312, 321)$. Since $\pi$ avoids 312 and 321, $\pi_1 =1$ or $\pi_1=2$; otherwise $\pi_1$, 1, and 2 would form either a 312 or a 321 pattern.  If $\pi_1=2$, since $\pi$ avoids 213, we know $\pi_n=1$.  Now, since $\pi_n=1$ and $\pi$ avoids 321, it must be the case that $\pi=I_{n-1} \ominus 1$.  On the other hand if $\pi_1=1$, we may recursively fill in $\pi_2\cdots \pi_n$ with any member of $S_{n-1}(213, 312, 321)$ (with each of its digits incremented by 1.  Therefore, if $\pi \in S_n(213, 312, 321)$, then $\pi = I_{k-1} \oplus \left(I_{n-k} \ominus 1\right)$, where $1 \leq k \leq n$.

When $k=n$, $\pi = I_n$ and there are $C_n$ parking functions associated with $\pi$.  When $k<n$, we count the permutations associated with $I_n$ where $n-1$ and $n$ appear in separate blocks.  We can transform any such parking function into a parking function associated to $I_{k-1} \oplus \left(I_{n-k} \ominus 1\right)$ by relabeling $n$ (in its own block) as $k$, and by relabeling every digit $i$ with $k \leq i \leq n-1$ as $i+1$. This transformation can be easily reversed.

Thus, to count the parking functions associated with $I_n$ where $n-1$ and $n$ appear in separate blocks, we need only subtract the parking functions where $n-1$ and $n$ appear in the same block, which is given by $C_{n-1}$, to get a total of $C_n-C_{n-1}$ such parking functions. Summing over all possible values for $k<n$ gives $(n-1)(C_n-C_{n-1})$ parking functions.  Combining this with the $k=n$ case proves the theorem.
\end{proof}

\begin{theorem}
\label{T:231_312_321}
$$\mathrm{pf}_n(231,312,321)=\sum_{k=0}^{\lfloor\frac{n}{2}\rfloor} \frac{1}{n+1}\binom{2n-k}{n+k}\binom{n+k}{k}.$$
\end{theorem}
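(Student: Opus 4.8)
The plan is to first determine which permutations can occur, then encode the parking‑function count in a bivariate generating function and extract its coefficients by Lagrange inversion. First I would characterize $\mathcal{S}_n(231,312,321)$. Arguing as in the earlier theorems: if $\pi$ avoids $312$ and $321$ then at most one entry can follow $n$ (two entries after $n$ would have to be decreasing to avoid $312$, and then with $n$ they would form $321$), so $n$ sits in the last or second‑to‑last position. If $\pi_n=n$ then $\pi=\pi'\oplus 1$; if $\pi_{n-1}=n$ then $231$‑avoidance forces $\pi_n=n-1$, so $\pi=\pi''\oplus J_2$. Recursing, every such $\pi$ is a direct sum $b_1\oplus\cdots\oplus b_m$ whose blocks are each $1$ or $J_2=21$. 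Hence $\pi$ is determined by its descent set (an arbitrary subset of $\{1,\dots,n-1\}$ with no two consecutive elements), and if exactly $k$ blocks equal $21$ then $\pi$ has exactly $k$ descents. As a sanity check, $k=0$ gives $\pi=I_n$, which by the argument behind $\mathrm{pf}_n(21)=C_n$ accounts for $C_n$ parking functions.

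Next I would set up the count. For a fixed such $\pi$ with descent set $D$, a parking function $p$ with $\phi(p)=\pi$ is exactly a Dyck shape compatible with $D$: since reading the north‑step labels must reproduce $\pi$ and labels increase within a column, a column break (at least one east step) is forced at each descent and is optional at each ascent. Writing $e_j\ge 0$ for the number of east steps after the $j$th north step, these parking functions are the sequences $(e_1,\dots,e_n)$ with $\sum_j e_j=n$, all partial sums $\sum_{i\le j}e_i\le j$, and $e_j\ge 1$ for $j\in D$. Let $g_{n,k}$ be the number of such parking functions whose underlying permutation has exactly $k$ descents and set $P(x,t)=\sum_{n,k}g_{n,k}x^nt^k$; then $\mathrm{pf}_n(231,312,321)=[x^n]P(x,1)=\sum_k g_{n,k}$, so it remains to evaluate $g_{n,k}$.

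The engine of the argument is the functional equation
$$P(x,t)=1+xP(x,t)^2+t\,x^2P(x,t)^3,$$
which I would prove by a first‑return–type decomposition of these parking functions extending the Catalan decomposition that already governs the $t=0$ case $P(x,0)=1+xP(x,0)^2$: a nonempty object is built from a leading piece that is either a single entry, an $x$ splitting the remainder into two independent sub‑objects ($xP^2$), or a $21$ descent block, a marked $tx^2$ splitting the remainder into three independent sub‑objects ($tx^2P^3$). Granting the equation, the substitution $y=xP$ turns it into $P=(1-y-ty^2)^{-1}$ together with $x=y-y^2-ty^3$, so that $y=x\phi(y)$ with $\phi(y)=(1-y-ty^2)^{-1}$ and $P=y/x$. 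Lagrange inversion then gives
$$g_{n,k}=[x^nt^k]P=[x^{n+1}t^k]\,y=\frac{1}{n+1}\,[y^nt^k]\,(1-y-ty^2)^{-(n+1)}=\frac{1}{n+1}\binom{2n-k}{n-k}\binom{n-k}{k},$$
where one expands $(1-y-ty^2)^{-(n+1)}=\sum_{j}\binom{n+j}{j}y^j(1+ty)^j$ and selects $j=n-k$. Since $\binom{2n-k}{n-k}\binom{n-k}{k}=\binom{2n-k}{n+k}\binom{n+k}{k}$ (both equal $\frac{(2n-k)!}{n!\,k!\,(n-2k)!}$), summing over $k$ yields the stated formula; setting $t=1$ recovers $A=1+xA^2+x^2A^3$, the equation for OEIS A001002.

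The main obstacle is justifying the functional equation. Because $\phi(p)$ is read off globally, the direct‑sum structure of $\pi$ need not align with the first‑return decomposition of the Dyck path, so the clean split into $xP^2$ and $tx^2P^3$ pieces requires care in tracking how a forced descent break is distributed among the returning arches. A route that sidesteps the decomposition is to evaluate $g_{n,k}=\sum_{D}\#\{(e_j)\}$ directly: the factor $\tfrac{1}{n+1}$ is the signature of the cycle lemma, and a cycle‑lemma (or reflection) argument applied to the gap sequences $(e_j)$ with the non‑adjacent forced positions $D$ should produce the product $\frac{1}{n+1}\binom{2n-k}{n+k}\binom{n+k}{k}$ as a single quantity (the individual factors are not integral, so the count cannot split as independent choices). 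Either way, once $g_{n,k}$ is in hand the summation over $k$ finishes the proof.
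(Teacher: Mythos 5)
Your structural analysis is correct and your algebra checks out: the avoiders of $\{231,312,321\}$ are exactly the direct sums of blocks $1$ and $21$, the parking functions lying over such a permutation are exactly the Dyck paths with a forced east step after each descent position, and, \emph{granting} the functional equation $P(x,t)=1+xP(x,t)^2+tx^2P(x,t)^3$, your Lagrange-inversion computation is correct, including the identity $\binom{2n-k}{n-k}\binom{n-k}{k}=\binom{2n-k}{n+k}\binom{n+k}{k}$ and the restriction $k\le\lfloor n/2\rfloor$. But the proof has a genuine gap exactly where you flag it: the functional equation is the combinatorial heart of the argument, and it is never established; neither of your two fallback sketches is carried out. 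The first-return decomposition does not split cleanly, for the reason you yourself identify: if the path first returns to the diagonal after $a+1$ north steps, a $21$ block of $\pi$ may straddle positions $a+1$ and $a+2$, in which case the arch carries the labels $\{1,\dots,a\}\cup\{a+2\}$ while the remainder carries $\{a+1\}\cup\{a+3,\dots,n\}$, so the two pieces are not independent objects of the same class; making this work requires auxiliary series tracking whether a factor ends (resp.\ begins) with a singleton block, which is precisely the ``care'' you defer. The cycle-lemma alternative is also not routine: after substituting $e_j'=e_j-[j\in D]$ the staircase boundary $\sum_{i\le j}e_i'\le j-|D\cap[j]|$ depends on $D$, and one must then sum over non-adjacent sets $D$ of size $k$, so the factor $\frac{1}{n+1}$ does not fall out of a single cyclic-shift argument without additional work. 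What you have, then, is a correct reduction of the theorem to an unproven (though true --- it is consistent with all small cases) combinatorial claim, not a proof.

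For comparison, the paper supplies exactly the missing ingredient by a different route: it builds a bijection between these parking functions and dissections of a convex $(n+2)$-gon into triangles and quadrilaterals by nonintersecting diagonals, with quadrilaterals corresponding to the $21$ blocks, and then quotes Motzkin's enumeration of such dissections. Since dissections satisfy $P=1+xP^2+tx^2P^3$ immediately (remove the face containing the root edge: a triangle leaves two dissections, a quadrilateral leaves three), the paper's bijection is in effect a combinatorial proof of your functional equation. Conversely, if you completed either of your sketches, your argument would be self-contained where the paper's leans on the known dissection count --- but as submitted, the key step is missing.
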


\begin{proof}
The expression given above is known to be the number of dissections of a convex (n+2)-gon into triangles and quadrilaterals by nonintersecting diagonals (see OEIS A001002 and \cite{M48}).  We will prove this theorem by giving a bijection between $\{231,312,321\}$-avoiding parking functions and these disections.

First, notice that the number of parking functions corresponding to the increasing permutation is given by the $n$th Catalan number, as is the number of dissections of an $(n+2)$-gon into triangles.  We first give a bijection between these two special cases of the sets in question.

The orientation of the polygon partitions matters; in other words, each polygon can be considered to have a specific distinguished edge along its perimeter.  Call the triangle that uses this edge the distinguished triangle.  When $n=1$, the polygon is a triangle.  This corresponds to the one parking function of size 1.

When $n>1$, we consider 3 cases: 
\begin{enumerate}[(i)]
\item The distinguished edge is the first of two consecutive distinguished triangle edges reading clockwise around the perimeter of the polygon.
\item The distinguished edge is the second of two consecutive distinguished triangle edges reading clockwise around the perimeter of the polygon.
\item the distinguished edge is the only edge of the distinguished triangle on the perimeter of the polygon.
\end{enumerate}
This orientation of the distinguished triangle tells us about the relationship between $n$ and $n-1$ in the corresponding parking function.  

In case (i), $n-1$ and $n$ appear in different but consecutive blocks.  We may remove the distinguished triangle to obtain a triangulation of an $(n+1)$-gon.  The edge that was formerly part of the distinguished triangle is now the distinguished edge and we proceed recursively.

In case (ii), $n-1$ and $n$ appear in the same block, and an empty block appears at the end of the parking function.  We may remove the distinguished triangle to obtain a triangulation of an $(n+1)$-gon.  The edge that was formerly part of the distinguished triangle is now the distinguished edge and we proceed recursively.

Notice that in a triangulation with only case (i) and case (ii) triangles, the parking function has all of the digits $1,\dots,n$ in a sequence of consecutive blocks, so all empty blocks come after all non-empty blocks.

In case (iii), $n-1$ and $n$ appear in different blocks with at least one empty block between them.  We may remove the distinguished triangle to obtain a two smaller triangulated polygons -- one to the left of the distinguished triangle and one to the right.  For each of these smaller polygons, the edge that was formerly part of the distinguished triangle is now the distinguished edge and we proceed recursively. Call the parking function corresponding to the right polygon $P_1$ and the parking function corresponding to the left polygon $P_2$.  If there are $k$ digits in $P_1$, add $k$ to every digit in $P_2$ so that it uses the digits $k+1, \dots, n-1$.  $P_1$ can be partitioned into $P_1^{\prime}$ and $P_1^{\prime\prime}$ where $P_1^{\prime}$ is the initial blocks of $P_1$, ending with the last non-empty block, and $P_1^{\prime \prime}$ consists of any empty blocks at the end of $P_1$.  We construct the larger parking function as follows: Take all blocks of $P_1^{\prime}$ and merge the last block of $P_1^{\prime}$ with the first block of $P_2$, follow with all of $P_2$, an empty block, a block containing $n$, and then $P_1^{\prime \prime}$.  In other words the empty block between $n-1$ and $n$ arises because $k$ and $k+1$ appear in the same block earlier in the parking function.

Now that we have a bijection between triangulations of the $(n+2)$-gon and parking functions corresponding to the increasing permutation, we address the more general situation of the theorem.

First observe that permutations that avoid 231, 312, and 321 are by definition direct sums of 1 and 21 permutations.  We claim that the number of quadrilaterals in a partition of an $(n+2)$-gon corresponds to the number of consecutive 21 patterns in the underlying permutation of the corresponding parking function.  In particular, consider a partition of an $(n+2)$-gon into triangles and quadrilaterals.  There is a unique way to partition each quadrilateral into two triangles so that we always see case (i) and case (iii) triangles arise from the quadrilateral.  In particular, when we arrive at the quadrilateral in our recursive reading, draw the diagonal that uses the left endpoint of the distinguished edge on the perimeter of the polygon.  Now, find the parking function that corresponds to the triangulation.  By construction, the two digits used in any quadrilateral appear in separate blocks, and so we may transpose them from a 12 pattern into a 21 pattern while still having a legal parking function.

\end{proof}

\section{Avoiding two patterns}

There are $\binom{6}{2}=15$ distinct pairs of patterns of length 2. The pair $\{123,321\}$ is already addressed by the Erd\H{o}s-Szekeres Theorem.  From the remaining 14 pairs of patterns, we get 11 different enumeration sequences.  A summary of results is shown in Table \ref{tab:two_patterns}.

\begin{table}[hbt]
    \centering
    \begin{tabular}{|c|c|c|c|}
    \hline
    Patterns $P$&$\mathrm{pf}_n(P)$, $1 \leq n \leq 6$&OEIS&Result\\
    \hline
123, 231&1, 3, 8, 17, 31, 51&A105163&Theorem \ref{T:123_231}\\
\hline
123, 312&1, 3, 9, 21, 41, 71&A064999&Theorem \ref{T:123_312}\\
\hline
123, 132&1, 3, 8, 24, 75, 243&A000958 (conjectured)&Theorem \ref{T:123_132}\\
\hline
123, 213&1, 3, 9, 28, 90, 297&A000245&Theorem \ref{T:123_213}\\
\hline
132, 231&1, 3, 10, 36, 137, 543&A002212&Theorem \ref{T:132_231}\\
\hline
132, 213&\multirow{4}{*}{1, 3, 11, 45, 197, 903}&\multirow{4}{*}{A001003}&\multirow{4}{*}{Theorem \ref{T:132_213}}\\
132, 312&&&\\
213, 231&&&\\
231, 312&&&\\
\hline
132, 321&1, 3, 12, 52, 229, 1006&new&Theorem \ref{T:132_321}\\
\hline
213, 321&1, 3, 13, 60, 275, 1238&new&Theorem \ref{T:213_321}\\
\hline
213, 312&1, 3, 12, 54, 259, 1293&new&Theorem \ref{T:213_312}\\
\hline
231, 321&1, 3, 12, 55, 273, 1428&A001764&Theorem \ref{T:231_321}\\
\hline
312, 321&1, 3, 13, 63, 324, 1736&new&Theorem \ref{T:312_321}\\
\hline
    \end{tabular}
    \caption{\textit{Enumeration data for parking functions avoiding a pair of patterns of length 3}}
    \label{tab:two_patterns}
\end{table}

\begin{theorem}
\label{T:123_231}
$$\mathrm{pf}_n(123,231)=\binom{n+1}{3}+\binom{n}{2}+1.$$
\end{theorem}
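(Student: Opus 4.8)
The plan is to follow the two-step template used throughout this section: first describe the permutations in $\mathcal{S}_n(123,231)$, and then count the parking functions attached to each. For the first step, I would locate the largest entry $n$, say $\pi_k=n$. Avoiding $231$ forces every entry before $n$ to be smaller than every entry after $n$, and avoiding $123$ then forces the entries before $n$ to be decreasing and, whenever at least one entry precedes $n$, forces the entries after $n$ to be decreasing as well. When $\pi_1=n$ this gives the recursive form $\pi=1\ominus\pi'$ with $\pi'\in\mathcal{S}_{n-1}(123,231)$, and when $n$ is internal it gives an explicit permutation consisting of a decreasing run, a single ascent, and a second decreasing run. Unfolding the recursion, I would prove that every element of $\mathcal{S}_n(123,231)$ is either $J_n$ or has exactly one ascent, that the entry at the bottom of that ascent is always $1$, and that the entries of the second decreasing run form a contiguous block of values. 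Counting the admissible value-intervals for the second run then shows that, for each position $p$ with $1\le p\le n-1$, there are exactly $p$ permutations whose unique ascent is at position $p$.

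For the second step, I would use the fact that the number of parking functions with a prescribed underlying permutation depends only on its descent set: since the labels within a block must increase, every descent forces a block boundary while each ascent is free to be a boundary or not. Fix a permutation with its unique ascent at position $p$. Its first $p$ entries are decreasing and hence occupy $p$ singleton blocks, and its last $n-p$ entries are likewise decreasing. Either the ascent pair $\pi_p,\pi_{p+1}$ is split across two blocks, which forces every block to have size $1$ and yields a single parking function, or the pair is merged into one block of size $2$, which frees up exactly one empty block. The parking condition restricts this empty block to a position strictly after the merged block, giving $n-p$ admissible placements; hence each such permutation contributes $n-p+1$ parking functions.

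Assembling these counts, $J_n$ contributes a single parking function and each of the $p$ permutations with ascent at position $p$ contributes $n-p+1$, so that
$$\mathrm{pf}_n(123,231)=1+\sum_{p=1}^{n-1}p\,(n-p+1).$$
A direct evaluation of this sum gives $1+\tfrac{(n-1)n(n+4)}{6}$, which I would then rewrite as $\binom{n+1}{3}+\binom{n}{2}+1$. I expect the main obstacle to be the bookkeeping in the first step: verifying that exactly $p$ admissible permutations have their ascent at position $p$ requires pinning down precisely which contiguous value-intervals may occupy the second decreasing run without reintroducing a $231$ pattern, and the merged-block count hinges on checking that the lone empty block must sit strictly after the size-$2$ block in order to preserve the Dyck (parking) property.
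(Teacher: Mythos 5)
Your proposal is correct and follows essentially the same route as the paper's proof: the same characterization of $\mathcal{S}_n(123,231)$ as $J_a \ominus (J_b \oplus J_{n-a-b})$ (a decreasing run, a single ascent whose bottom is $1$, then a second decreasing run), and the same dichotomy of either splitting the ascent pair across blocks (all singletons) or merging it into a block of size $2$ with the empty block forced strictly later. The only difference is bookkeeping at the end: the paper counts the merged case in one stroke as $\binom{n+1}{3}$ by choosing the two run endpoints and the empty-block position, whereas you sum $p(n-p+1)$ over ascent positions $p$ and simplify algebraically, which is equivalent.
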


\begin{proof}
Suppose $\pi \in \mathcal{S}_n(123,231)$.  Either $\pi_1=n$ or $\pi_i=n$ whre $i \geq 2$.  In the first case $\pi=1 \ominus \pi^{\prime}$ where $\pi^{\prime} \in \mathcal{S}_{n-1}(123,231)$.  In the second case, since $\pi$ avoids 231, all digits before $n$ must be smaller than all digits after $n$, and since $\pi$ avoids 123, all digits before $n$ must appear in decreasing order.  However, since $\pi$ avoids 123, this also means that all digits after $\pi_{i-1}=1$ must also appear in decreasing order, and thus $\pi = J_{i-1} \oplus J_{n-i+1}$.  Putting these two cases together, we see that any permutation that avoids both 123 and 231 has the form $J_a \ominus (J_b \oplus J_{n-a-b})$ where $a,b \geq 0$.  If $b=0$, there is 1 possible permutation, i.e. $J_n$.  If $b>0$, then there are $\binom{n}{2}$ ways to choose the positions that begin and end the $J_b$ subpermutation, for a total of $\binom{n}{2}+1$ such permutations.

Now we consider how many parking functions correspond to these permutations.  When $b=0$, we have the decreasing permutation which must be written on the Dyck path $(NE)^n$, so there is only one possible parking function in this case.

If $b>0$ only ascent in the permutation is between the last digit of $J_b$ and the first digit of $J_{n-a-b}$.  Either all digits are written in blocks of size 1, or the digits of this ascent appear in one block of size 2, and an empty block appears later.  If all blocks are of size 1, we need only know $a$ and $b$ to uniquely determine the corresponding parking function.  There are $\binom{n}{2}$ choices for $a$ and $b$ in this situation, since choosing the position where the $J_a$ subsequence ends and then choosing the position where the $J_b$ subsequence ends uniquely determines their lengths.

In the situation where there is a block of size 2, we must still determine the location for the end of the $J_a$ subsequence and the position for the end of the $J_b$ subsequence, but now we must also determine the position of the empty block.  There are $\binom{n+1}{3}$ ways to choose such a combination.

\end{proof}

\begin{theorem}
\label{T:123_312}
$$\mathrm{pf}_n(123,312)=2\binom{n+1}{3}+1.$$
\end{theorem}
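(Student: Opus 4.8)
The plan is to mirror the structure of the preceding proofs: first determine the possible underlying permutations $\pi \in \mathcal{S}_n(123,312)$, and then count, for each such $\pi$, how many parking functions $p$ satisfy $\phi(p)=\pi$.

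First I would locate the maximum letter $n$ in $\pi$ and write $\pi = A\, n\, B$. Avoiding $123$ forces the prefix $A$ to be decreasing (any ascent within $A$ together with $n$ is a $123$), and avoiding $312$ forces the suffix $B$ to be decreasing ($n$ together with any ascent in $B$ is a $312$). The only remaining occurrences of $312$ must use two letters of $A$ and one letter of $B$, and ruling these out is exactly the requirement that no letter of $B$ lie strictly between two letters of $A$. Since $A\cup B\cup\{n\}=[n]$, this says precisely that $A$ is a block of consecutive integers $\{l,l+1,\dots,h\}$ (written in decreasing order) with $B$ the remaining values in decreasing order. Hence either $\pi=J_n$ (the case $A=\emptyset$) or $\pi$ is one of the $\binom{n}{2}$ permutations indexed by a nonempty interval $[l,h]\subseteq[n-1]$, and there are $n-a$ such intervals of each size $a=h-l+1$.

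The key observation is that each of these permutations has at most one ascent: when $A\neq\emptyset$ the unique ascent occurs between the last letter $l$ of $A$ and $n$. Because descents force block boundaries, ascents may or may not, and each block reads increasingly, every parking function over such a $\pi$ has all blocks of size $1$ except possibly a single block $\{l,n\}$ straddling the ascent. This splits the count into two cases: all singleton blocks, which forces the path $(NE)^n$ and so yields exactly one parking function, or one size-$2$ block together with exactly one empty block whose location must be chosen. The main work is counting the legal positions of that empty block.

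For the empty-block count I would use the Dyck condition that the first $j$ columns contain at least $j$ north-steps. Listing the $n-1$ nonempty blocks in order (sizes $1,\dots,1,2,1,\dots,1$ with the size-$2$ block in position $a=|A|$), the partial sums have slack $0$ before the size-$2$ block and slack $1$ after it; inserting the empty block immediately after the $s$-th nonempty block is legal exactly when that slack is at least $1$, i.e. when $s\ge a$, giving $n-a$ legal placements. Thus a permutation with interval-size $a$ contributes $1+(n-a)$ parking functions, and summing over the $n-a$ intervals of each size together with the single function for $J_n$ gives
$$1+\sum_{a=1}^{n-1}(n-a)\bigl(1+(n-a)\bigr)=1+\sum_{j=1}^{n-1}\left(j+j^2\right)=1+\frac{(n-1)n(n+1)}{3}=2\binom{n+1}{3}+1.$$
I expect the main obstacle to be the careful verification of this ballot/Dyck condition — in particular confirming that the empty block must follow, rather than precede, the size-$2$ block — together with arguing cleanly that residual $312$-avoidance forces $A$ to be an interval.
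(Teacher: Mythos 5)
Your proof is correct, and its first half coincides exactly with the paper's: both arguments show that any $\pi\in\mathcal{S}_n(123,312)$ is a decreasing prefix $A$, then $n$, then a decreasing suffix, with the residual $312$-avoidance forcing $A$ to be an interval of consecutive values (the paper phrases this as $\pi=(J_a\oplus J_b)\ominus J_{n-a-b}$), so that $\pi$ has at most one ascent and every associated parking function has all singleton blocks except possibly one size-$2$ block at the ascent paired with one empty block. Where you genuinely diverge is the final count. The paper encodes each parking function by the $n$ digits together with a null character and observes that the whole family is parametrized by a choice of three positions $i<j<k$ from $\{1,\dots,n+1\}$ (with $i$ the ascent bottom, and $j,k$ playing the two roles of end-of-$J_b$ and null character in either order), which produces $2\binom{n+1}{3}$ at once, with no algebra, and explains the closed form combinatorially. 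You instead fix the interval size $a$, prove via the partial-sum (ballot) condition that the empty block has exactly $n-a$ legal positions, and evaluate $1+\sum_{a=1}^{n-1}(n-a)\bigl(n-a+1\bigr)$, finishing with a polynomial identity. Your route is more computational, but it has the virtue of actually verifying the Dyck-path constraint that the empty block must sit strictly after the size-$2$ block; the paper's encoding builds this in implicitly (the null character is only ever placed at or after the ascent) but never checks that this placement is both necessary and sufficient. So your write-up supplies a small verification the paper leaves to the reader, at the cost of trading the paper's clean bijective parametrization for a summation.
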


\begin{proof}
Suppose $\pi \in \mathcal{S}_n(123,312)$.  Since $\pi$ avoids 123, all digits before $n$ appear in decreasing order, and since $\pi$ avoids 312, all digits after $n$ appear in decreasing order.  Further, all digits before $n$ must be consective, since any instance of $\pi_{i+1}<\pi_{i}-1$ would imply that $\pi_i$ and $\pi_{i+1}$ are the first two digits in a 312 pattern.  In summary, $\pi = (J_a \oplus J_b) \ominus J_{n-a-b}$ with $a,b \geq 0$.

If $a=0$, then we have the decreasing permutation which can only be written in one way as a parking function, written on the Dyck path $(NE)^n$.

If $a\neq 0$, the only ascent begins with the last digit of the $J_a$ subsequence.  This means that any corresponding parking function either has all blocks of size 1, or it has one block of size 2 and an empty block later.  Both situations can be encoded by a permutation of the digits $1,\dots ,n$ and a null character such that the digits $1, \dots, n$ avoid 123 and 312.  If the null character appears between the digits of the sole ascent, then we write a parking function with only blocks of size 1.  If the null character appears later, then the digits of the sole ascent appear in a block of size 2, and the null character represents the location of the empty block

Consider the set $\{1,2,\dots , n+1\}$.  For any choice of 3 digits from this set with $i<j<k$, we can make two parking functions.  In both cases, $i$ is the location of the last digit of the $J_a$ subsequence.  $j$ and $k$ are the locations of the last digit of the $J_b$ subsequence and the null character, which may appear in either order.
\end{proof}

\begin{theorem}
\label{T:123_132}
$$\mathrm{pf}_n(123,132)=\sum_{k=2}^{n+1}b(n,k)$$
where
$$b(n,k)=\begin{cases}
0&k<2,\\
1&k=2 \text{ and } n\in \{1,2\},\\
2&n=3 \text{ and } k=2,\\
2b(n-1,k-1)+\sum_{j=k-1}^{n-1}b(n-2,j)&\text{otherwise}.
\end{cases}$$
\end{theorem}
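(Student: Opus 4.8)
The plan is to set up a recursive structure for the parking functions counted by $\mathrm{pf}_n(123,132)$ refined by a second statistic so that the stated recurrence for $b(n,k)$ emerges naturally. First I would characterize the permutations $\pi \in \mathcal{S}_n(123,132)$. Exactly as in the proof of Theorem~\ref{T:123_132_213}, avoiding $123$ and $132$ forces $n$ to sit in position $1$ or $2$; more usefully, iterating this argument shows that $\pi$ decomposes as $\pi = 1 \ominus \pi'$ or $\pi = 12 \ominus \pi'$ is \emph{not} the right decomposition here because we have dropped the $213$ constraint, so blocks may now be larger than size $2$. The correct observation is that avoiding $123$ and $132$ means the largest element $n$ has at most one element to its right, and everything before $n$ lies above everything after it; recursing on the prefix gives a layered ``decreasing staircase of increasing runs'' structure. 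I would make this precise and then read off, for each such permutation, which labeled Dyck paths (equivalently, block arrangements) are legal.

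The heart of the argument is choosing the right refining statistic $k$ so that the three-part case split in the definition of $b(n,k)$ is visible. Given the target recurrence
$$
b(n,k)=2b(n-1,k-1)+\sum_{j=k-1}^{n-1}b(n-2,j),
$$
I expect $k$ to track something like the length of the longest initial run of size-one blocks (or the position of the first nontrivial block), shifted so that the base offsets $k=2$ align with the singleton and two-element cases. The term $2b(n-1,k-1)$ should come from two ways of attaching a new smallest/largest element that decrease $n$ by one and $k$ by one (for instance, placing the new element in its own block versus merging it, in the two admissible ways the $123,132$-avoidance permits), while the convolution-style sum $\sum_{j=k-1}^{n-1} b(n-2,j)$ should arise from the one configuration that consumes two new elements at once (the size-two block / empty-block pairing familiar from Theorems~\ref{T:foura}--\ref{T:fourc}), with the inner index $j$ ranging over all admissible values of the statistic on the smaller object. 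I would verify the base cases $b(1,2)=b(2,2)=1$ and $b(3,2)=2$ directly against the sequence $1,3,8,24,75,243$ by listing the small parking functions.

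The main obstacle will be pinning down the statistic $k$ so that \emph{both} the doubling term and the summation term have clean bijective descriptions simultaneously; it is easy to find a recursion that produces the doubling term and a separate one that produces the convolution, but reconciling them under a single refinement is the delicate point. My strategy would be to first establish the permutation decomposition, then define $k$ as the natural grading coming from that decomposition, set up the bijection for the ``remove one element'' operation (yielding $2b(n-1,k-1)$), and finally handle the ``remove a size-two block together with its matching empty block'' operation, checking that the empty block's placement is exactly what forces the free summation index $j$ to run from $k-1$ to $n-1$. Since the theorem only asserts equality with the recursively defined $b(n,k)$ (and the OEIS match A000958 is flagged as conjectural), I would not attempt a closed form; establishing the recurrence and the base cases suffices to prove the stated identity.
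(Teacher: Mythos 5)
Your proposal has the right outer shell (recurse on how the parking function begins, with a size-two first block and its matched empty block producing the convolution term), but it is missing the one idea that makes the argument work: the correct refining statistic. The paper defines $k$ as the number of \emph{active sites}: each block of size 2 is paired with its corresponding empty block as parentheses (via the Dyck path), and a position is active exactly when it is not strictly inside such a matched pair; equivalently, the active sites are the places an empty block could legally go after prepending a new block of size 2. The candidates you propose --- the length of the initial run of singleton blocks, or the position of the first nontrivial block --- provably cannot work. At $n=3$ the eight parking functions are $\{2,3\}\vert\{1\}\vert\emptyset$, $\{2,3\}\vert\emptyset\vert\{1\}$, $\{3\}\vert\{1,2\}\vert\emptyset$, $\{2\}\vert\{1,3\}\vert\emptyset$, and four with all singleton blocks. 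The first two have identical singleton-prefix data (empty prefix, first nontrivial block in position 1), so any statistic of your type yields the distribution $(2,2,4)$ up to shift, whereas the correct refinement is $(1,3,4)$ (active-site counts $2$, $3$, $4$ respectively): those two parking functions are distinguished precisely by whether the block $\{1\}$ lies inside or outside the matched pair, which is what the active-site definition detects and no prefix statistic can. Relatedly, your reading of the doubling term is misattributed: in the paper, the factor $2$ in $2b(n-1,k-1)$ comes from the two possible singleton first blocks $\{n\}$ and $\{n-1\}$ (each removal reduces the size by one and kills exactly one active site); ``own block versus merged into a size-two block'' conflates this with the $\{n-1,n\}$ case, which is the summation term.

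Two further genuine errors. First, your structural claim about $\mathcal{S}_n(123,132)$ (``the largest element has at most one element to its right'') is false: $45321$ avoids both patterns and has three entries after the $5$. The correct statement is that all entries before $n$ exceed all entries after $n$ and the prefix is decreasing, which forces $\pi_1 \in \{n-1,n\}$ and hence first block $\{n\}$, $\{n-1\}$, or $\{n-1,n\}$. Second, had you carried out the base-case check you promise, it would not have confirmed the statement: the true count is $b(3,2)=1$, not $2$ ($\{2,3\}\vert\{1\}\vert\emptyset$ is the only size-3 parking function with two active sites). Indeed the value $2$ in the theorem as printed appears to be an error in the paper itself --- its own table of $b(n,k)$ values lists $b(3,2)=1$, and the row sum $1+3+4=8=\mathrm{pf}_3(123,132)$ requires it, whereas $2+3+4=9$ contradicts the enumeration. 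So the verification step, done honestly, would have flagged a discrepancy to be repaired rather than a base case to be accepted.
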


\begin{proof}
Suppose $\pi\in \mathcal{S}_n(123, 132)$.  Since $\pi$ avoids 132, every digit before $n$ is larger than every digit after $n$.  Further, since $\pi$ avoids 123, any digits before $n$ appear in decreasing order.  Thus, either $\pi_1=n$ or $\pi_1=n-1$.  As a result, any parking function avoiding 123 and 132 has as its first block $\{n\}$, $\{n-1\}$ or $\{n-1,n\}$.

In the first case, removing $\{n\}$ produces a pattern-avoiding parking function of size $n-1$.

In the second case, removing $\{n-1\}$ and replacing $n$ with $n-1$ produces a parking function of size $n-1$.

In the final case, the initial block of size 2 implies that there must be an empty block later in the parking function.  In fact, we may consider the up-step in the corresponding Dyck path representation that has label $n-1$ and find its non-crossing matched east step to locate the empty block.  Removing the initial block of size 2 and the corresponding empty block produces a pattern-avoiding parking function of size $n-2$.  To reverse the process, we only need to know the parking function of size $n-2$ and the location of the empty block.  To this end, given a pattern-avoiding parking function, consider the action of adding a new block of size 2 to the beginning.  We call a location where an empty block could be legally placed in this larger parking function an \emph{active site}.  In a parking function of size $n$ with all blocks of size 1, there are $n+1$ active sites -- the beginning, the end, and between any pair of blocks.  However, if there are already some blocks of size 2 and 0, each block of size 2 has a corresponding block of size 0, and they pair as parentheses, with a block of size 2 corresponding to an open parenthesis and a block of size 0 corresponding to a closed parenthesis.  In this situation, positions inside a matched pair of blocks are not active sites.  Every parking function, thus, has at a minimum 2 active sites (at the beginning and the end) and a maximum of $n+1$ active sites (in the case where all blocks are of size 1).

Let $b(n,k)$ be the number of parking functions of size $n$ with exactly $k$ active sites.  Then, $\mathrm{pf}_n(123,132)=\sum_{k=2}^{n+1}b(n,k)$, as desired.

The one parking function of size 1 has 2 active sites. The parking function $\{1,2\}\vert\emptyset$ has 2 active sites, while the other parking functions of size 2 have 3 active sites, which addresses our base cases.

More generally, we consider the effect of a new first block on active sites.  Any parking function beginning with $\{n\}$ or with $\{n-1\}$ has one more active site than the size $n-1$ parking function obtained by removing the first block.  The behavior of a parking function beginning with $\{n-1,n\}$ is more complex.  Given a parking function of size $n-2$ with $k$ active sites, we can obtain $k$ different parking functions by putting $\{n-1,n\}$ on the front and $\emptyset$ in an active site.  When the $\emptyset$ goes in the first available active site, there are $k+1$ active sites in the new larger parking function; when it goes in the second site, there are $k$ active sites, and so on, until when $\emptyset$ goes in the last possible location, there are 2 active sites in the new larger parking function.

Altogether, this analysis tells us that for $n \geq 3$, $$b(n,k)=2b(n-1,k-1)+\sum_{j=k-1}^{n-1}b(n-2,j).$$
The first term covers the two cases where the parking function begins with a block of size 1.  The latter sum runs over all scenarios for beginning with $\{n-1,n\}$ and inserting $\emptyset$ into an active site in a parking function of size $n-2$.
\end{proof}

Although the recurrence in Theorem \ref{T:123_132} is more complicated than many of the other results we have presented, it has nice structure.  Initial values in the triangle $b(n,k)$ are given in Table \ref{tab:123_132}.  Although this triangle appears to be new to the literature, some interesting patterns appear.  For example, $b(n,n+1)=2^{n-1}$.  We can see this readily from the recurrence, which simplifies to
$$b(n,n+1)=2b(n-1,n)+\sum_{j=n}^{n-1}b(n-2,j)=2b(n-1,n)$$ in this case.  This accounts for the parking functions where every digit is in its own block, and therefore matches the total number of $\{123,132\}-$avoiding permutations of size $n$.  However, we are interested in the row sums.  It turns out that $b(n+2,2) = 2b(n+1,1)+\sum_{j=1}^{n+1}b(n,j) = \sum_{j=1}^{n+1}b(n,j)$, and so $b(n+2,2)=\mathrm{pf}_n(123,132)$.  Interestingly, these values appear to match the values of OEIS sequence A000958 which counts ordered rooted trees with n edges having root of odd degree.  It remains open to find a proof showing these sequences agree for arbitrarily large $n$.

\begin{table}[hbt]
    \centering
    \begin{tabular}{|c||c|c|c|c|c|c|c|c|c|c|}
    \hline
    $n\backslash k$&2&3&4&5&6&7&8&9&10&11\\
    \hline
    1&1&&&&&&&&&\\
    \hline
    2&1&2&&&&&&&&\\
    \hline
    3&1&3&4&&&&&&&\\
    \hline
    4&3&5&8&8&&&&&&\\
    \hline
    5&8&14&17&20&16&&&&&\\
    \hline
    6&24&40&49&50&48&32&&&&\\
    \hline
    7&75&123&147&151&136&112&64&&&\\
    \hline
    8&243&393&465&473&432&352&256&128&&\\
    \hline
    9&808&1294&1519&1540&1409&1176&880&576&256&\\
    \hline
    10&2742&4358&5087&5144&4721&3986&3088&2144&1280& 512\\
    \hline
    \end{tabular}
    \caption{Values of $b(n,k)$ for small $n$ and $k$}
    \label{tab:123_132}
\end{table}

\begin{theorem}
\label{T:123_213}
$$\mathrm{pf}_n(123,213)=C_{n+1}-C_n$$
\end{theorem}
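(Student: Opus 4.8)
My plan is to first characterize the permutations in $\mathcal{S}_n(123,213)$ and then count the parking functions associated with each. For the characterization, I would argue much as in the proof of Theorem \ref{T:123_132_213}: since $\pi$ avoids both $123$ and $213$, the largest entry $n$ must occur in position $1$ or position $2$ (otherwise the two entries preceding $n$, together with $n$, form either a $123$ or a $213$ pattern). This gives a recursive decomposition $\pi = 1 \ominus \pi'$ or $\pi = 12 \ominus \pi'$ on a smaller avoider, and I expect the family of such permutations to be exactly the sequences that break up into consecutive ascending runs of length $1$ or $2$, each run placed below all earlier runs. Concretely, $\pi$ is a skew sum of blocks each equal to $1$ or to $12$. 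The key structural consequence I want is that \emph{no two consecutive ascents} occur, so every block of any associated parking function has size at most $2$, exactly as observed for $\mathcal{S}_n(123,132,213)$.

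Since the answer matches $\mathrm{pf}_n(213,231,312)=C_{n+1}-C_n$ from Theorem \ref{T:132_213_312}, the cleanest route is to produce a bijection between the $\{123,213\}$-avoiding parking functions and the $\{213,231,312\}$-avoiding ones, rather than enumerate from scratch. The plan is to set up a refinement by the number of ascents, define $a'(n,k)$ to be the number of $\{123,213\}$-avoiding parking functions of size $n$ whose underlying permutation has exactly $k-1$ ascents, and show that $a'(n,k)$ satisfies the same Catalan-triangle recurrence $a'(n,k)=a'(n-1,k)+a'(n,k-1)$ with boundary values $a'(n,1)=1$ and $a'(n,n)=C_n$ that I can establish as in the proof of Theorem \ref{T:132_213_312}. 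The boundary cases are immediate: the unique permutation with no ascents is $J_n$, giving one parking function, and the all-ascent case forces $\pi = I_n$ with $C_n$ labelings. Summing the row, and accounting for the missing final diagonal entry $C_n$ exactly as before, yields $C_{n+1}-C_n$.

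For the recurrence itself I would mimic the two-case deletion argument in Theorem \ref{T:132_213_312}: look at whether the last block of the parking function is a singleton or empty. In the singleton case, deleting the last block and normalizing the labels should drop $n$ by one while fixing the ascent count, contributing $a'(n-1,k)$. In the empty-block case, I would locate a distinguished north-step in the Dyck path (here tied to the top of the final ascent) and perform a surgery moving a short $E^j N$ subpath to an $N E^j$ subpath immediately before a terminal east-step, reducing the number of ascents by one and contributing $a'(n,k-1)$; the trailing empty block is what guarantees the moved path stays weakly above $y=x$. The main obstacle I anticipate is getting this surgery to respect the specific skew-sum structure of $\{123,213\}$-avoiders (where ascents come in isolated size-$\le 2$ blocks) rather than the $\{213,231,312\}$ structure, so I would need to verify carefully that the image of the map again avoids $123$ and $213$ and that the two operations are mutually inverse on their respective cases. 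If the surgery does not transfer cleanly, the fallback is a direct summation: enumerate parking functions by the positions of the size-$2$ blocks and their matched empty blocks, leading to a double sum in Catalan numbers that a CAS can collapse to $C_{n+1}-C_n$, paralleling the closed-form computation in Theorem \ref{T:132_213_321}.
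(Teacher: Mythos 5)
Your proposal has two fatal gaps. First, the structural characterization is wrong: when $\pi_2=n$ you cannot conclude $\pi_1=n-1$, because that step in the proof of Theorem \ref{T:123_132_213} relies on $132$-avoidance, which is not available here. For example, $2431$ avoids both $123$ and $213$, has its maximum in position $2$, and has $\pi_1=2$; it is not a skew sum of blocks $1$ and $12$. The correct structure (which the paper derives) is that $\pi$ is a skew sum of blocks of the form $1\oplus J_k$, so $\left|\mathcal{S}_n(123,213)\right|=2^{n-1}$ (one permutation per composition of $n$), not a Fibonacci number. Any count built on your characterization, including your fallback summation over size-$2$ blocks, undercounts.

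Second, and more fundamentally, the ascent-refined Catalan-triangle argument cannot transfer. Your boundary condition $a'(n,n)=C_n$ is false, since $I_n$ contains $123$ for $n\geq 3$; in fact $a'(n,k)=0$ whenever $k-1>\lfloor n/2\rfloor$, because each ascent of a $\{123,213\}$-avoider consumes two positions. Concretely, at $n=3$ the distribution of $\{123,213\}$-avoiding parking functions by number of ascents $0,1,2$ is $(1,8,0)$, while for $\{213,231,312\}$ it is $(1,3,5)$: the rows both sum to $9$ but differ entrywise, so no ascent-preserving bijection, and no recurrence $a'(n,k)=a'(n-1,k)+a'(n,k-1)$ with your boundary values, can exist. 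This is precisely why the paper does not argue this way: its proof encodes each parking function as $n$ dots with legally nested parentheses (recording, for each interval $1\oplus J_k$, whether its ascent occupies two singleton blocks or one block of size $2$ paired with a later empty block), derives the recurrence $f(n)=\sum_{i=2}^{n}\sum_{j=0}^{n-i}\sum_{k=2}^{i}f(i-k)f(n-i-j)+\sum_{j=0}^{n-1}f(n-1-j)$, and then verifies $f(n)=C_{n+1}-C_n$ by computer-assisted induction. Indeed, the paper explicitly lists finding a bijective proof of this theorem as an open problem, so the route you propose would be a new result rather than a reconstruction---but as outlined it does not work.
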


\begin{proof}
Suppose $\pi \in \mathcal{S}_n(123,213)$.  Since $\pi$ avoids 213, every digit before 1 must be larger than every digit after 1, and since $\pi$ avoids 123 all digits after 1 appear in decreasing order.  In other words, any permutation avoiding both of these patterns is a skew sum of permutations of the form $1 \oplus J_k$ with $k \geq 0$.  We will call these $1 \oplus J_k$ subpermutations the \emph{intervals} of the permutation.  Notice that any interval of size 1 is involved in no ascents.  However, an interval of size 2 or more has one ascent.  The digits of such an ascent may appear in separate blocks, or they may be in the same block, with an empty block later in the parking function.

We may encode such parking functions with sequences of dots and legally-arranged parentheses in the following way:  Draw $n$ dots, representing the $n$ digits of the $\{123,213\}$-avoiding permutation in the order they appear.  Before the first dot of each interval, place a left parenthesis.  If each number in that interval appears in a separate block, place a right parenthesis after the first dot.  If the first two numbers in that interval appear in a single block, there is an empty block later in the parking function.  Place a right parenthesis between the two dots representing the numbers that are on either side of that empty block.

This representation uniquely encodes the parking functions in question because the number of left parentheses gives the number of intervals and distances between successive left parentheses tell the sizes of the intervals.  The dots and the left parentheses uniquely identify the underlying permutation for the parking function.  The right parentheses then uniquely identify the location of any empty blocks, and their corresponding left parentheses tell which values appear in blocks of size 2.

Now, we give a recurrence that counts the dot and parenthesis arrangements.  As base cases, there is 1 way to arrange 0 dots, and there is 1 way to arrange 1 dot with a pair of parentheses as described; namely, $\left(\cdot\right)$. More generally, let $f(n)$ be the number of such arrangements with $n$ dots.  In the first case, we have only 1 dot inside the first pair of parentheses and in the second case we have 2 or more dots inside the first pair of parentheses.

In the first case, there may be $0 \leq j \leq n-1$ dots before the next left parenthesis (or the end of the arrangement).  This leaves $n-1-j$ remaining dots to arrange with parentheses and can be done in $f(n-1-j)$ ways.  Summing over all relevant values of $j$ gives $$\sum_{j=0}^{n-1}f(n-1-j)$$ ways to begin with one point inside the first pair of parentheses.  This corresponds to parking functions where every digit in the first interval appears in its own block of the parking function.

In the second case, there are $i \geq 2$ dots inside the first pair of parentheses.  There may still be $j \geq 0$ dots after the first pair of parentheses and before the next non-nested left parentheses.  There are $f(n-i-j)$ ways to arrange dots and parentheses after these initial points.  While there must be at least $k \geq 2$ dots inside the first parentheses and before the next left parenthesis, the remaining $i-k$ dots may be arranged with nested parentheses.  In this case, after summing over all possible values of $i$, $j$, and $k$, we get
$$\sum_{i=2}^{n}\sum_{j=0}^{n-i}\sum_{k=2}^{i}f(i-k)f(n-i-j)$$ possible arrangements of dots and parentheses where there are at least 2 dots inside the first pair of parentheses. This corresponds to parking functions where the first interval has size 2 or more and the first two digits of the first interval appear in a block of size 2.

Together we have that $f(0)=f(1)=1$ and
$$f(n)=\sum_{i=2}^{n}\sum_{j=0}^{n-i}\sum_{k=2}^{i}f(i-k)f(n-i-j)+\sum_{j=0}^{n-1}f(n-1-j).$$
We wish to show that $$f(n)=\frac{3(2n)!}{(n + 2)!(n - 1)!} = C_{n+1}-C_n.$$  We will prove this with computer-assisted induction.

First, direct computation from the recurrence shows that $f(n)=C_{n+1}-C_n$ for $0 \leq n \leq 10$.  Now, suppose that $f(k)=C_{k+1}-C_k$ for $k<n$ and consider the sum $\sum_{j=0}^{n-1}f(n-1-j)$.  By the induction hypothesis, 
$$\sum_{j=0}^{n-1}f(n-1-j) = \sum_{j=0}^{n-1}\frac{3(2(n-1-j))!}{((n-1-j) + 2)!((n-1-j) - 1)!},$$ and by CAS simplification we get that 
$$\sum_{j=0}^{n-1}f(n-1-j)=\frac{(n + 1)(2n)!}{((n + 1)!)^2}.$$
Also, considering the nested sum and invoking the induction hypothesis gives
$$f(n)=\sum_{i=2}^{n}\sum_{j=0}^{n-i}\sum_{k=2}^{i}f(i-k)f(n-i-j)=\frac{(2n^3 + 4n^2 - 2n - 4)(2n)!}{((n + 2)!)^2}$$ after simplification by CAS.

Together this gives that 
$$f(n)=\frac{(2n^3 + 4n^2 - 2n - 4)(2n)!}{((n + 2)!)^2} + \frac{(n + 1)(2n)!}{((n + 1)!)^2}=\frac{3(2n)!}{(n + 2)!(n - 1)!},$$ which is exactly what we wanted to show.
\end{proof}

Notice that together Theorem \ref{T:123_213} and Theorem \ref{T:132_213_312} provide an example of two different pattern sets of different sizes that yield the same enumerative result.  In particular, the proof of Theorem \ref{T:132_213_312} is bijective, while the proof of Theorem \ref{T:123_213} requires computer-assisted analysis.  It remains open to find a simpler bijective proof of Theorem \ref{T:123_213}.

\begin{theorem}
\label{T:132_231}
$$\mathrm{pf}_n(132,231)=\sum_{k=0}^{n-1}\binom{n-1}{k}C_{n-k}.$$
\end{theorem}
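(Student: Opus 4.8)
The plan is to first describe $\mathcal{S}_n(132,231)$ structurally, and then count the parking functions attached to each such permutation, grouping them by number of descents. For the structural step, I would observe that both $132$ and $231$ have their largest entry in the middle position. Consequently, if the value $n$ sat in any position $j$ with $1 < j < n$, then choosing any $i < j$ and any $k > j$ would yield $\pi_i < n = \pi_j$ and $\pi_k < n = \pi_j$, an occurrence of $132$ (if $\pi_i < \pi_k$) or $231$ (if $\pi_i > \pi_k$). Hence $\pi_1 = n$ or $\pi_n = n$, and deleting $n$ leaves a permutation in $\mathcal{S}_{n-1}(132,231)$; recursing shows that $\pi$ is exactly a \emph{valley permutation}, that is $\pi_1 > \cdots > \pi_{m} < \cdots < \pi_n$ with $\pi_m = 1$. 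Such a permutation is determined by which elements of $\{2,\dots,n\}$ occupy its decreasing prefix, so the valley permutations with exactly $d$ descents (equivalently, decreasing prefix of length $d+1$) number $\binom{n-1}{d}$.

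The heart of the argument is to show that a fixed valley permutation $\pi$ with $d$ descents underlies exactly $C_{n-d}$ parking functions; summing $\binom{n-1}{d}C_{n-d}$ over $0 \le d \le n-1$ then gives the claimed formula. To count parking functions for a fixed $\pi$, I would analyze the placement of east steps in the $n+1$ gaps around the $n$ north steps. A descent between $\pi_i$ and $\pi_{i+1}$ forces at least one east step in that gap, so the $d$ descents of the decreasing prefix each demand an east step; but the requirement that the path never drop below $y=x$ forces each of these gaps to hold \emph{exactly} one east step (and the gap before $\pi_1$ to hold none). Thus the first $d$ entries $\pi_1,\dots,\pi_d$ are rigidly placed as singleton blocks, and after them the path sits on the diagonal at $(d,d)$.

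What remains is a free Dyck path from $(d,d)$ to $(n,n)$ whose $n-d$ north steps carry the labels of the increasing suffix $\pi_{d+1} < \cdots < \pi_n$. Because this suffix is increasing, every grouping of these labels into blocks automatically respects the within-block increasing condition, so each of the $C_{n-d}$ Dyck paths of semilength $n-d$ extends to exactly one valid parking function, and conversely. This establishes the per-permutation count $C_{n-d}$.

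The main obstacle I anticipate is making the last reduction airtight: one must verify that the above-diagonal condition on the full path decouples cleanly into the rigid prefix and an independent sub-Dyck-path on the suffix. A clean way to record this is to substitute out the forced prefix in the gap counts and check that the residual constraints on the suffix gaps $g_1,\dots,g_{n-d}$ are exactly $\sum_{t \le s} g_t \le s$ for every $s$, with total $\sum_t g_t = n-d$, which is the standard block-size description of Dyck paths counted by $C_{n-d}$. Small cases ($n=2$ giving $2+1=3$ and $n=3$ giving $5+4+1=10$) match the table and serve as a useful sanity check on the bookkeeping.
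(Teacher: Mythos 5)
Your proposal is correct and follows essentially the same route as the paper: characterize $\mathcal{S}_n(132,231)$ as valley permutations, count $\binom{n-1}{k}$ of them by the choice of decreasing prefix, and show each underlies exactly $C_{n-k}$ parking functions because the decreasing prefix is forced into singleton blocks while the increasing suffix can sit on any Dyck path of semilength $n-k$. Your gap-count verification that the prefix is rigid and the path returns to the diagonal is a more careful justification of a step the paper simply asserts, but it is the same argument.
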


\begin{proof}
Suppose $\pi \in \mathcal{S}_n(132,231)$.
Since $\pi$ avoids 231, all digits before the 1 are decreasing, and since $\pi$ avoids 132, all digits after the 1 are increasing.  Therefore, the permutation is uniquely determined by choosing the digits that appear before the 1.  If 1 is in position $k+1$, this can be done in $\binom{n-1}{k}$ ways.

Further, considering the parking functions that correspond to such a permutation with 1 in position $k+1$, notice that the digits of the initial decreasing portion of the permutation must all appear in blocks of size 1.  However, 1 and all digits after it are in increasing order and may be written on any of the Dyck paths of size $n-k$.  Therefore, there are $C_{n-k}$ parking functions that correspond to any permutation that avoids 132 and 231 and has 1 in position $k+1$.  Summing over all possible values of $k$ gives the result.

\end{proof}

\begin{theorem}
\label{T:132_213}
$$\mathrm{pf}_n(132,213)=\mathrm{pf}_n(132,312)=\mathrm{pf}_n(213,231)=\mathrm{pf}_n(231,312).$$

Futhermore, $$\mathrm{pf}_n(132,213)=\mathrm{pf}_{n-1}(132,213)+2\sum_{i=1}^{n-1}\mathrm{pf}_i(132,213)\mathrm{pf}_{n-i-1}(132,213).$$
\end{theorem}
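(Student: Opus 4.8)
My plan is to collapse both claims onto a single statistic of the underlying permutation---its descent set---and count parking functions fibre by fibre over it. The enabling observation is a lemma: for a fixed $\pi\in\mathcal{S}_n$, the number $P(\pi)$ of parking functions $p$ with $\phi(p)=\pi$ depends only on $\operatorname{Des}(\pi)$. Indeed, in block notation every descent of $\pi$ forces a block boundary, while at an ascent we may split or not; once we have cut at all descents, each block is increasing and hence admissible, so a parking function over $\pi$ is exactly a choice of which ascents to cut (producing an ordered list of block sizes $s=(s_1,\dots,s_m)$ refining the ascending-run composition of $\pi$) together with an interleaving of empty blocks that keeps the path weakly above the diagonal. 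The diagonal condition $\sum_{j\le i}s_j\ge i$ involves only the sizes, and the attainable size-lists depend only on where the descents sit; thus $P(\pi)$ is a function of $\operatorname{Des}(\pi)$ alone, which I write $P(c)$ for the corresponding composition $c\vdash n$.

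For the four-way equality I would then show that each of the four classes has $2^{n-1}$ members and that $\pi\mapsto\operatorname{Des}(\pi)$ is a bijection from the class onto $2^{[n-1]}$. For $\mathcal{S}_n(132,213)$ (skew sums of increasing runs) and $\mathcal{S}_n(231,312)$ (direct sums of decreasing runs) the descent set is exactly the set of junctions between runs, so this is immediate; for $\mathcal{S}_n(213,231)$ (the prefix-extremal permutations, where each entry is the current minimum or maximum of the unused values) position $i$ is a descent precisely when the $i$-th choice is ``maximum'', giving a bijection with $\{\min,\max\}^{n-1}$; and $\mathcal{S}_n(132,312)$ is the reverse of $\mathcal{S}_n(213,231)$, so the claim transfers through reversal. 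Combined with the lemma, each of the four counts equals $\sum_{c\vdash n}P(c)$, so they agree.

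For the recurrence I would rewrite $\mathrm{pf}_n=\sum_{c\vdash n}P(c)$ by expanding $P(c)=\sum_{s\succeq c}N(s)$, where $N(s)$ is the number of admissible empty-block interleavings of a size-list $s$; since a size-list with $m$ parts coarsens to exactly $2^{m-1}$ compositions, interchanging the sums gives $\mathrm{pf}_n=\sum_{s}2^{m(s)-1}N(s)$. Reading $N(s)$ as the number of Dyck paths of semilength $n$ whose nonempty columns have sizes $s$, and noting that the number of nonempty columns equals the number of peaks, this is exactly $\mathrm{pf}_n=\sum_{P}2^{\operatorname{peaks}(P)-1}$ over Dyck paths $P$ of semilength $n$. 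The stated recurrence then falls out of the first-return factorization $P=U\,Q\,D\,R$ with $Q,R$ of semilengths $i-1$ and $n-i$: writing $2^{\operatorname{peaks}(P)-1}=2^{\operatorname{peaks}(UQD)-1}\cdot 2^{\operatorname{peaks}(R)}$ and summing independently, the inner factor contributes $\mathrm{pf}_{i-1}$ (an empty $Q$ makes $UQD$ a single peak, matching $\mathrm{pf}_0=1$, while a nonempty $Q$ keeps its peaks under elevation) and the outer factor contributes $2\,\mathrm{pf}_{n-i}$ when $R\neq\emptyset$ and $1$ when $R=\emptyset$. Separating the empty-$R$ term ($i=n$) isolates $\mathrm{pf}_{n-1}$, and the remaining terms give $2\sum_{i=1}^{n-1}\mathrm{pf}_{i-1}\mathrm{pf}_{n-i}$, which after reindexing and using the symmetry of the convolution is the claimed $\mathrm{pf}_{n-1}+2\sum_{i=1}^{n-1}\mathrm{pf}_i\,\mathrm{pf}_{n-1-i}$.

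The main obstacle I anticipate is the peak bookkeeping in the first-return step: it is precisely the asymmetry between an empty and a nonempty inner path $Q$ (one creating a peak, the other not) that breaks the symmetry of the convolution and produces the lopsided $\mathrm{pf}_{n-1}+2\sum$ shape rather than a plain self-convolution, so this case split must be handled carefully. The supporting identities---that the admissible size-lists and their interleavings depend only on $\operatorname{Des}(\pi)$, that a size-list with $m$ parts coarsens in $2^{m-1}$ ways, and that peaks count nonempty columns---are routine but need to be pinned down precisely, as does the injectivity of $\pi\mapsto\operatorname{Des}(\pi)$ on the two prefix-extremal classes.
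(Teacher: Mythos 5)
Your proposal is correct and takes essentially the same approach as the paper: the four-way equality rests on the observation that the number of parking functions over a fixed permutation depends only on its descent set together with the bijection of each avoidance class onto $2^{[n-1]}$, and the recurrence comes from weighting each Dyck path by $2$ to the power of its number of $EN$-corners (your $2^{\mathrm{peaks}-1}$) and applying the first-return decomposition, which is precisely the paper's count of two-colored corners. The only difference is presentational: you reach the weighted-path sum by interchanging a sum over descent compositions and refinements, whereas the paper identifies the corner-colorings of a path directly with the pattern-avoiding parking functions on that path.
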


\begin{proof}
Suppose $\pi \in \mathcal{S}_n(132, 213)$.  Since $\pi$ avoids 132, all digits before $n$ are larger than all digits after $n$, and since $\pi$ avoids 213, all digits before $n$ appear in increasing order.  In other words, $\pi = I_a \ominus \pi^{\prime}$ where $\pi^{\prime} \in \mathcal{S}_{n-a}(132, 213)$.  This implies that $\pi$ is a skew-sum of increasing permutations and is uniquely described by the locations of its ascents.  Since each pair of adjacent digits in the permutation either forms an ascent or a descent, there are $2^{n-1}$ such permutations.  The same is true for the other three pattern pairs -- permutations avoiding them are uniquely described in terms of the location of their ascents.

Now, we consider how many members of $\mathcal{S}_n(132,213)$ can be drawn on a particular Dyck path.  While the labels along a particular vertical line segment must appear in increasing order, we have two choices between consecutive vertical segments: the last label on the first segment either starts an ascent or a descent.  In other words, the number of permutations drawn on a particular path corresponds to the number of two-colorings of the (EN) corners, and summing over all Dyck paths with two-colored (EN) corners gives the total number of pattern-avoiding parking functions.  Let $a_n$ be the number of such two colored paths.  Then we have $a_0=a_1=1$ and more generally, if the first return to $y=x$ is at $(n,n)$ there are $a_{n-1}$ options, while if the path returns to $y=x$ at $(i,i)$, then there are $a_i$ ways to write the initial path segment, $a_{n-i-1}$ ways to write the ending path segment and 2 choices for the color of the (EN) corner at $(i,i)$.  Summing over all possible values for $i$ gives the recurrence.
\end{proof}

The recurrence and initial conditions in Theorem \ref{T:132_213} match the recurrence for OEIS A001003, which is known as the Super-Catalan numbers.

Before we prove our next two results, we consider the following lemma about unlabeled Dyck paths with specific restrictions.

\begin{lemma}
\label{L:specfic_descent}
Let $h_{n,m}$ be the number of Dyck paths of semilength $n$ where the $m$th north step is immediately followed by an east step and let $C_n=\dfrac{\binom{2n}{n}}{n+1}$ be the $n$th Catalan number.  Then

$$h_{n,m}=\begin{cases}
0&m<1 \text{ or } m>n,\\
C_{n-1}&m=1,\\
C_{n}&m=n,\\
\displaystyle{\sum_{i=1}^{m-1}C_{i-1}h_{n-i,m-i}+\sum_{i=m}^nh_{i-1,m-1}C_{n-i}}&\text{ otherwise.}
\end{cases}.$$

\end{lemma}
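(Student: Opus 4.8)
The plan is to prove the recurrence by the standard first-return decomposition of Dyck paths, tracking which sub-path contains the $m$th north step. The boundary cases are immediate. If $m<1$ or $m>n$ there is no $m$th north step, so $h_{n,m}=0$. If $m=1$, then since every Dyck path begins with a north step, forcing that step to be immediately followed by an east step means the path starts $NE$ and the remainder is an arbitrary Dyck path of semilength $n-1$, giving $C_{n-1}$. If $m=n$, the last north step is followed by an east step in every Dyck path: the path ends at $(n,n)$, all $n$ north steps have already been used so only east steps remain after the last $N$, and a path ending with a north step would pass through $(n,n-1)$, which lies below $y=x$. Hence $h_{n,n}=C_n$.

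For the main case $2\le m\le n-1$, I would write each Dyck path $D$ of semilength $n$ uniquely as $D=N\,D_1\,E\,D_2$, where the displayed $E$ is the step of first return to the diagonal $y=x$ at a point $(i,i)$, so that $D_1$ is a Dyck path of semilength $i-1$ and $D_2$ is a Dyck path of semilength $n-i$. Under this decomposition the north steps of $D$ are ordered as follows: the initial $N$ is the $1$st, the $i-1$ north steps of $D_1$ are the $2$nd through $i$th, and the $n-i$ north steps of $D_2$ are the $(i+1)$st through $n$th. Since $m\ge 2$, the $m$th north step of $D$ is never the initial $N$, so it lies in exactly one of $D_1$ or $D_2$, and these two possibilities partition the paths counted by $h_{n,m}$.

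If the $m$th north step lies in $D_2$, then $i<m$, i.e.\ $1\le i\le m-1$; it is the $(m-i)$th north step of $D_2$, so whether it is immediately followed by an east step is an internal condition on $D_2$, counted by $h_{n-i,m-i}$, while $D_1$ is unconstrained and contributes $C_{i-1}$. Summing gives $\sum_{i=1}^{m-1}C_{i-1}h_{n-i,m-i}$. If instead the $m$th north step lies in $D_1$, then $i\ge m$, i.e.\ $m\le i\le n$; it is the $(m-1)$st north step of $D_1$, counted internally by $h_{i-1,m-1}$, while $D_2$ is unconstrained and contributes $C_{n-i}$. Summing gives $\sum_{i=m}^{n}h_{i-1,m-1}C_{n-i}$. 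Adding the two disjoint cases yields the claimed recurrence.

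The step needing the most care, and the main obstacle, is verifying that ``the $m$th north step is immediately followed by an east step'' really is internal to $D_1$ (resp.\ $D_2$) even when that step is the \emph{last} north step of the sub-path, so that no correction term is needed at the junction with the first-return $E$. The key observation is that a nonempty Dyck path always ends in an east step; hence the last north step of $D_1$ is followed by an east step belonging to $D_1$ rather than by the first-return $E$, and likewise the last north step of $D_2$ is followed by an east step of $D_2$. This is exactly what lets $h_{n-i,m-i}$ and $h_{i-1,m-1}$ record the condition faithfully, including their own boundary evaluations such as $h_{i-1,i-1}=C_{i-1}$ when the $m$th north step is the final north step of $D_1$.
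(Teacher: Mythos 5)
Your proof is correct and follows essentially the same argument as the paper: boundary cases handled directly, then a first-return decomposition $D = N\,D_1\,E\,D_2$ at $(i,i)$ with the two sums arising from whether the $m$th north step falls in $D_2$ (when $i \le m-1$) or in $D_1$ (when $i \ge m$). Your extra verification that the ``followed by an east step'' condition is internal to the sub-paths (since a nonempty Dyck path never ends in a north step) is a point the paper leaves implicit, but it does not change the approach.
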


\begin{proof}
The initial cases of this recurrence are easy to verify.  By definition, $1 \leq m \leq n$.  If $m=1$, then any Dyck path counted by $h_{n,m}$ begins with NE, and it may be followed by any of the $C_{n-1}$ Dyck paths of semilength $n-1$.  On the other hand if $m=n$, we know that the last north step in any Dyck path must be followed by an east step, and so $h_{n,m}$ counts all Dyck paths of semilength $n$.

For the general case where $1<m<n$, we decompose a Dyck path counted by $h_{n,m}$ according to its first return to the line $y=x$.  Let this first return be at the point $(i,i)$ where $1 \leq i \leq n$.  We consider two cases:  $i<m$ and $i \geq m$.

In the first case, we may use any of the $C_{i-1}$ Catalan paths between the first N and the first E that returns to $y=x$.  Now, we need the $(m-i)$th north step in the remainder of the path to be followed by an east step, so we may fill in the path after the first return in one of $h_{n-i,m-i}$ ways.

In the second case, any of the $C_{n-i}$ Catalan paths of semilength $n$ may appear after the first return to the line $y=x$.  For the initial part of the path, we have a north step, followed by a path of semilength $i-1$, followed by an east step.  If the $m$th north step of the entire path is followed by an east step, then the $(m-1)$st north step of the path between the initial N and the first E to return to $y=x$ must be followed by an east step.  There are $h_{i-1,m-1}$ ways to complete this portion of the path.

Summing over all relevant values of $i$ in each case gives the recurrence. 

\end{proof}

Although the recurrence of Lemma \ref{L:specfic_descent} gives an efficient way to compute $h_{n,m}$ for any value of $n$ and $m$, it remains open to find a non-recursive formula for it.  Interestingly, we conjecture that the triangle of numbers $h_{n,m}$ matches the values in entry A028364 of the OEIS.  It also remains an open problem to show that $h_{n,m}$ agrees with this triangle for arbitrarily large $n$ and $m$.  We are now ready for our next two enumerative results.  Each produces a sequence new to the OEIS.

\begin{theorem}
\label{T:132_321}
$$\mathrm{pf}_n(132,321)=C_n+\sum_{m=1}^{n-1}(n-m)\cdot h_{n,m}.$$
\end{theorem}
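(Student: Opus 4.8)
The plan is to sort the parking functions by their underlying permutation $\phi(p)$ and to group the permutations of $\mathcal{S}_n(132,321)$ by the location of their descents. The engine of the argument is a structural description of $\mathcal{S}_n(132,321)$: I claim that apart from $I_n$, every permutation in this class has exactly one descent, and that for each $m$ with $1 \le m \le n-1$ there are exactly $n-m$ such permutations whose single descent is at position $m$.

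To prove the structural claim I would locate the largest entry, as in the earlier theorems of this section. If $\pi_j = n$, then avoidance of $132$ forces every entry before position $j$ to exceed every entry after it, and avoidance of $321$ (using $n$ as the top of a decreasing triple) forces the entries after position $j$ to be increasing. If in addition $j < n$, then any descent among the entries before $n$ would, together with any smaller entry after $n$, create a $321$ pattern; hence the prefix must also increase, so $\pi = I_j \ominus I_{n-j}$, which has a single descent at position $j$. If instead $j = n$, I peel $n$ off the end and recurse, which preserves the descent structure. This establishes the single-descent claim; refining it, a single-descent avoider with descent at $m$ is determined by its descent top $\pi_m = v$, which may be any value in $\{m+1, \dots, n\}$ (the first block is then forced to be the consecutive run $\{v-m+1, \dots, v\}$, with the remaining entries appearing in increasing order). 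This yields exactly $n-m$ permutations for each $m$.

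The second ingredient is that a permutation with a single descent at position $m$ admits exactly $h_{n,m}$ parking functions, \emph{independently of which such permutation it is}. Here I would use the observation recurring throughout this section that in any parking function the labels of each block (vertical run of north steps) must increase, so every descent of $\phi(p)$ forces a block boundary while every ascent leaves the choice of a boundary free. For a permutation with its unique descent at position $m$, the only forced boundary is immediately after the $m$th north step, i.e.\ the $m$th north step must be followed by an east step; every other adjacency is an ascent and imposes no constraint. Consequently the valid parking functions with a given such underlying permutation correspond bijectively to Dyck paths of semilength $n$ whose $m$th north step is immediately followed by an east step, the labeling being uniquely determined by reading $\phi(p)$ into the blocks (each block automatically increasing). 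By Lemma~\ref{L:specfic_descent} there are $h_{n,m}$ of these.

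Putting the pieces together: the increasing permutation $I_n$ contributes $C_n$ parking functions, and for each $m$ the $n-m$ single-descent permutations each contribute $h_{n,m}$, giving $C_n + \sum_{m=1}^{n-1}(n-m)\,h_{n,m}$. The main obstacle is the structural step: one must show not only that every non-identity avoider has a single descent but that the count $n-m$ is exact, and in particular that a naive peeling argument correctly excludes the permutations in which a descent in the prefix would combine with the increasing suffix to reintroduce a forbidden $321$ (this is precisely what rules out, for instance, $3241$ when $n=4$). Once the class is pinned down, the parking-function count per permutation is a clean application of the block/Dyck-path dictionary together with Lemma~\ref{L:specfic_descent}.
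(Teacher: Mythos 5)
Your proposal is correct and takes essentially the same approach as the paper: group the pattern-avoiding permutations by the position $m$ of their unique descent, observe there are $n-m$ of them for each $m$, and apply Lemma \ref{L:specfic_descent} to show each admits exactly $h_{n,m}$ parking functions. The only cosmetic difference is that you establish the structure of $\mathcal{S}_n(132,321)$ directly from the position of the largest entry, whereas the paper deduces it by reversal symmetry from its earlier analysis of $\mathcal{S}_n(123,231)$ in Theorem \ref{T:123_231}.
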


\begin{proof}
Suppose $\pi \in \mathcal{S}_n(132, 321)$.  These permutations are the reversals of the permutations in $\mathcal{S}_n(123, 231)$, which we analyzed in the proof of Theorem \ref{T:123_231}, and so by symmetry, either $\pi = I_n$, $\pi = I_m \ominus I_{n-m}$, or $\pi=(I_m \ominus I_\ell) \oplus I_{n-m-\ell}$.  Further, if we allow $\ell=n-m$, the second case is a special case of the third case, so we will consider them together.

There are $C_n$ parking functions corresponding to the increasing permutation $I_n$.  Now, for parking functions whose associated permutations are of the form $(I_m \ominus I_\ell) \oplus I_{n-m-\ell}$ where $1 \leq \ell \leq n-m$, notice that there is a descent immediately after the $m$th digit, and it is the only descent.   There are exactly $h_{n,m}$ Dyck paths where the $m$th north step is immediately followed by an east step, in order to accommodate this descent.  After the descent, we have $n-m$ choices for the remaining digits -- all of which are in increasing order, based on how many are smaller than the initial $m$ digits versus how many are larger.  Summing over all possible values of $m$ gives the result.
\end{proof}

\begin{theorem}
\label{T:213_321}
$$\mathrm{pf}_n(213,321)=C_n+\sum_{m=1}^{n-1}m\cdot h_{n,m}.$$
\end{theorem}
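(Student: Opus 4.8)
The plan is to mirror the proof of Theorem~\ref{T:132_321}, replacing the reversal symmetry used there with complementation. First I would characterize $\mathcal{S}_n(213,321)$. Taking the complement of a pattern sends $213 \mapsto 231$ and $321 \mapsto 123$, so $\pi \mapsto \pi^c$ is a bijection between $\mathcal{S}_n(213,321)$ and $\mathcal{S}_n(123,231)$, the set analyzed in the proof of Theorem~\ref{T:123_231}. The permutations counted there have the form $J_a \ominus (J_b \oplus J_{n-a-b})$ with $a,b \ge 0$, and since $\mathrm{comp}(\alpha \ominus \beta) = \mathrm{comp}(\alpha) \oplus \mathrm{comp}(\beta)$ and $\mathrm{comp}(J_k)=I_k$, I would conclude that every $\pi \in \mathcal{S}_n(213,321)$ has the form $I_a \oplus (I_b \ominus I_{n-a-b})$. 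Concretely this means $\pi = I_n$, or else $\pi$ has exactly one descent, located immediately after the copy of $n$ sitting at position $m = a+b$, with $\pi$ increasing on either side of that descent. (One could instead derive this directly by placing $n$ at some position $p$: avoiding $213$ forces the prefix to be increasing, avoiding $321$ forces the suffix to be increasing, and a short case check rules out the remaining $213$ patterns across $n$.)

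Next I would count permutations by descent position. The increasing permutation $I_n$ is handled separately. For a fixed descent position $m$ with $1 \le m \le n-1$, a permutation $I_a \oplus (I_b \ominus I_{n-a-b})$ has a genuine descent precisely when $b \ge 1$ and $n-a-b \ge 1$; writing $a = m-b$ leaves $b \in \{1,\dots,m\}$, so there are exactly $m$ such permutations. This is the sole genuine divergence from Theorem~\ref{T:132_321}, where the analogous count was $n-m$.

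Then I would count parking functions for a fixed single-descent permutation. Exactly as in the proof of Theorem~\ref{T:132_321}, a descent at position $m$ forces $\pi_m$ and $\pi_{m+1}$ into different blocks, i.e. the $m$th north step must be immediately followed by an east step; conversely any Dyck path with this property can be labeled by $\pi$, since every block then lies inside an increasing run and so receives increasing labels. Hence the number of parking functions with this underlying permutation is exactly $h_{n,m}$, depending only on the descent position and not on the particular values. Summing over $m$ gives $\sum_{m=1}^{n-1} m\,h_{n,m}$, and adding the $C_n$ parking functions associated with $I_n$ yields the claimed total.

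The step requiring the most care is the structural characterization together with the count of $m$ permutations per descent position: one must confirm that the complement genuinely lands inside the family of Theorem~\ref{T:123_231}, that the image permutations have a single descent, and that the value-configuration does not affect the parking-function count. The parking-function enumeration itself is immediate once Theorem~\ref{T:132_321}'s argument is invoked, since it turns only on the location of the unique forced block boundary. As a safeguard I would check the formula against the tabulated values $1,3,13,60,\dots$; for instance $C_3 + 1\cdot h_{3,1} + 2\cdot h_{3,2} = 5 + 2 + 6 = 13$.
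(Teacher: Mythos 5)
Your proposal is correct and takes essentially the same approach as the paper's proof. The only cosmetic difference is the symmetry used to get the structural characterization: the paper notes these permutations are the reversals of those in $\mathcal{S}_n(123,312)$ (Theorem \ref{T:123_312}) rather than the complements of those in $\mathcal{S}_n(123,231)$, but both symmetries yield the same single-descent form $I_\ell \oplus (I_{m-\ell} \ominus I_{n-m})$, and the subsequent counting ($m$ choices of values for each descent position $m$, $h_{n,m}$ Dyck paths accommodating the descent, plus $C_n$ parking functions for $I_n$) is identical to the paper's.
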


\begin{proof}
Suppose $\pi \in \mathcal{S}_n(213, 321)$.  These permutations are the reversals of the permutations in $\mathcal{S}_n(123, 312)$, which we analyzed in the proof of Theorem \ref{T:123_312}, and so by symmetry,
either $\pi=I_n$, $\pi=I_m \ominus I_{n-m}$, or $\pi=I_{\ell} \oplus (I_{m-\ell} \ominus I_{n-m})$.  Further, if we allow $\ell=0$, the second case is a special case of the third case, so we will consider them together.

There are $C_n$ parking functions corresponding to the increasing permutation $I_n$.  Now, for parking functions whose permutations are of the form $I_{\ell} \oplus (I_{m-\ell} \ominus I_{n-m})$ where $0 \leq \ell \leq m-1$, notice that there is a descent immediately after the $m$th digit, and it is the only descent.   There are exactly $h_{n,m}$ Dyck paths where the $m$th north step is immediately followed by an east step, in order to accommodate this descent.  Before the descent, we have $m$ choices for the remaining digits -- all of which are in increasing order, based on how many are smaller than the final $n-m$ digits versus how many are larger.  Summing over all possible values of $m$ gives the result.
\end{proof}

\begin{theorem}
\label{T:213_312}
$$\mathrm{pf}_n(213,312)=\sum_{k=0}^{n-1}\sum_{i=0}^{k} \binom{n-1}{i}\cdot \dfrac{(k + 1)\binom{2n - 2 - k}{n - 1 - k}}{n}.$$
\end{theorem}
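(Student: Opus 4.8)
The plan is to first pin down the permutations in $\mathcal{S}_n(213,312)$ and then count, for each, the labeled Dyck paths that read off to it. I claim $\pi$ avoids $213$ and $312$ exactly when $\pi$ is \emph{unimodal}, i.e.\ it increases up to its maximum $n$ and then decreases. Indeed, an occurrence of $213$ or of $312$ is precisely a triple of positions $i<j<k$ whose middle entry $\pi_j$ is the smallest of the three, so avoiding both patterns forbids any entry from having larger entries both before and after it; equivalently every entry is a left-to-right maximum or a right-to-left maximum, which is exactly unimodality. If the peak $n$ occupies position $p$, the $p-1$ entries to its left may be any subset of $\{1,\dots,n-1\}$ (written increasingly) with the remainder to its right (written decreasingly), so there are $\binom{n-1}{p-1}$ such permutations, and each has descent set exactly the suffix $\{p,p+1,\dots,n-1\}$.

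Next I would count parking functions by fixing the path rather than the permutation. Reading labels $\pi_1,\dots,\pi_n$ up the north steps of a Dyck path gives a legal parking function iff north steps sharing a vertical line carry increasing labels; hence a fixed path $D$ realizes $\phi(p)=\pi$ for exactly one labeling when every descent position of $\pi$ is a spot where a north step is immediately followed by an east step (a forced break), and for no labeling otherwise. So I sum the other way: for a fixed $D$, let $t(D)$ be the length of the maximal run of consecutive north steps $n-1,n-2,\dots$ each immediately followed by an east step. A unimodal permutation with peak at $p$ is compatible with $D$ precisely when $\{p,\dots,n-1\}$ consists of forced breaks, i.e.\ $p\ge n-t(D)$, and summing $\binom{n-1}{p-1}$ over these $p$ yields $\sum_{i=0}^{t(D)}\binom{n-1}{i}$. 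Therefore
$$\mathrm{pf}_n(213,312)=\sum_{D}\;\sum_{i=0}^{t(D)}\binom{n-1}{i}=\sum_{k=0}^{n-1}\Bigl(\sum_{i=0}^{k}\binom{n-1}{i}\Bigr)\,T(k),$$
where $T(k)$ denotes the number of Dyck paths of semilength $n$ with $t(D)=k$. Since $T(k)$ does not depend on $i$, this is already the claimed double sum, provided $T(k)=\dfrac{(k+1)\binom{2n-2-k}{n-1-k}}{n}$.

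It remains to establish that closed form for $T(k)$, and this is the main obstacle. Rewriting $\binom{2n-2-k}{n-1-k}=\binom{2n-2-k}{n-1}$, the target is the classical ballot number $\frac{k+1}{n}\binom{2n-2-k}{n-1}$, which counts the Dyck paths of semilength $n$ whose final run of east steps (equivalently, whose initial run of north steps) has length exactly $k+1$. The content of the theorem is thus the equidistribution of the forced-break statistic $t(D)$ with this standard last-descent statistic; both sum to $C_n$ over all paths and agree on small cases, so I expect this to hold, but the trailing-peak statistic is not carried to the last-descent statistic by the usual reverse-complement involution, so a direct transfer is needed. I would prove it either by a first-return decomposition, showing that the partial sums $\sum_{j\ge k}T(j)$ (the number of paths whose last $k$ internal north steps are all immediately followed by east steps) satisfy the ballot recurrence with the correct boundary values, or by an explicit bijection relocating the trailing staircase of forced peaks to the front of the path; the resulting closed form can be confirmed by CAS as in our other proofs. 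Everything preceding this identification is bookkeeping, so the crux is precisely the passage from $t(D)$ to a standard ballot statistic.
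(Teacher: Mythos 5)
Your setup coincides with the paper's own argument: you characterize $\mathcal{S}_n(213,312)$ as the unimodal permutations, observe that the one with peak at position $p$ fits on a Dyck path $D$ exactly when north steps $p,\dots,n-1$ are each immediately followed by an east step, and reduce the theorem to
$$\mathrm{pf}_n(213,312)=\sum_{k=0}^{n-1}\Bigl(\sum_{i=0}^{k}\binom{n-1}{i}\Bigr)\,T(k),$$
where $T(k)$ is the number of Dyck paths of semilength $n$ whose trailing-singleton statistic $t(D)$ equals $k$; this is precisely the paper's quantity $d(n,k)$. All of this bookkeeping is correct.

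The genuine gap is that you never prove $T(k)=\frac{(k+1)}{n}\binom{2n-2-k}{n-1-k}$, and that identity is the substantive content of the theorem --- as you yourself concede by calling it ``the main obstacle'' and ``the crux.'' What you offer in its place is agreement on small cases, the observation that both statistics sum to $C_n$, the (correct) warning that reverse-complement does not transport $t(D)$ to the last-descent statistic, and two unexecuted strategies, with the closed form to be ``confirmed by CAS.'' Small-case checks plus a CAS do not establish an identity for all $n$ and $k$. This missing step is where the paper spends nearly all of its effort: after recording the decomposition recurrence $d(n,k)=d(n-1,k-1)+\sum_{i=k+1}^{n-1}C_{n-i-1}\,d(i,k)$, it proves that $d(n,k)$ satisfies the A033184 recurrence, rewritten as $d(n,k)+d(n-1,k-2)=d(n,k-1)$, with boundary values $d(n,0)=C_{n-1}$ and $d(n,n-1)=1$, by an explicit bijection: a path ending in ENE loses its final NE (yielding a path of semilength $n-1$ with $k-2$ trailing singletons), while a path ending in EE loses its final E and has the maximal factor $E^iN^j$ (with $i>0$) ending at its last NN rewritten as $E^iN^{j-1}EN$ when $j\ge 3$, or as $NE^{i+1}N$ when $j=2$, so that exactly one new trailing singleton is created. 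Your first proposed strategy (showing the partial sums $\sum_{j\ge k}T(j)$ satisfy the ballot recurrence) is plausible and close in spirit to this, but until such an argument is actually carried out, what you have is a correct reduction of the theorem to an unproven lemma, not a proof.
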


\begin{proof}
Suppose $\pi \in \mathcal{S}_n(213,312)$.  Then since $\pi$ avoids 213, all digits before $n$ are increasing, and since $\pi$ avoids 312, all digits after $n$ are decreasing.  Since there are $\binom{n-1}{k}$ ways to choose the $k$ digits after $n$, there are $\binom{n-1}{0}+\binom{n-1}{1}+\cdots+\binom{n-1}{n-1}=2^{n-1}$ such permutations.  When we write such a permutation, with $k$ digits after $n$, on a Dyck path, the fact that these digits are in decreasing order forces them to be on single north steps, both preceeded and followed by east steps.

Now consider a Dyck path where the last $k$ north steps are single north steps.  Any permutation with at most $k$ digits after $n$ may be written on this Dyck path, so it is helpful to determine the number $d(n,k)$ of Dyck paths of size $n$ whose final $k$ north steps are single north steps.  We omit counting the initial north step in the path $(NE)^n$ since it is a single step by default.  We have $d(1,0)=d(2,0)=d(2,1)=1$ corresponding to NE, NNEE, and NENE respectively.  Now, consider a Dyck path of the form $(D_1)N(D_2)E$.  If $D_2$ is the empty path, then the number of single north steps at the end of the path increases by 1.  If $D_2$ is non-empty, since the first north step of $D_2$ is preceded by a north step, the single north steps at the end of $D_1$ are irrelevant, and the number of single north steps at the end of $(D_1)N(D_2)E$ is just equal to the single north steps at the end of $D_2$.

We have $$d(n,k)=d(n-1,k-1)+\sum_{i=k+1}^{n-1}C_{n-i-1}d(i,k)$$
where the first term accounts for paths ending in NE and the sum accounts for paths where $D_2$ is of semilength $i$ while $C_n$ denotes the $n$th Catalan number and there are $C_{n-i-1}$ ways to fill in the corresponding path $D_1$.  Computationally, the triangle $d(n,k)$ produces equal values to A033184 for small inputs; however, this particular recurrence for the values appears to be new.  Instead we will show that $d(n,k)$ satisfies the recurrence $d(n,k)=d(n,k-1)-d(n-1,k-2)$ with  boundary conditions that $d(n,0)=C_{n-1}$ and $d(n,n-1)=1$, which is the existing description of A033184.  Also $d(n,k)=0$ if $k<0$ since it is not possible to have a negative number of north steps.

For the boundary conditions, we notice that the unique way to get $n-1$ singleton north steps at the end of a path is to use the path $(NE)^n$.  On the other hand, for a path to have no singleton north steps at the end, the final north step must be the middle term in a NNEE factor.  Replacing this with NE produces a path of length $n-1$ with no restriction on the number of singleton north steps, so there are $C_{n-1}$ such paths.

We may rearrange the recurrence $d(n,k)=d(n,k-1)-d(n-1,k-2)$ as: $$d(n,k)+d(n-1,k-2)=d(n,k-1).$$
We will give a bijection between the paths accounted for on each side of this equation.

Suppose we have a path of semilength $n$ ending with $k-1$ singleton northsteps.  Either the path ends with ENE or it ends with EE.

If the path ends with ENE, then delete the final NE to get a path of semilength $n-1$ with $k-2$ singleton north steps at the end.  This is easily reversible by replacing the NE at the end.

If the path ends with EE, remove the final E.  Locate the last NN factor.  Find the longest consecutive string of Es followed by Ns that ends in this NN factor.  This subpath has the form $E^iN^j$ where $i>0$.  If $j \geq 3$, Replace this path with $E^iN^{j-1}EN$ to incorporate the final E and generate one more singleton north step.  If $j=2$, then this process actually generates more than one additional singleton north step.  To compensate for this, replace $E^iNN$ with $NE^{i+1}N$, which incorporates the final E and puts the first N at the end of a factor of other N steps so that we have only generated one additional singleton.

Since we have showed that lattice paths with $k$ singleton north steps at the end follow the same boundary conditions and recurrence as A033184, it follows that $d(n,k)=\dfrac{(k + 1)\binom{2n - 2 - k}{n - 1 - k}}{n}.$

Since $d(n,k)=\dfrac{(k + 1)\binom{2n - 2 - k}{n - 1 - k}}{n}$, then each of the paths counted by $d(n,k)$ can have $\sum_{i=0}^{k} \binom{n-1}{i}$ pattern avoiding permutations labeling its north steps, which gives the result.

\end{proof}

\begin{theorem}
\label{T:231_321}
$$\mathrm{pf}_n(231,321)=\dfrac{\binom{3n}{n}}{2n+1}.$$
\end{theorem}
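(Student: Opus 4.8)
The plan is to identify these parking functions with ternary trees and reduce the claim to the Fuss--Catalan generating-function identity. I would begin by describing $\mathcal{S}_n(231,321)$. Since $231$ and $321$ are precisely the two length-3 patterns whose minimum occupies the last position, avoiding both is equivalent to requiring that every entry of $\pi$ be preceded by at most one larger entry. Examining the position of the largest entry then gives the recursive shape $\pi = \sigma\, n\, \tau$, where $\sigma$ is a $\{231,321\}$-avoiding arrangement of $\{1,\dots,j\}$, the entries following $n$ are exactly the large values $j+1,\dots,n-1$, and they must appear in increasing order because $\pi$ avoids $321$. (This already recovers $|\mathcal{S}_n(231,321)|=2^{n-1}$.)

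Next I would translate to parking functions. In block notation, a parking function reading to a fixed $\pi$ is obtained by cutting $\pi$ into consecutive increasing blocks---each descent of $\pi$ forces a cut while ascents are optional---and then distributing the resulting blocks among the $n$ columns, with the remaining columns empty, subject to the parking condition $\sum_{i\le k}|B_i|\ge k$. When $\pi$ has a single descent this is exactly the count $h_{n,m}$ of Lemma~\ref{L:specfic_descent}, which is how Theorems~\ref{T:132_321} and~\ref{T:213_321} proceed; but the permutations here typically have several descents, so a direct per-permutation sum would need a multi-descent analogue of that lemma and becomes unwieldy.

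Instead, the cleaner route is to establish the recursion
$$\mathrm{pf}_n(231,321)=\sum_{a+b+c=n-1}\mathrm{pf}_a(231,321)\,\mathrm{pf}_b(231,321)\,\mathrm{pf}_c(231,321),$$
equivalently that $F(x)=\sum_{n\ge 0}\mathrm{pf}_n(231,321)\,x^n$ (with $\mathrm{pf}_0=1$) satisfies $F=1+xF^3$. To see the triple product, I would fix a distinguished element (the root, contributing the factor $x$) and show that the remainder of any nonempty such parking function splits into an ordered triple of smaller $\{231,321\}$-avoiding parking functions according to three mutually exclusive and exhaustive attachment modes: one piece built from the larger labels placed before the root in their own columns, one piece built from the smaller labels continuing after it, and one piece nested via a block of size two paired with a later empty block. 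The three modes are visible already at $n=2$, where $\{2\}\vert\{1\}$, $\{1\}\vert\{2\}$, and $\{1,2\}\vert\emptyset$ are exactly the three ternary children of a single root.

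Finally, since $F=1+xF^3$ is the generating function for ternary trees, its coefficients are the Fuss--Catalan numbers $\frac{1}{2n+1}\binom{3n}{n}$ (sequence A001764), which yields the theorem. The hard part will be verifying that the three-mode decomposition is genuinely a bijection: I must check that each mode preserves both $\{231,321\}$-avoidance of the reading word and the Dyck/parking condition, that the nested empty-block mode pairs up correctly in both directions, and that the three cases partition all nonempty parking functions without overlap.
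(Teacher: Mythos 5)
Your frame is sound: the structural description of $\mathcal{S}_n(231,321)$ is correct, the target functional equation $F=1+xF^3$ does characterize A001764, and establishing the stated convolution recursion would prove the theorem. But the entire mathematical content of the proof lies in the ternary decomposition, and you have not defined it --- you have only asserted that three ``attachment modes'' exist, and your one-sentence description of them does not withstand scrutiny. You never say what the root is, and both natural candidates fail as described. If the root is the label $1$, then at most one letter of the reading word can precede it (two earlier letters would both be larger and would form a $231$ or $321$ with the $1$), so the child ``built from the larger labels placed before the root'' has size at most one and cannot contribute a factor of $F$. If the root is $n$ (which is what your own decomposition $\pi=\sigma\,n\,\tau$ suggests), then no larger labels exist at all, and the letters after $n$ are forced to read in increasing order, so the ``after'' child is not an arbitrary smaller $\{231,321\}$-avoiding parking function: there are only $C_m$ parking functions of size $m$ with increasing reading word, not $\mathrm{pf}_m(231,321)$ of them, and the extra multiplicity lives in how those letters interleave with empty columns, which your modes do not track. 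Your third mode is tied to ``a block of size two paired with a later empty block,'' which cannot account for, say, $\{1,2,3\}\vert\emptyset\vert\emptyset$. Finally, the modes do not even match your own $n=2$ examples: in $\{1\}\vert\{2\}$ the non-root label is either a larger label after the root (if the root is $1$) or a smaller label before the root (if the root is $2$); neither is ``smaller labels continuing after it.'' Checking $n=2$ is no evidence in any case, since any rule that places a single element into one of three slots works there; the interaction between labels and the Dyck path only begins at $n\geq 3$.

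For contrast, the paper proves the theorem by an explicit bijection with non-crossing trees on $n+1$ vertices, and its key step is exactly the fact your sketch is missing: on a fixed Dyck path whose nonempty blocks have sizes $j_1,\dots,j_r$ from left to right, the number of valid $\{231,321\}$-avoiding labelings is $\prod_{i=2}^{r}(j_i+1)$ (proved by a placeholder-filling argument), which is then matched with the $d+1$ ways to attach an edge above a subtree whose root has degree $d$. Your generating-function route is genuinely different and is completable, but only with that product formula (or an equivalent) in hand: writing a path as $Nd_1Ed_2$, the initial north step merges with the first block of $d_1$, so the clean recursion is $F=1+xFG$, where $G$ is the weighted sum in which \emph{every} nonempty block of size $j$ carries weight $j+1$; one must then prove separately that $G=F^2$ (the analogue of the Flajolet--Noy butterfly decomposition for non-crossing trees) to conclude $F=1+xF^3$. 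None of these steps appears in your proposal, and they are precisely where the difficulty of the theorem resides.
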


\begin{proof}
It is known that $\dfrac{\binom{3n}{n}}{2n+1}$ counts the number of non-crossing trees on $n+1$ vertices (see \cite{DP93}).  Consider $n+1$ labeled points drawn on the perimeter of a circle.  Designate one to be the root vertex, and label the others from 1 to $n$.  A non-crossing tree is a rooted labeled tree that includes all $n+1$ vertices, and has no two edges that cross as chords of the circle.  We will show that there is a bijection between non-crossing trees on $n+1$ vertices and parking functions of size $n$ avoiding both 231 and 321.  

First, there is a simpler useful bijection between rooted ordered trees on $n+1$ vertices and unlabeled Dyck paths of semilength $n$.  Recursively, we see that both of these sets are counted by the Catalan numbers as follows:  For a rooted ordered tree, identify the rightmost edge $e$ emanating from the root.  Deleting $e$ decomposes the original tree into two subtrees: $T_1$ which is rooted at the original root of $T$ and $T_2$ which is rooted at the other endpoint of $e$.  Since $T_1$ and $T_2$ can be of any size where the number of vertices in $T_1$ and the number of vertices in $T_2$ adds to $n+1$, we see that rooted ordered trees satisfy the Catalan recurrence.  Similarly, we may decompose a Dyck path by identifying the east step that matches with the first north step.  This decomposes the Dyck path into $P=NP_1EP_2$ where $P_1$ and $P_2$ are Dyck paths whose semilengths sum to $n-1$.  For a recursive bijection, in the base case, the one tree on 2 vertices corresponds to the one Dyck path of semilength 1.  If we assume there is a bijection $f$ from rooted ordered trees on $i+1$ vertices to Dyck paths of semilength $i$ for $i<n$, then we have $f(T)=Nf(T_1)Ef(T_2)$, which captures the Catalan structure of both sets of objects.    The non-recursive interpretation of this bijection is as follows: Begin at the root, visit all edges $e_1, e_2,\dots e_k$ that have an endpoint at the root, then traverse the subtree below $e_1$, then the subtree below $e_2$, etc.  Rather than focusing on vertices, focus on the edges.  The first time an edge is traversed, record a north step.  Each time we start to visit the subtree below an edge, record an east step.  Since each edge is visited exactly twice (once along with all its sibling edges, and once as leading to its subtree), we get an equal number of north and east steps, and each east step is preceeded by the north step for that edge.

Now, we will show that all ways to arrange a rooted ordered tree on a circle as a non-crossing tree correspond to parking functions written on the corresponding Dyck path to that tree.

Notice that if $\pi \in \mathcal{S}_n(231,321)$, then either $\pi_1=1$ or $\pi_2=1$.  This is true recursively as well: once we know the location of the 1, the 2 must be in one of the earliest remaining two positions, and so on.  This is equivalent to the permutation being a direct sum of sub-permutations of the form $(1 \ominus I_k)$ where $k \geq 0$.  

Next, we give a process to fill in the entries of a parking function so that it gives a $\{231, 321\}$-avoiding permutation.  If we know the sizes of the blocks in a parking function, we can fill in the values of the blocks from left to right as follows.  If the first block has size $j$, we know the first $j-1$ entries are $1,2,\dots j-1$, however the last element may be $j$ or it may be larger.  Temporarily fill in this value with an $X$.  Now suppose the first $b$ non-empty blocks have been filled in with the digits $1,\dots, m$ plus the placeholder symbol $X$ and the next block to be filled has size $j$.  There are $j+1$ options for how this block may be filled.  One of the values $v$ of $\{m+1,\dots, m+j\}$ replaces the $X$ in the previous block and then the block consists of $\{m+1,\dots, m+j\} \setminus \{v\}$ followed by $X$, or $X$ remains in its current earlier block, and the new block has the entries $\{m+1,\dots, m+j\}$.  We repeat this process until all blocks have been considered, and we have a parking function filled with the entries $1,\dots, n-1$.  Replace the $X$ (which could be the last entry of any of the non-empty blocks) with $n$ to complete the parking function.

This algorithm for filling in the entries of a parking function can be translated to the placement of a rooted ordered tree as a non-crossing tree on a circle as follows:  First, denote the root vertex on the circle, which is contained in edges $e_1, e_2,\dots, e_k$.  Suppose $e_1, \dots, e_k$ are labeled so that their second endpoints go clockwise around the circle from the root.  Let $\left|T\right|$ denote the number of vertices in tree $T$.  Since we already know the sizes $\left|T_i\right|$ of the $k$ subtrees, we already know that $T_1$ occupies the first $\left|T_1\right|$ vertices counting clockwise from the root, $T_2$ occupies the next $\left|T_2\right|$ vertices, and so on.  The only decision that must be made is which of the first $\left|T_1\right|$ vertices is the second endpoint of $e_1$, which of the next $\left|T_2\right|$ vertices is the second endpoint of $e_2$, and so on.  In general, if $T_i$'s root has degree $d$, then there are $d+1$ options for the second endpoint of $e_i$: this is because $e_i$ could be left of all edges of $T_i$, right of all edges of $T_i$, or could be between two adjacent subtrees descending from $T_i$'s root.

We finally have all the ingredients to biject a non-crossing tree to a $\{231,321\}$-avoiding parking function.  First, consider the underlying rooted ordered tree, and find the corresponding Dyck path.  This Dyck path underlies our parking function and determines block sizes.  Each block of size $k$ corresponds to a collection of $k$ edges sharing the same root.  While the first block (corresponding to the edges descending from the root of the entire tree) must be $1,\dots,j-1$ plus the placeholder character $X$, for any subsequent block that must be populated with all but one of $\{m+1,\dots, m+j\} \cup \{X\}$, we look at the corresponding subtree in the non-crossing tree.  If the edge leading to the root of the subtree is to the right of all of the subtrees (reading their second vertices in clockwise order), leave $X$ in its current position and populate the block with $\{m+1,\dots, m+j\}$.  On the other hand, if the edge leading to the root of the subtree is to the left of $i$ subtrees, then replace $X$ with $m+i$ and populate the new block with $m+1,\dots, m+i-1,m+i+1,\dots, m+j, X$.

This correspondence is reversible.  Given a parking function, we can construct the underlying rooted ordered tree from the Dyck path. The values in a particular block give the endpoints of the edge leading to the relevant subtree.

\end{proof}

As an example of the bijection in the proof of Theorem \ref{T:231_321}, consider the tree in Figure \ref{F:tree_bij}, where the vertex labeled 0 represents the root.  The underlying tree itself is in bijection with the Dyck path $N^2ENEN^3E^2NE^3$, and therefore corresponds to a parking function with blocks of size 2, 1, 3, 0, 1, 0, and 0.

We now consider the orientation of the tree edges to fill the blocks one at a time as follows:

\begin{itemize}
    \item The first non-empty block has size 2, so it begins with entries $\{1,X\}$.
    \item So far, we have used the digit $m=1$.  Edge $a$ is the only edge descending from the root that has a nontrivial subtree below it.  Edge $a$ has its second vertex at vertex 2.  Reading clockwise, edge $a$ is to the left of one subtree, and therefore $m+1=2$ replaces $X$ in the first block.  The next non-empty block has size 1 and it is populated with $\{X\}$.  Our current partial parking function is $\{1,2\} \vert \{X\}$.
    \item So far we have used the digits 1 and $m=2$.  Edge $c$ is the only edge descending from vertex 2.  Reading clockwise, edge $c$ is to the left of 2 subtrees below vertex 4 (those beginning with edge $e$ and edge $f$).  So, $X$ is replaced by $m+2=4$ in the second block.  The next non-empty block has size 3 and it is populated with $\{3,5,X\}$.  Our current partial parking function is $\{1,2\} \vert \{4\}\vert \{3,5,X\}$.
     \item So far we have used the digits 1, 2, 3, 4 and $m=5$.  Edge $e$ is the only edge descending from vertex 4 with a nontrivial subtree below it.  Reading clockwise, edge $e$ is to the left of no subtrees below vertex 7.  So, $X$ remains in its current location.  The next non-empty block has size 1 and it is populated with $\{6\}$.  Our current partial parking function is $\{1,2\} \vert \{4\}\vert \{3,5,X\}\vert \emptyset \vert \{6\} \vert \emptyset\vert \emptyset$.
     \item We have found all but one digit of the parking function so we replace $X$ with $n=7$ to get the parking function $\{1,2\} \vert \{4\}\vert \{3,5,7\}\vert \emptyset \vert \{6\} \vert \emptyset\vert \emptyset$.

\end{itemize}

\begin{figure}[hbt]
    \centering

\begin{tikzpicture}
        % draw the unit circle

			\node[circle,draw,minimum size=0.2cm] (1) at (90:2cm) {0};
\node[circle,draw,minimum size=0.2cm] (2) at (45:2cm) {1};
\node[circle,draw,minimum size=0.2cm] (3) at (0:2cm) {2};
\node[circle,draw,minimum size=0.2cm] (4) at (315:2cm) {3};
\node[circle,draw,minimum size=0.2cm] (5) at (270:2cm) {4};
\node[circle,draw,minimum size=0.2cm] (6) at (225:2cm) {5};
\node[circle,draw,minimum size=0.2cm] (7) at (180:2cm) {6};
\node[circle,draw,minimum size=0.2cm] (8) at (135:2cm) {7};
\draw (1) -- node[right] {b}  ++(2);
\draw (1) -- node[left] {a}  ++(3);
\draw (3) -- node[right] {c}  ++(5);
\draw (5) -- node[below] {d}  ++(4);
\draw (5) -- node[below] {f}  ++(6);
\draw (5) -- node[right] {e}  ++(8);
\draw (8) -- node[right] {g}  ++(7);
    \end{tikzpicture}
    \caption{A non-crossing tree on 8 vertices}
    \label{F:tree_bij}
\end{figure}

\begin{theorem}
\label{T:312_321}
Let $\mathcal{D}_n$ be the set of all Dyck paths of semilength $n$.  Given $d \in \mathcal{D}_n$, let $w(d)$ be the word of positive integers that consists of the lengths of the maximal consecutive strings of north steps of $d$.  

$$\mathrm{pf}_n(312,321)=\sum_{d \in \mathcal{D}_n} \prod_{i=1}^{\left|w(d)\right|-1} (w(d)_i+1).$$
\end{theorem}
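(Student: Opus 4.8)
The plan is to reduce the sum over Dyck paths to a per-path count, and then to show that for a fixed Dyck path $d$ with maximal north-run lengths $w(d)=(w_1,\dots,w_k)$, the number of $\{312,321\}$-avoiding parking functions drawn on $d$ is exactly $\prod_{i=1}^{k-1}(w_i+1)$; summing over $d\in\mathcal{D}_n$ then yields the formula. First I would recall that a parking function on a fixed Dyck path is obtained by distributing the labels $[n]$ among the maximal north-runs (the blocks), with labels increasing within each run, so that $\phi(p)$ is precisely the concatenation $B_1B_2\cdots B_k$ of $k$ increasing blocks with $|B_i|=w_i$. Thus the per-path count is a purely combinatorial question about filling increasing blocks of prescribed sizes so that their concatenation avoids $312$ and $321$.

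The key structural observation is that a permutation avoids both $312$ and $321$ if and only if no entry has two or more smaller entries to its right, since both forbidden patterns consist of a largest entry followed by two smaller ones in some order. Applied to the block decomposition, where each $B_i$ is increasing, the binding constraint for block $i$ comes from its maximum $m_i:=\max(B_i)$: every other entry of $B_i$ has only larger entries to its right within the block, so the condition for any entry of $B_i$ is implied by the condition for $m_i$. Hence the avoidance of $B_1\cdots B_k$ is equivalent to requiring, for each $i<k$, that at most one element lying in the later blocks $B_{i+1},\dots,B_k$ is smaller than $m_i$.

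I would then count these block-fillings by a left-to-right recursion. For the first block, the condition forces $m_1\in\{w_1,w_1+1\}$: either $B_1=\{1,\dots,w_1\}$ (no smaller element is left behind), or $B_1=\{1,\dots,w_1+1\}\setminus\{t\}$ for some $t\in\{1,\dots,w_1\}$ (exactly one smaller element $t$ is deferred to a later block). This gives exactly $w_1+1$ choices for $B_1$. Crucially, the conditions for $i\ge 2$ refer only to values not in $B_1$ and to their relative order, so after removing $B_1$ we are left with the identical problem on $[n]\setminus B_1$ (order-isomorphic to $[n-w_1]$) with block sizes $(w_2,\dots,w_k)$. Since the last block carries no constraint, the one-block base case contributes a single filling, and the recursion multiplies out to $\prod_{i=1}^{k-1}(w_i+1)$, as desired.

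The main obstacle I anticipate is justifying that this recursion decouples cleanly. I must verify both that avoidance of the full concatenation is equivalent to the conjunction of the first-block condition and the avoidance of the suffix $B_2\cdots B_k$, and that the choice of $B_1$ does not constrain the later blocks beyond removing its values. Once the reduction to ``the maximum of each block has at most one smaller element to its right'' is established and the first-block count of $w_1+1$ is pinned down, the remaining bookkeeping (confirming $m_1\le w_1+1$ and that the leftover value set is order-isomorphic to $[n-w_1]$) is routine.
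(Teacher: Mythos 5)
Your proposal is correct, and it arrives at the same per-path product count as the paper, but by a genuinely different justification. The paper works right-to-left: it fills the last block with the largest values, carries a single floating placeholder $X$ (which eventually becomes the value $1$), and argues that each earlier block of size $j$ admits exactly $j+1$ completions --- either one of that block's $j$ candidate values jumps ahead to replace $X$, or $X$ stays where it is; this recursion is grounded in the observation that $n$ must occupy one of the last two positions, and so on recursively. You instead work left-to-right from the sharper characterization that a permutation avoids both $312$ and $321$ exactly when no entry has two smaller entries to its right; reducing this to the block maxima (for each $i<k$, at most one value in later blocks lies below $\max B_i$) turns the constraint on $B_1$ into a pure set condition --- $B_1=\{1,\dots,w_1\}$ or $B_1=\{1,\dots,w_1+1\}\setminus\{t\}$, giving $w_1+1$ choices --- which decouples from the residual problem by order-isomorphism. (Note that the paper's structural fact is a special case of yours: since every entry is smaller than $n$, your criterion forces $n$ into one of the last two positions.) Your route makes exhaustiveness and independence of the block choices completely transparent, a point the paper's algorithmic description leaves implicit, since it never explicitly verifies that its filling procedure produces every avoiding labeling exactly once; the paper's placeholder construction, on the other hand, deliberately mirrors the filling procedure in its proof of Theorem \ref{T:231_321}, which exhibits the structural kinship between the two results that the paper remarks on.
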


While the statement of the formula in Theorem \ref{T:312_321} is more complicated than our prior results, the structure of these parking functions is remarkably similar to those counted in Theorem \ref{T:231_321}.   

\begin{proof}
Suppose $\pi \in \mathcal{S}_n(312,321)$.  Then either $\pi_{n-1}=n$ or $\pi_n=n$.  This is true recursively as well: once we know the location of the $n$, then $n-1$ must be in one of the last remaining two positions, and so on.    Next, we give a process to fill in the entries of a parking function so that it gives a $\{312, 321\}$-avoiding permutation.  If we know the sizes of the blocks in a parking function, we can fill in the values of the blocks from right to left as follows.  If the last block has size $j$, we know the last $j-1$ entries are $n-(j-2),n-(j-3),\dots n$, however the first element of this block may be $n-(j-1)$ or it may be smaller.  Temporarily fill in this value with an $X$.  Now suppose the last $b$ non-empty blocks have been filled in with the digits $n-m,\dots, n$ plus the placeholder symbol $X$ and the next block to be filled has size $j$.  There are $j+1$ options for how this block may be filled.  One of the values $v$ of $\{n-m-j,\dots, n-m-1\}$ replaces the $X$ in the previous block and then the block consists of $\{n-m-j,\dots, n-m-1\} \setminus \{v\}$ followed by $X$, or $X$ remains in its current earlier block, and the new block has the entries $\{n-m-j,\dots, n-m-1\}$.  We repeat this process until all blocks have been considered, and we have a parking function filled with the entries $2,\dots, n$.  Replace the $X$ (which could be the first entry of any of the non-empty blocks) with $1$ to complete the parking function.

While there was one choice for how the parking function was filled when we had only addressed the last block, each successive block of size $j$ has $j+1$ options for how it may be filled, so the total number of parking functions is the product of $w(d)_i+1$ where $w(d)_i$ is the size of the $i$th block and $i$ runs over all blocks except the final block.
\end{proof}

\section{Open Questions and Future Work}

In this paper we have surveyed the combinatorial sequences that arise when avoiding a set of patterns of length 3 in the context of parking functions. We have completely addressed the enumeration of parking functions avoiding any set of 2 or more such patterns.

As a direct result of our arguments, the following problems remain open:

\begin{enumerate}
    \item Show that the values $b(n+2,2)$ in the proof of Theorem \ref{T:123_132} match OEIS A000958 for all $n$.
    \item Find a proof for Theorem \ref{T:123_213} that is purely bijective, rather than requiring computer-assisted induction.
\item Find a non-recursive expression for the values $h_{n,m}$ in Lemma \ref{L:specfic_descent} and prove that these values match OEIS A028364.
\end{enumerate}

In addition, the majority of cases for avoiding a single pattern of length 3 remain open.  Remmel and Qiu's result for $\mathrm{pf}_n(123)$ was the motivation for this paper.  Brute force data for avoiding the other patterns of length 3 appears in Table \ref{tab:one_pattern}.  In general, these sequences appear to be much more challenging to describe than those considered earlier in this paper.  However, intriguingly, the initial data for avoiding 132 or avoiding 231 matches initial terms for OEIS entry A243688, which is described in that database as ``number of Sylvester classes of 1-multiparking functions of length $n$'', based on \cite{NT20}.

\begin{table}[hbt]
    \centering
    \begin{tabular}{|c|c|c|}
    \hline
    Pattern $P$&$\mathrm{pf}_n(P)$, $1 \leq n \leq 6$&OEIS\\
    \hline
\multirow{2}{*}{123}&\multirow{2}{*}{1, 3, 11, 48, 232, 1207}&new\\ &&(Remmel \& Qiu \cite{RQ17})\\
\hline
132&\multirow{2}{*}{1, 3, 13, 69, 417, 2759}&A243688\\
231&&(conjectured)\\
\hline
213&\multirow{2}{*}{1, 3, 14, 81, 533, 3822}&\multirow{2}{*}{new}\\
312&&\\
\hline
321&1, 3, 15, 97, 728, 6024&new \\
\hline
\end{tabular}
    \caption{\textit{Enumeration data for parking functions avoiding one pattern of length 3}}
    \label{tab:one_pattern}
\end{table}

This leads to one more explicit open problem, directly following our work:

\begin{enumerate}
\setcounter{enumi}{3}
    \item Find formulas for $\mathrm{pf}_n(\rho)$ where $\rho \in \mathcal{S}_3 \setminus \{123\}$.
    \end{enumerate}
    
More generally, we know that if there exists an ascent-preserving bijection between $\mathcal{S}_n(\rho_1, \dots, \rho_m)$ and     $\mathcal{S}_n(\tau_1, \dots, \tau_{\ell})$, then $$\mathrm{pf}_n(\rho_1, \dots, \rho_m)=\mathrm{pf}_n(\tau_1, \dots, \tau_{\ell}).$$  Such sets of patterns are said to be \emph{Wilf-equivalent}.  However, while this is a sufficient condition for Wilf equivalence, it is not a necessary condition, since, for example, Theorem \ref{T:132_213_312} and Theorem \ref{T:123_213} yield the same enumeration while the corresponding sets of pattern-avoiding permutations are not in bijection. This leads naturally to:

\begin{enumerate}
\setcounter{enumi}{4}
\item What other conditions imply two sets of patterns are Wilf-equivalent in the context of parking function patterns in this paper?
\end{enumerate}

More general open questions remain, such as avoiding longer patterns, or considering statistics on pattern-avoiding parking functions.  This paper is largely motivated by enumerative combinatorics, while parking functions tend to be algebraically motivated.  It would also be interesting to consider these definitions from an algebraic perspective.

\end{document}